\newcommand{\revised}[1]{{#1}}
\newcommand{\approptoinn}[2]{\mathrel{\vcenter{
  \offinterlineskip\halign{\hfil$##$\cr
    #1\propto\cr\noalign{\kern2pt}#1\sim\cr\noalign{\kern-2pt}}}}}
\newcommand{\appropto}{\mathpalette\approptoinn\relax}
\newcommand\lbr{%
	\ensurestackMath{\stackengine{0pt}{\lfloor}{\lceil}{O}{c}{F}{F}{L}}}
\newcommand\rbr{%
  \ensurestackMath{\stackengine{0pt}{\rfloor}{\rceil}{O}{c}{F}{F}{L}}}
\providecommand{\keywords}[1]{\textbf{\noindent\textit{Keywords---}} #1}
\providecommand{\msc}[1]{\textbf{\noindent\textit{MSC numbers---}} #1}
\newcommand*\textmathversion{\csname textmv@\math@version\endcsname}
\newcommand*\textmv@normal{m}
\newcommand*\textmv@bold{b}
\newtheorem{theorem}{Theorem}[section]
\newtheorem{proposition}[theorem]{Proposition}
\newtheorem{lemma}[theorem]{Lemma}
\newtheorem{claim}[theorem]{Claim}
\theoremstyle{definition}
\newtheorem{definition}[theorem]{Definition}
\newtheorem{assumption}{Assumption}
\newtcbox{\mybox}{colback=blue!5,
	colframe=blue!30!black, center, enhanced, varwidth upper}
\newtcbox{\mymath}[1][]{%
	nobeforeafter, math upper, tcbox raise base,
	enhanced, colframe=blue!30!black,
	colback=blue!5, boxrule=0.5pt, top=1mm,bottom=1mm,
	#1}
\newcommand{\bA}{{A}}
\newcommand{\bP}{{P}}
\newcommand{\bR}{{R}}
\newcommand{\bS}{\bm{S}}
\newcommand{\bX}{X}
\newcommand{\bx}{{x}}
\newcommand{\bb}{{b}}
\newcommand{\cB}{\mathcal{B}}
\newcommand{\cC}{\mathcal{C}}
\newcommand{\cE}{\mathcal{E}}
\newcommand{\cF}{\mathcal{F}}
\newcommand{\cG}{\mathcal{G}}
\newcommand{\cH}{\mathcal{H}}
\newcommand{\cI}{\mathcal{I}}
\newcommand{\cJ}{\mathcal{J}}
\newcommand{\cK}{\mathcal{K}}
\newcommand{\cM}{\mathcal{M}}
\newcommand{\cP}{\mathcal{P}}
\newcommand{\cQ}{\mathcal{Q}}
\newcommand{\cT}{{\mathcal{T}}}
\newcommand{\cU}{\mathcal{U}}
\newcommand{\cV}{\mathcal{V}}
\newcommand{\cX}{\mathcal{X}}
\newcommand{\cY}{\mathcal{Y}}
\newcommand{\cZ}{\mathcal{Z}}
\newcommand{\bbeta}{{\beta}}
\newcommand{\blambda}{{\lambda}}
\newcommand{\bSigma}{{\Sigma}}
\newcommand{\R}{\mathbf{R}}
\newcommand{\RR}{{\mathbf R}}
\newcommand{\sd}{\mathsf{d}}
\newcommand{\sD}{\mathsf{D}}
\newcommand{\sP}{\mathsf{P}}
\newcommand{\Span}{\operatorname{span}}
\newcommand{\rank}{\operatorname{rank}}
\newcommand{\Tr}{\operatorname{tr}}
\newcommand{\tr}{\operatorname{tr}}
\newcommand{\Diag}{\operatorname{Diag}}
\newcommand{\Vect}{\operatorname{vec}}
\newcommand{\dist}{\operatorname{dist}}
\newcommand{\range}{\operatorname{range}}
\newcommand{\EE}{\operatorname{\mathbb E}}
\newcommand{\Var}{\operatorname{Var}}
\newcommand{\Cov}{\operatorname{Cov}}
\newcommand{\PP}{\operatorname{\mathbb P}}
\newcommand{\Supp}{\operatorname{supp}}
\newcommand{\supp}{\operatorname{supp}}
\newcommand{\argmin}{\operatornamewithlimits{argmin}}
\newcommand{\conv}{\operatorname{conv}}
\newcommand{\prox}{\operatorname{prox}}
\DeclarePairedDelimiter{\dotp}{\langle}{\rangle}
\def\shortdisplay{\setlength{\abovedisplayskip}{5pt}%
	\setlength{\belowdisplayskip}{5pt}%
	\setlength{\abovedisplayshortskip}{2pt}%
	\setlength{\belowdisplayshortskip}{2pt}}
\let\oldselectfont\selectfont
\def\selectfont{\oldselectfont\shortdisplay}
\newcommand{\hlS}{\mathsf{\Sigma}} 
\newcommand{\Ms}{M_{\star}} 
\newcommand{\RSE}{\operatorname{RSE}}
\newcommand{\lb}{c_{\textup{\texttt{lb}}}}
\newcommand{\ub}{c_{\textup{\texttt{ub}}}}
\newcommand{\be}{{e}}
\newcommand{\bv}{{v}}
\newcommand{\x}{x}
\newcommand{\Q}{\cQ}
\newcommand{\betas}{{\beta}_\star}
\newcommand{\z}{{z}}
\begin{document}

\emergencystretch 1.5em

\title{The radius of statistical efficiency}
\author{
Joshua Cutler\thanks{Department of Mathematics, U. Washington,
	Seattle, WA 98195; \texttt{jocutler@uw.edu}.} \and Mateo D\'\i az\thanks{Department of Applied Mathematics and Statistics, Johns Hopkins University, Baltimore, MD 21218, USA; \texttt{https://mateodd25.github.io}. Research of D\'iaz was supported by NSF awards CCF-2442615 and DMS-2502377.} \and Dmitriy Drusvyatskiy\thanks{Hal{\i}c{\i}o\u{g}lu Data Science Institute (HDSI),
University of California San Diego,
La Jolla, CA 92093; \texttt{https://sites.google.com/view/dmitriy-drusvyatskiy}. Research of Drusvyatskiy was supported by the NSF DMS-2306322, NSF DMS-2023166, and AFOSR FA9550-24-1-0092 awards.}
}

\date{}

\maketitle

\begin{abstract}
	Classical results in asymptotic statistics show that the Fisher information matrix controls the difficulty of estimating a statistical model from observed data.
	In this work, we introduce a companion measure of robustness of an estimation problem: the radius of statistical efficiency (RSE) is the size of the smallest perturbation to the problem data that renders the Fisher information matrix singular. We compute RSE up to numerical constants for a variety of testbed problems, including principal component analysis, generalized linear models, phase retrieval, bilinear sensing, and matrix completion.
	Interestingly, we observe a precise reciprocal relationship between RSE and the intrinsic complexity/sensitivity of the problem instance,  paralleling the classical Eckart–Young theorem in numerical analysis. To establish our results, we develop theory for spectral functions of measures that extends well-known results from matrix analysis and eigenvalue optimization---a contribution that may be of interest beyond our immediate findings.
\end{abstract}
\keywords{radius theorem, conditioning, statistical complexity, Eckart--Young theorem,  stability, robustness, Fisher information matrix, local minimax lower bounds.}\\

\noindent \msc{90C15, 49K40, 62F12, 90C31.}
\section{Introduction}\label{section:intro}

A central theme in computational mathematics is that the numerical difficulty of solving a given problem is closely linked to both $(i)$ the sensitivity of its solution to perturbations and $(ii)$ the shortest distance  of the problem to an ill-posed instance.  As a rudimentary example, consider solving an $m\times d$ linear system $\bA\bx=\bb$. The celebrated Eckart–Young theorem asserts the equality

\begin{equation}\label{eqn:ey}
	\underbrace{\min_{B\in \R^{m\times d}}\left\{\|A-B\|_F\mid B~\rm{is ~singular}\right\}}_{\clap{\scriptsize{\rm distance to ill-posedness}}} \enspace = \qquad \underbrace{\sigma_{\min}(\bA)}_{\clap{\scriptsize{\rm difficulty/sensitivity}}}.
\end{equation}
Although the proof is elementary, the conclusion is intriguing since it equates two conceptually distinct quantities. Namely, the reciprocal of the minimal singular value $1/\sigma_{\min}(\bA)$ is classically known to control both the numerical difficulty of solving the linear system $\bA\bx=\bb$ and the Lipschitz stability of the solution to perturbations in $\bb$. In contrast, the left side of the equation \eqref{eqn:ey}  is geometric; it measures the smallest perturbation to the data that renders the problem ill-posed.

The {\em exact} equality in \eqref{eqn:ey} is somewhat misleading because it is specific to linear systems. We would expect that for more sophisticated problems, the difficulty/sensitivity of the problem  should be inversely proportional to the distance to ill-posedness. This is indeed the case for a wide class of problems in numerical analysis \cite{demmel1997applied,burgisser2013condition} and optimization  \cite{renegar1995incorporating, renegar1994some, freund1999some, pena2000understanding,dontchev2003radius}, including computing eigenvalues and eigenvectors, finding zeros of polynomials, pole assignment in control systems, conic optimization, nonlinear programming, and variational inequalities. Despite this impressive body of work, this line of research is largely unexplored in statistical contexts. Therefore, here we ask:

\mybox{\it \centering
  Is there a succinct relationship between complexity, sensitivity, and distance to ill-posedness for problems in statistical inference and learning?}

\revised{As stated, this question is somewhat ill-posed. Unlike in the linear system case where there is a clear parameterization of the problem, statistical inference problems take many different forms with varying structure. Instead of pursuing overgeneralized answers, we demonstrate that for a specific class of problems resembling maximum likelihood estimation, a precise relationship can indeed be established.}
The starting point for our development is that the statistical difficulty of estimation is tightly controlled by (quantities akin to) the Fisher information matrix for maximum likelihood estimation. This connection is made precise for example by the Cram\'{e}r-Rao lower bound \cite{cramer1999mathematical, rao1992information} and the local asymptotic minimax theory of H\'{a}jek and Le Cam \cite{le2000asymptotics,van2000asymptotic,duchi2021asymptotic}.
From an optimization viewpoint, the minimal eigenvalue of the Fisher information matrix is closely related to the quadratic growth constant of the objective function modeling the learning problem at hand.
In particular, (near-)singularity of this matrix signifies that the problem is ill-conditioned.
Inspired by this observation,  we introduce a new measure of robustness associated to an estimation task: the {\em radius of statistical efficiency (RSE)} is the size of the smallest perturbation to the problem data that renders the Fisher information matrix singular. Thus large RSE signifies existence of a large neighborhood around the problem instance comprised only of well-posed problems. 

 We compute RSE for a variety of testbed problems, including principal component analysis (PCA), generalized linear models, phase retrieval, bilinear sensing, and rank-one matrix completion. In all cases, the RSE exhibits a precise reciprocal relationship with the statistical difficulty of solving the target problem, thereby directly paralleling the Eckart–Young theorem and its numerous extensions in numerical analysis and optimization. Moreover, we provide gradient-based conditions for general estimation problems which ensure validity of such a reciprocal relationship. 
 Slope-based criteria for error bounds, due to Ioffe \cite{ioffe2000metric} and Az\'{e}-Corvellec \cite{aze2004characterizations}, play a key role in this development.

Before delving into the technical details, we illustrate the main thread of our work with two examples---linear regression and PCA---where the conclusions are appealingly simple to state.

\paragraph{Linear regression.}
The problem of linear regression is to recover a vector $\betas \in \RR^{d}$ from noisy linear measurements
$$y = \dotp{\bx, \betas} + \varepsilon,$$ where $\bx$ is a random vector in $\R^d$ with probability distribution $\sD_x$ and $\varepsilon$ is zero-mean noise that is independent of $\bx$. The standard approach to this task is to minimize the mean squared error
\begin{equation}
	\label{eq:lr-measurements}
	\min_{\bbeta \in \RR^{d}}\, \mathop{\EE}_{(x,y)\sim\sD} \tfrac{1}{2} \big( \dotp{\bx, \bbeta} - y \big)^{2}.
\end{equation}
Classical results show that the asymptotic performance of estimators for this problem is tightly controlled by $\bSigma^{-1}$, where $\bSigma:=\EE_{\sD_x} [\bx\bx^\top]$ is the second-moment matrix of the population. The closer the matrix $\Sigma$ is to being singular, the more challenging the problem \eqref{eq:lr-measurements} is to solve, requiring a higher number of samples. Seeking to estimate a neighborhood of well-posed problems around $\sD$, the RSE is defined to be the minimal Wasserstein-2 distance from $\sD_x$ to distributions with a singular second-moment matrix. We will see that for linear regression \eqref{eq:lr-measurements}, RSE is simple to compute:

\begin{empheq}[box=\mymath]{equation}
	\RSE(\sD) = \sqrt{\lambda_{\min}(\bSigma)}.\label{eqn:llin_square_bam}
\end{empheq}
That is, the simplest measure of ill-conditioning of the target problem $1/\lambda_{\min}(\bSigma)$ has a geometric interpretation as the reciprocal of the squared distance to the nearest ill-posed problem. The representation of RSE in \eqref{eqn:llin_square_bam} is not specific to linear regression and holds much more generally for (quasi) maximum likelihood estimation \cite{McCullagh1983} with strongly convex cumulant functions.

\paragraph{Principal component analysis (PCA).} Principal component analysis (PCA) seeks to find a $q$-dimensional subspace $\mathcal{V}\subset\R^d$ that captures most of the variance of a centered random vector $x$ drawn from a probability distribution $\sD$. Analytically, this amounts to solving the problem
\begin{equation} \label{eqn:pca_brah0}
	\max_{\bR\in {\rm Gr}(q, d)}\, \mathop{\EE}_{\bx\sim \sD}\| \bR \bx\|^2_2,
\end{equation}
where the Grassmannian manifold ${\rm Gr}(q, d)$ consists of all orthogonal projection matrices $\bR\in \R^{d\times d}$ onto $q$-dimensional subspaces of $\R^d$. The column space of an optimal matrix $R$ is called the {\em top $q$ principal subspace}. Intuitively, the hardness of the problem is governed by the gap between the $q$'th and $(q+1)$'st eigenvalues of $\Sigma:={\EE}_{\sD}{[x x^\top]}$---the smaller the gap, the higher the number of samples required for estimation, and this can indeed be made rigorous. We define the RSE for PCA to be the minimal  Wasserstein-2 distance from $\sD$ to a centered distribution with covariance having equal $q$'th and $(q+1)$'st eigenvalues. In this case, RSE admits the simple form
\begin{empheq}[box=\mymath]{equation}
	\RSE(\sD) = \tfrac{1}{\sqrt{2}}\bigg(\sqrt{\lambda_{q}(\bSigma)}-\sqrt{\lambda_{q+1}(\bSigma)}\bigg).\label{eqn:rse_pca}
\end{empheq}
In particular, the expression \eqref{eqn:rse_pca} endows the gap $\sqrt{\lambda_{q}(\bSigma)}-\sqrt{\lambda_{q+1}(\bSigma)}$---the reciprocal of the problem's complexity---with a geometric meaning as the distance to a nearest ill-conditioned problem instance.
\smallskip

The two examples of linear regression and PCA can be understood within the broader context of stochastic optimization:
\begin{equation}
	\label{eq:main-problem}
	\min_{\bbeta \in \cM}\, f(\bbeta)\quad\text{where}\quad f(\bbeta)=\mathop{\EE}_{\z \sim \sD} \ell(\bbeta; \z) \tag*{$\mathtt{SP}(\sD)$}.
	\noeqref{eq:main-problem}
\end{equation}
Here $\z$ is data drawn from a distribution $\sD$, the function $\ell(\bbeta;\cdot)$ is a loss parameterized by $\bbeta$, and $\cM\subset\R^d$ is a $C^{2}$ embedded submanifold comprised of allowable model parameters. 
For example, \ref{eq:main-problem} may correspond to least-squares regression or maximum likelihood estimation. In both cases, the asymptotic efficiency of estimators for finding the minimizer $\betas$ of \ref{eq:main-problem} is tightly controlled by the following matrix akin to Fisher information:
\begin{equation}
	\label{eq:fisher-information}
	\cI(\betas,\sD)=\sP_{\mathcal{T}}\nabla_{\bbeta\bbeta}^2 \mathcal{L}(\betas, \blambda_{\star}) \sP_{\mathcal{T}}.
\end{equation}
Here $\sP_{\mathcal{T}}$ is the projection onto the tangent space $\cT$ of $\cM$ at $\betas$ and $\mathcal{L}$ is the Lagrangian function for \ref{eq:main-problem} with optimal multipliers $\blambda_{\star}$. For simplicity, we will abuse notation and call $\cI(\betas,\sD)$ the Fisher information matrix. The matrix $\cI(\betas,\sD)$ plays a central role in estimation, as highlighted by the lower bounds of Cram\'{e}r-Rao and H\'{a}jek-Le Cam \cite{le2000asymptotics,van2000asymptotic}, as well as their recent extensions to stochastic optimization of Duchi-Ruan \cite{duchi2021asymptotic}.\footnote{\revised{In general, these lower bounds consider a slightly more complicated matrix (see~\eqref{eq:HLCCovariance} in Section~\ref{sec:second-moment}), but for the examples considered in this paper it reduces to (a scalar multiple of) the pseudoinverse of the matrix $\cI(\betas,\sD)$.}} Moreover, the minimal nonzero eigenvalue of $\cI(\betas,\sD)$ controls both the coefficient of quadratic growth of the objective function in \ref{eq:main-problem} and the Lipschitz stability of the solution under linear perturbations.
In particular, the problem \ref{eq:main-problem} becomes ill-conditioned when the minimal eigenvalue of $\cI(\betas,\sD)\vert_\cT$ is small.

In summary, the matrix $\cI(\betas,\sD)$ tightly controls the difficulty of solving \ref{eq:main-problem}. Consequently,  it is appealing to consider as a measure of robustness of \ref{eq:main-problem} the size of the smallest perturbation to the data $\sD$, say in the Wasserstein-2 distance $W_2(\cdot,\cdot)$, that renders the Fisher information matrix singular. This is the viewpoint we explore in the current work, and we call this quantity the {\em radius of statistical efficiency (RSE)}. 
We choose to use the Wasserstein-2 distance in the definition of RSE, as opposed to other metrics on measures, because it leads to concise and easily interpretable estimates in examples. \revised{Indeed, the core machinery underlying our arguments can be summarized via the following observation. For most problems we study, the set of distributions with singular Fisher information matrix takes the form $\cP_{\cC} = \{\mu \mid \Sigma_{\mu} \in \cC\}$, where $ \Sigma_{\mu}$ denotes the second-moment matrix of the distribution $\mu$ and $\cC$ is some given set. For each example, we establish a reduction akin to
$$W_{2}(\sD, \cP_{\cC}) = W_{2}(\Sigma_{\sD}, \cC),$$
where $W_{2}$ on the right-hand side denotes the Bures-Wasserstein distance between positive semidefinite matrices. Thus, computing the Wasserstein-2 distance reduces to an easier finite-dimensional problem; we defer a detailed discussion to Appendix~\ref{sec:wasserstein_geometry_review}.}
In the rest of the paper, we leverage this strategy to study basic properties of RSE and compute it up to numerical constants for a variety of testbed problems: generalized linear models, PCA, phase retrieval, blind deconvolution, and matrix completion. In all cases, the RSE translates intuitive measures of ``well-posedness'' into quantified neighborhoods of stable problems. Moreover, in all cases we show a reciprocal relationship between the minimal eigenvalue of $\cI(\betas,\sD)$ and RSE, thereby paralleling the Eckart–Young theorem in numerical analysis and optimization.

\paragraph{Outline} The remainder of this section covers related work and basic notation we use. Section~\ref{sec:second-moment} formally describes the radius of statistical efficiency and establishes a few general-purpose results relating RSE to the minimal eigenvalue of the Fisher information matrix. The subsequent sections characterize RSE for several problems: PCA (Section~\ref{section:pca}), generalized linear models and (quasi) maximum likelihood estimation (Section~\ref{sec:glm}), rank-one matrix regression (Section~\ref{sec:matrix-sensing}). Section~\ref{sec:conclusions} closes the paper with conclusions. All proofs appear in the appendix in order to streamline the reading.

\subsection{Related work} Our work is closely related to a number of topics in statistics and computational mathematics.

\paragraph{Local minimax lower bounds in estimation.} There is a rich literature on minimax lower bounds
in statistical estimation problems; we refer the reader to \cite[Chapter 15]{wainwright2019high} for a detailed treatment.
Typical results of this type lower-bound the performance of any statistical procedure on a worst-case
instance for that procedure. Minimax lower bounds can be quite loose as they do not consider
the complexity of the particular problem that one is trying to solve but rather that of an entire
problem class to which it belongs. More precise local minimax lower bounds, as developed by H\'{a}jek
and Le Cam \cite{le2000asymptotics,van2000asymptotic}, provide much finer problem-specific guarantees.
Simply put, a single object akin to the Fisher information matrix controls both the difficulty of estimation from finitely many samples and the stability of the model parameters to perturbation of the density.
Extensions of this theory to stochastic nonlinear programming were developed by Duchi and Ruan \cite{duchi2021asymptotic} and extended to decision-dependent problems in \cite{cutler2022stochastic} and
to a wider class of (partly smooth) problems in \cite{davis2023asymptotic}. In particular, it is known that popular algorithms such as sample average approximation \cite{van2000asymptotic} and stochastic gradient descent with iterate averaging \cite{polyak92,davis2023asymptotic} match asymptotic local minimax lower bounds, and are therefore asymptotically optimal. Weaker ad hoc results, based on the Cram\'er-Rao lower bound, have been established for a handful of problems \cite{niazadeh2011achievability, liu2001cramer, smith2005covariance, balan2016reconstruction, bandeira2014saving}. \revised{Our results are different in nature---we do not derive local minimax lower bounds. Instead, we establish a relationship between the quantities appearing in these lower bounds and the distance to a set of ill-conditioned instances.}

\paragraph{Radius theorems.}
Classical numerical analysis literature emphasizes the close interplay between efficiency of numerical algorithms and their sensitivity to perturbation. Namely, problems with solutions that change rapidly due to small perturbation are typically difficult to solve. Examples of this phenomenon abound in computational mathematics; e.g., eigenvalue problems and polynomial equations \cite{demmel1997applied,burgisser2013condition} and optimization \cite{dontchev2003radius,dontchev2009implicit,renegar1995linear}.
Motivated by this observation, Demmel in \cite{demmel1987condition} introduced a new robustness measure, called the radius of regularity, which measures the size of a neighborhood around a problem instance within which all other problem instances are stable. A larger neighborhood thereby signifies a more robust problem instance. Estimates on the radius of regularity have now been computed for a wealth of computational problems; e.g., solving polynomial systems \cite{burgisser2011problem, burgisser2013condition}, linear and conic programming \cite{renegar1995incorporating, renegar1994some, freund1999some, pena2000understanding}, and nonlinear optimization \cite{dontchev2003radius}.
The radius of statistical efficiency, introduced here, serves as a direct analogue for statistical estimation.

\paragraph{Conditioning and radius theorems in recovery problems.}
Several condition numbers---controlling the convergence of first-order methods---are closely related to notions of strong identifiability, e.g., the restricted isometry property (RIP), in the context of statistical recovery problems \cite{blumensath2009iterative, chen2015fast, ma2018implicit, charisopoulos2021low, charisopoulos2021composite, diaz2019nonsmooth}. A few works \cite{roulet2020computational, zhang2023new, bastounis2023can} have established connections between these notions of strong identifiability and
a suitably-defined  radius to ill-posed instances. In particular, \cite{roulet2020computational} established a connection between Renegar's conic distance to infeasibility \cite{renegar1995incorporating} and the nullspace property \cite{cohen2009compressed} in compressed sensing. In a similar vein, \cite{zhang2023new} linked the $\ell_{1}$ distance to ill-posed problems and the RIP for generalized rank-one matrix completion. Finally, \cite{bastounis2023can} defined a condition number for the LASSO variable selection problem  via the reciprocal of the distance to ill-posedness, designed an algorithm whose complexity depends solely on this condition number, and proved an impossibility result for instances with infinite condition number. The radius of statistical efficiency, defined in this work, is distinct from and complementary to these metrics.

\paragraph{Error bounds.}
The basic question we explore is the relationship between the minimal eigenvalue of the Fisher information matrix (complexity) and the distance to the set where this eigenvalue is zero (RSE). The theory of error bounds exactly addresses questions of this type: namely, when does a function's value polynomially bound the distance to its set of minimizers? See, for example, the authoritative monographs on the subject \cite[Chapter 8]{cui2021modern} and \cite[Chapter 3]{ioffe2017variational}.
Indeed, Demmel's original work \cite{demmel1987condition} makes heavy use of this interpretation. We explore this path here as well when developing infinitesimal characterizations of RSE in Theorem~\ref{thm:infinitesimal}. The added complication in our development is that we must deal with functions defined over a metric space rather than a Euclidean space, and therefore the techniques we use rely on variational principles (\'{a} la Ekeland) and computations of the slope.

\subsection{Notation}\label{sec:pre}
\paragraph{Linear algebra.} Throughout, we let $\R^d$ denote the standard $d$-dimensional Euclidean space equipped with the dot product $\langle x, y \rangle = x^{\top}y$ and the induced norm $\|x\|_2=\sqrt{\langle x,x\rangle}$ (the $\ell_2$ norm). The unit sphere in $\R^d$ will be denoted by $\mathbb{S}^{d-1}$, while the set of nonnegative vectors will be written as $\R^d_+$. The symbol $\R^{m\times n}$ will denote the Euclidean space of real $m \times n$ matrices, equipped with the trace inner product $\langle X,Y\rangle=\tr(X^\top Y)$ and the induced norm $\|X\|_F = \sqrt{\langle X, X\rangle}$ (the Frobenius norm). The operator norm will be written as $\|\cdot\|_{\rm op}$. The symbol $\otimes$ denotes the Kronecker product. The singular values of a matrix $A\in \R^{m\times n}$ will be arranged in nonincreasing order:
$$\sigma_1(A)\geq \sigma_2(A)\geq \cdots\geq \sigma_{m\wedge n}(A).$$
The space of real symmetric $d\times d$ matrices will be denoted by $\bS^d$ and is equipped with the trace product as well. The cone of real positive semidefinite (PSD) $d\times d$ matrices will be written as $\bS^d_+$.   The eigenvalues of a matrix $A\in \bS^{d}$ will be arranged in nonincreasing order:
$$\lambda_1(A)\geq \lambda_2(A)\geq \cdots\geq \lambda_{d}(A).$$
For any subspace $\cK\subset\R^d$, the symbol $\sP_{\cK}\colon \RR^{d}\rightarrow \cK$ will denote the orthogonal projection onto $\cK$. The compression of a linear operator $A$ on $\R^d$ to $\cK$, denoted  $A\vert_{\cK}\colon\cK\rightarrow\cK$, is the restriction of $\sP_{\cK}A$ to $\cK$.

\paragraph{Probability theory.} We will require some background on the Wasserstein geometry on the space of probability measures on a Euclidean space $(\mathbf{E},\|\cdot\|)$ equipped with its Borel $\sigma$-algebra. In order to streamline the reading, we record here only the most essential notation that we will need. A more detailed review of Wasserstein geometry appears in Section~\ref{sec:wasserstein_geometry_review}. To this end, we let $\cP_p(\mathbf{E})$ be the space of measures $\mu$ on $\mathbf{E}$ with finite $p$'th moment ${\EE}_{\mu}\|x\|^p<\infty$. The subset comprised of measures $\mu\in\cP_p(\mathbf{E})$ that are centered, meaning $\EE_{\mu}[x]=0$, will be written as $\cP_p^{\circ}(\mathbf{E})$. When the space $\mathbf{E}$ is clear from context, we use the shorthand $\cP_p$ and $\cP_p^{\circ}$. A convenient metric on $\cP_p$ is furnished by the Wasserstein-$p$ distance $$W_p(\mu,\nu) =\min_{\pi \in \Pi(\mu,\nu)}\bigg(\mathop{\EE}_{(x,y)\sim \pi}\|x-y\|^p\bigg)^{1/p}$$ (see Section~\ref{sec:wasserstein_geometry_review} for details). The distance function to a set of measures $\cQ\subset\cP_p$ is defined by $W_p(\mu,\cQ)=\inf_{\nu\in \cQ}W_p(\mu,\nu)$. The symbol $\Sigma_{\mu}=\EE_{\mu}[xx^\top]$ will denote the second-moment matrix of any measure $\mu\in \cP_2(\R^d)$. In the rest of the paper, we will use the symbol $\sD$ to denote a distinguished measure associated with the problem of interest, while we use $\mu$ as a placeholder for arbitrary measures.

\section{The distance to ill-conditioned problems}\label{sec:second-moment}

In this section, we formally define the radius of statistical efficiency (RSE) and develop some techniques for computing it. Throughout, we will focus on the stochastic optimization problem
\begin{equation}\label{eq:main-problem2}
	\min_{\beta \in \cM}\, f(\beta)\quad\text{where}\quad f(\beta)=\mathop{\EE}_{z \sim \sD} \ell(\beta; z).\tag*{$\rm{SP}(\sD)$} \noeqref{eq:main-problem2}
\end{equation}
Here the set $\cM$ is a $C^{2}$ embedded submanifold of $\R^d$ and $z$ is drawn from a distribution $\sD\in \cP_{2}(\cZ)$, where  $\cZ$ is a Euclidean space equipped with its Borel $\sigma$-algebra. \revised{We assume that the function $f$ is $C^2$-smooth on $\cM$, i.e., at each point $\beta\in\cM$, there exists a $C^2$-smooth local extension of $f$ (which we shall also denote by $f$) defined on an open neighborhood of $\beta$ in $\R^d$.   }

The difficulty of solving the problem \ref{eq:main-problem2} from finitely many samples $z_1,\ldots, z_n\stackrel{\text{iid}}{\sim} \sD$ is tightly controlled by a matrix akin to Fisher information. This object, which we now describe, plays a central role in our work. Let $\betas$ be a minimizer of \ref{eq:main-problem2} and  define the solution map
\begin{equation*}
	\sigma(v)=\argmin_{\beta\in \cM\cap B_{\varepsilon}(\betas)}\,f(\beta)-\langle v,\beta\rangle
\end{equation*}
for some $\varepsilon>0$. Thus, $\betas\in\sigma(0)$ and the set $\sigma(v)$ is comprised of all local minimizers ($\varepsilon$-close to $\betas$) of the problem obtained from \ref{eq:main-problem2} by subtracting a linear/tilt perturbation $\langle v,\beta\rangle$. Clearly, a desirable property is for $\sigma$ to be single-valued and smooth. With this in mind, we introduce the following notion due to Poliquin and Rockafellar \cite{poliquin1998tilt}. 

\begin{defn}{(Tilt-stable minimizer)}{}
The point $\betas$ is a {\em tilt-stable minimizer} of \ref{eq:main-problem2} if for some $\varepsilon>0$, the map $\sigma(\cdot)$ satisfies $\sigma(0) = \betas$ and is single-valued and $C^1$-smooth on some neighborhood of the origin. Then the {\em regularity modulus} of the problem is defined to be
$${\rm REG}(\sD)=\|\nabla \sigma(0)\|_{\rm op}.$$ 
If  $\betas$ is not tilt-stable, we call $\betas$ {\em unstable} and set ${\rm REG}(\sD)=+\infty$.
\end{defn}

In particular, we will regard ${\rm REG}(\sD)$ as a measure of difficulty of solving the problem \ref{eq:main-problem2}.
If $\cM$ is the whole space $\R^d$, then $\betas$ is a tilt-stable minimizer if and only if the Hessian  $\nabla^2 f(\betas)$ is nonsingular, in which case equality $\nabla \sigma(0)=[\nabla^2 f(\betas)]^{-1}$ holds \cite[Proposition 1.2]{poliquin1998tilt}. In particular, when \ref{eq:main-problem2} corresponds to maximum likelihood estimation, the matrix $\nabla \sigma(0)$ reduces to the inverse of the Fisher information.
More generally, tilt-stability can be characterized either in terms of definiteness of the covariant Hessian or the Hessian of the Lagrangian on the tangent space of $\cM$. Since we will use both of these viewpoints, we review them now; \revised{for further details regarding the following Lagrangian and intrinsic characterizations, we refer the reader to \cite{Lewis2013, poliquin1998tilt, davis2023asymptotic}.} The reader may safely skip this discussion during the first reading since it will not be used until the appendix.

\paragraph{Lagrangian characterization.}
Let $G=0$ be the local defining equations for $\cM$ around $\betas$. That is, $G\colon \R^d\to \R^m$ is a $C^2$-smooth map with surjective Jacobian $\nabla G(\betas)$ and the two sets $\cM$ and $\{\beta\in\R^d \mid G(\beta)=0\}$ coincide near $\betas$. Then $\mathcal{T}:=\ker(\nabla G(\betas))$ is the tangent space of $\cM$ at $\betas$. Define the Lagrangian function $$\mathcal{L}(\beta,\lambda):=f(\beta)+\langle \lambda, G(\beta)\rangle.$$
First order optimality conditions at $\betas$ ensure that there exists a unique vector $\lambda_\star\in \R^m$ satisfying $\nabla_{\beta} \mathcal{L}(\betas,\lambda_\star)=0$. Define the matrix
\begin{equation}\label{eqn:fishinf}
\cI(\betas,\sD):= \sP_{\mathcal{T}}\nabla^2_{\beta\beta}\mathcal{L}(\betas,\lambda_\star)\sP_{\mathcal{T}},
\end{equation}
\revised{where we let $\sP_{\mathcal{T}}$ denote the $d \times d$ matrix representation of the orthogonal projection of $\R^d$ onto the tangent space $\mathcal{T}$ relative to the standard basis.} 
Since $\betas$ is a local minimizer of \ref{eq:main-problem2}, it follows that $\cI(\betas,\sD)$ is positive semidefinite. Conversely:  
\begin{quote}
\centering $\cI(\betas,\sD)$ is positive definite on  $\mathcal{T}$ if and only if $\betas$ is a tilt-stable minimizer.
\end{quote}
\revised{An application of the implicit function theorem, in tandem with classical results on tilt-stability \cite{Lewis2013}, yields
$\nabla \sigma(0)=\cI(\betas,\sD)^{\dagger}$ where $\dagger$ denotes the Moore–Penrose pseudoinverse; see, for instance, \cite[Theorem 2.7]{davis2023asymptotic}}. In particular, one may regard $\nabla \sigma(0)$ as akin to the inverse of the Fisher information matrix for MLE. Note that when $\betas$ is tilt-stable, the reciprocal of the regularity modulus ${\rm REG}(\sD)^{-1}$ coincides with the minimal nonzero eigenvalue of $\cI(\betas,\sD)$.

\paragraph{Intrinsic characterization.} The local defining equations for the manifold $\cM$ are often either unknown or difficult to work with. In this case, tilt-stability can be characterized through second-order expansions along curves. Namely, for any tangent vector $u\in \cT$, there exists a $C^2$-smooth curve $\gamma_u\colon (-\epsilon,\epsilon)\to \cM$ for some $\epsilon>0$ satisfying $\gamma_u(0)=\betas$ and $\dot{\gamma_u}(0)=u$. Since $\beta_\star$ is a critical point of $f$ on $\cM$, the covariant Hessian of $f$ at $\betas$ is the unique symmetric bilinear form $\nabla^2_{\cM} f(\betas)\colon \cT\times \cT\to \R$ satisfying 
$$\nabla^2_{\cM} f(\betas)[u,u]=(f\circ \gamma_u)''(0)\qquad \forall u\in \mathcal{T}.$$
It is classically known that the following equality holds:
$$\nabla^2_{\cM} f(\betas)[u,u]=u^\top \cdot\cI(\betas,\sD)\cdot u\qquad \forall u\in \mathcal{T},$$
where the matrix $\cI(\betas,\sD)$ is defined in \eqref{eqn:fishinf}.
\revised{Thus, viewing $\nabla^2_{\cM} f(\betas)$ as a linear operator on $\cT$ (using the Riemannian metric induced by the inclusion $\mathcal{M}\hookrightarrow\R^d$), we may identify $\nabla^2_{\cM} f(\betas)$ with the compression of $\cI(\betas,\sD)$ to $\cT$.}  Consequently, $\nabla^2_{\cM} f(\betas)$ is positive semidefinite since $\betas$ is a local minimizer of \ref{eq:main-problem2}. Conversely:
\begin{quote}
	\centering $\nabla^2_{\cM} f(\betas)$ is positive definite if and only if $\betas$ is a tilt-stable minimizer.
\end{quote}
In this case, the equality ${\rm REG}(\sD)^{-1}=\lambda_{\min}(\nabla^2_{\cM} f(\betas))$ holds.

\bigskip

The sensitivity matrix $\nabla\sigma(0)$ figures prominently in the asymptotic performance of estimation procedures. Notably, building on classical ideas due to H\'ajek and Le Cam, the recent paper of Duchi and Ruan~\cite{duchi2021asymptotic} established a lower bound on asymptotic covariance of  arbitrary estimators $\widehat \beta_n$ of $\betas$ using $n$ samples $z_{1}, \ldots , z_{n}.$ The precise lower bound is quite technical, and we refer the interested reader to their paper. In summary, their result shows that if $\betas$ is a tilt-stable minimizer, then the asymptotic covariance of $\sqrt{n}(\widehat\beta_n - \betas)$ is lower-bounded in the Loewner order by the matrix
\begin{equation}
	\label{eq:HLCCovariance}
	\hlS:=\nabla \sigma(0)\cdot \Cov(\nabla \ell(\betas; z))\cdot \nabla \sigma(0).
\end{equation}
Moreover, \revised{for the problems considered in this work} the expression in \eqref{eq:HLCCovariance} simplifies to
$\hlS \asymp \nabla \sigma(0)$ (equality up to a scalar factor); this is the case for example for (quasi) maximum likelihood estimation (Section~\ref{sec:glm}) and rank-one matrix regression problems (Section~\ref{sec:matrix-sensing}). Thus, asymptotically the best error that any estimator can achieve in the direction $u$ is on the order $n^{-1/2}\cdot u^{\top}\hlS u$. The direction $u$ with the worst error matches the top eigenvector of $\hlS$ and the number of samples necessary to find an accurate approximation of $\betas$ grows with $\lambda_{\max}(\hlS)$. Reassuringly, typical algorithms such as sample average approximation \cite{van2000asymptotic} and the stochastic projected gradient method \cite{polyak92,davis2023asymptotic} match the lower bound  \eqref{eq:HLCCovariance} and thus are asymptotically optimal.

In the rest of the paper, we analyze data distributions $\sD'$, nearest to a fixed measure $\sD$, for which the problem ${\rm SP}(\sD')$ admits unstable minimizers.
The formal definition will depend on whether the learning problem is of supervised or unsupervised type. We now describe these two settings in turn. The goal of unsupervised learning is to learn some property of a distribution $\sD$ from finitely many samples $z_1,\ldots, z_n\stackrel{\text{iid}}{\sim} \sD$. Dimension reduction with principal component analysis (PCA) is a primary example.
In this case, the solution $\betas$ of \ref{eq:main-problem2} strongly depends on the distribution $\sD$. Therefore a natural measure of robustness of the problem is the size of the smallest perturbation in the Wasserstein-2 distance $W_2(\sD',\sD)$ so that the problem $\rm SP(\sD')$ has an unstable minimizer. 

\begin{defn}{Radius of statistical efficiency (unsupervised)}{}
	Consider the problem \ref{eq:main-problem} and let $\Q \subset \cP_2(\cZ)$ be a distinguished set of permissible distributions. Define the \emph{set of ill-conditioned distributions} as
	\begin{equation}
		\cE := \left\{\sD' \in \Q \mid \text{There exists a minimizer of ${\rm SP}(\sD')$ that is unstable}\right\}\!.
	\end{equation}
	The \emph{radius of statistical efficiency} of $\sD$ is defined to be
	$$
	\RSE(\sD) := W_{2}(\sD, \cE).
	$$
\end{defn}

\newpage
Problems of supervised learning are distinctly different. The data consists of pairs $z=(x,y)\sim \sD$ comprised of feature data $\x\sim \sD_x$ and labels $y\sim \sD_{y\mid \x}$. We will assume that the conditional distribution 
$\sD_{y\mid \x}$ depends on the features $\x$ and a latent parameter $\beta_{\star}$. A typical example is the setting of regression under a model $y=g(\x,\betas)+\varepsilon$ where $\varepsilon$ is a random noise variable that is independent of $\x$. The  goal of the corresponding optimization problem \ref{eq:main-problem2} is to recover $\betas$. In contrast to unsupervised learning, the latent parameter $\betas$ is fixed a priori and is not a function of the data distribution. Therefore a natural measure of robustness of the problem is the size of the smallest perturbation to the feature vectors $W_2(\sD'_x,\sD_x)$ so that the problem ${\rm SP}(\sD'_x\times {\sD}_{y\mid\x})$ has an unstable minimizer. Notice that the conditional distributions of $y$ given $x$ coincide for the two measures $\sD$ and $\sD'=\sD'_x\times \sD_{y\mid\x}$. This type of shift exclusively only in the feature data appears often in the literature under the name of {\em covariate shift}~\cite{shimodaira2000improving,shift_book,agarwal2011linear,wen2014robust,reddi2015doubly}. 

\begin{defn}{Radius of statistical efficiency (supervised)}{}
	Consider the problem \ref{eq:main-problem} with $\sD=\sD_x\times \sD_{y\mid \x}$ and let $\betas$ be its minimizer. Let $\Q \subset \cP_2$ be a distinguished set of permissible distributions. Define the \emph{set of ill-conditioned distributions} as
	\begin{equation}
		\cE := \left\{\sD'_x \in \Q \mid \text{$\betas$ is not a tilt-stable minimizer of ${\rm SP}(\sD'_x\times \sD_{y\mid\x})$}\right\}\!.
	\end{equation}
	The \emph{radius of statistical efficiency} of $\sD$ is defined to be
	$$
	\RSE(\sD) := W_{2}(\sD_x, \cE).
	$$
\end{defn}

In the supervised setting, we will sometimes write $\RSE(\sD_x)$ in place of $\RSE(\sD)$, and, similarly, ${\rm SP}(\sD_x)$ and ${\rm REG}(\sD_x)$ in place of ${\rm SP}(\sD)$ and ${\rm REG}(\sD)$, respectively. In either setting, the quantity ${\rm RSE}(\sD)$ evidently measures the robustness of the problem because it quantifies the size of a neighborhood around $\sD$ for which all problem instances are stable. There is a small nuance in formalizing this statement due to a lack of compactness in the Wasserstein space; \revised{specifically, closed balls in $\cP_2$ are not compact.} To navigate this difficulty, we impose the minor assumption that any sequence of measures $\nu_i$ for which the problem ${\rm SP}(\nu_i)$ becomes progressively harder (${\rm REG}(\nu_i)\to \infty$) must approach the set of ill-conditioned distributions (${\rm RSE}(\nu_i)\to 0$). In all examples we consider, this holds at least on bounded sets $\cQ'\subset\cP_2$.
The proof of the following elementary proposition appears in Appendix~\ref{sec:proof_dist_rad}.

\begin{proposition}[RSE as a robustness measure]\label{prop:elementary_dist_rad}
Fix a set $\cQ'\subset\cQ$ and suppose that for any sequence of measures $\nu_i\in \cQ'\setminus \cE$ the implication holds:
\begin{equation}\label{eqn:basic_eqncompat}
{\rm REG}(\nu_i)\to \infty\qquad \Longrightarrow\qquad {\rm RSE}(\nu_i)\to 0.
\end{equation}
Then for any measure $\mu\in \cQ'\setminus \cE$ and any radius $0<r<{\rm RSE}(\mu)$, we have
\begin{equation}\label{eqn:blup_desired_M}
	\sup_{\nu\in \Q':\, W_2(\nu,\mu)\leq r}{\rm REG}(\nu)<+\infty.
\end{equation}
Moreover, if for some $c,q>0$, the inequality ${\rm RSE}(\nu)^q\leq c\cdot {\rm REG}(\nu)^{-1}$ holds for all $\nu \in \Q'\setminus \cE$, then the supremum in \eqref{eqn:blup_desired_M} is upper-bounded by $c\cdot ({\rm RSE}(\mu)-r)^{-q}$. 
\end{proposition}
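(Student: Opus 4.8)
The plan is a short argument by contradiction resting on two elementary facts: that $W_2$ is a genuine metric, so the distance-to-$\cE$ function $\nu\mapsto W_2(\nu,\cE)={\rm RSE}(\nu)$ is $1$-Lipschitz, and that the feasibility constraint $W_2(\nu,\mu)\le r<{\rm RSE}(\mu)$ automatically keeps $\nu$ outside $\cE$. For the latter: if some $\nu\in\cQ'$ had both $W_2(\nu,\mu)\le r$ and $\nu\in\cE$, then ${\rm RSE}(\mu)=W_2(\mu,\cE)\le W_2(\mu,\nu)\le r<{\rm RSE}(\mu)$, a contradiction. Hence every $\nu$ appearing in the supremum \eqref{eqn:blup_desired_M} lies in $\cQ'\setminus\cE$, so in particular ${\rm REG}(\nu)<+\infty$ for each such $\nu$, and the $1$-Lipschitz property of ${\rm RSE}(\cdot)$ gives the uniform lower bound
\begin{equation}\label{eqn:rse_reverse_tri}
	{\rm RSE}(\nu)\ \ge\ W_2(\mu,\cE)-W_2(\mu,\nu)\ \ge\ {\rm RSE}(\mu)-r\ >\ 0 .
\end{equation}

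For the first assertion I would suppose, toward a contradiction, that the supremum in \eqref{eqn:blup_desired_M} equals $+\infty$. Then there exists a sequence $\nu_i\in\cQ'$ with $W_2(\nu_i,\mu)\le r$ and ${\rm REG}(\nu_i)\to+\infty$; such a sequence exists because each individual ${\rm REG}(\nu_i)$ is finite by the previous paragraph. Since each $\nu_i$ belongs to $\cQ'\setminus\cE$, the standing hypothesis \eqref{eqn:basic_eqncompat} applies and forces ${\rm RSE}(\nu_i)\to 0$, which contradicts the uniform positive lower bound in \eqref{eqn:rse_reverse_tri}. Therefore the supremum is finite.

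For the quantitative refinement, fix any $\nu\in\cQ'$ with $W_2(\nu,\mu)\le r$. As already noted $\nu\in\cQ'\setminus\cE$, so the assumed inequality ${\rm RSE}(\nu)^q\le c\cdot{\rm REG}(\nu)^{-1}$ is available; rearranging it to ${\rm REG}(\nu)\le c\cdot{\rm RSE}(\nu)^{-q}$ and inserting \eqref{eqn:rse_reverse_tri} yields ${\rm REG}(\nu)\le c\cdot({\rm RSE}(\mu)-r)^{-q}$. Taking the supremum over all feasible $\nu$ gives the stated quantitative bound. I do not expect any genuinely hard step here; the only point meriting care is the observation that feasibility forces $\nu\notin\cE$ — hence that ${\rm REG}$ is finite there and the hypotheses of the proposition are actually applicable — together with the correct invocation of the contrapositive of the implication \eqref{eqn:basic_eqncompat}.
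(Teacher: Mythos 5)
Your argument is correct and follows essentially the same route as the paper's: a contradiction via a sequence with ${\rm REG}(\nu_i)\to\infty$, combined with the triangle inequality ${\rm RSE}(\mu)\le W_2(\nu,\mu)+{\rm RSE}(\nu)$ (your $1$-Lipschitz phrasing is the same fact), with the added care of checking that feasible $\nu$ lie outside $\cE$ so that the hypothesis actually applies. One remark: your quantitative conclusion ${\rm REG}(\nu)\le c\,({\rm RSE}(\mu)-r)^{-q}$ is what genuinely follows from the assumed inequality; the exponent $-1/q$ appearing in the proposition (and at the end of the paper's own proof) is a typo, since rearranging ${\rm RSE}(\mu)-r\le (c/{\rm REG}(\nu))^{1/q}$ gives ${\rm REG}(\nu)\le c\,({\rm RSE}(\mu)-r)^{-q}$.
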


At first sight, it appears that computing ${\rm RSE}(\sD)$ in concrete problems is difficult. Indeed, the set of ill-conditioned distributions $\cE$ may be quite exotic and computing ${\rm RSE}(\sD)$ amounts to estimating the Wasserstein-2 distance $W_2(\sD,\cE)$. In contrast, computing the regularity modulus ${\rm REG}(\sD)$ should be relatively straightforward. The key observation now is that the two quantities ${\rm RSE}(\sD)$ and ${\rm REG}(\sD)$ are closely related since $\cE$ is the set of minimizers of the function $\mathcal{J}(\mu):={\rm REG}(\mu)^{-1}$ (i.e., $\cE$ coincides with the zero set $[\cJ = 0]$). Thus, it would be ideal if there were a quantitative relationship of the form
\begin{equation}\label{eqn:error_bound_ineq}
(W_2(\mu,\cE))^{\ell_1}\lesssim \mathcal{J}(\mu)\lesssim (W_2(\mu,\cE))^{\ell_2}\qquad \forall \mu\in \cP_2
\end{equation}
for some constants $\ell_1, \ell_2\geq1$. The upper bound should be elementary to establish because it amounts to upper-bounding the growth of the function $\mathcal{J}(\mu)$. The lower bound is more substantial because it requires lower-bounding the growth of $\mathcal{J}(\mu)$ by a nonnegative function of the distance. Such lower-estimates are called {\em error bounds} in nonlinear analysis and can be checked by various ``slope''-based conditions. See, for example, the authoritative monographs on the subject \cite[Chapter 8]{cui2021modern} and \cite[Chapter 3]{ioffe2017variational}. Indeed, Demmel's original work \cite{demmel1987condition} relies on verifying an error bound property as well, albeit in the much simpler Euclidean setting. The following theorem provides a sufficient condition \eqref{eqn:advanced_ineq} ensuring the relationship \eqref{eqn:error_bound_ineq}, \revised{which appears in the equivalent form
\begin{equation*}
	{\rm REG}(\mu)^{q_2-1}\lesssim {\rm RSE}(\mu)\lesssim {\rm REG}(\mu)^{q_1-1},
\end{equation*} 
where $q_i \in[0,1)$ corresponds to \eqref{eqn:error_bound_ineq} via  $q_i = 1 - \ell_i^{-1}$ for $i=1,2$.} 
We state the theorem loosely by compressing all multiplicative numerical constants via the symbol $\lesssim$. More precise and sharper guarantees appear in Appendix~\ref{sec:proof_sec_slopes}. In the theorem statement, the symbol $DF(x)\colon \R^d\to \bS^k$ denotes the differential of $F$ at $x$, while the symbol $DF(x)^*\colon \bS^k\to \R^d$ denotes the adjoint linear map of $DF(x)$.

\begin{THM}[label={thm:infinitesimal}]{(Infinitesimal characterization of RSE)}{}
	Consider the problem \ref{eq:main-problem} of supervised learning and let $\betas$ be its minimizer. Set $\Q =\cP_2$ and suppose the Hessian $\nabla^2_{\cM} f(\betas)$ corresponding to any measure $\mu\in \cP_2$ can be written as 
	$$\nabla^2_{\cM} f(\betas)={\EE}_{\mu} F(x)$$ 
	for some $C^1$-smooth map $F\colon\R^d\to\bS^k_+$ satisfying $\|DF(x)\|_{\rm op}\lesssim 1+\|x\|_2$ for all $x\in \R^d$. Suppose there exist $q_1,q_2 \in [0,1)$ such that
	for all measures $\nu\in \cP_2\setminus\cE$, the estimate
	\begin{equation}\label{eqn:advanced_ineq}
\lambda_{\min}({\EE}_{\nu}F(x))^{q_1}\lesssim \sqrt{{\EE}_{\nu}\|DF(x)^*[uu^\top]\|^2_2}
\lesssim \lambda_{\min}({\EE}_{\nu}F(x))^{q_2}
\end{equation}
holds for some eigenvector $u\in \mathbb{S}^{k-1}$ of the matrix ${\EE}_{\nu}F(x)$ corresponding to its minimal eigenvalue. Then for every $\mu\in \cP_2$, the following estimate holds:
\begin{equation}\label{eqn:from_rse_to_reg}
{\rm REG}(\mu)^{q_2-1}\lesssim {\rm RSE}(\mu)\lesssim {\rm REG}(\mu)^{q_1-1}.
\end{equation}
	\end{THM}

The expression \eqref{eqn:advanced_ineq} becomes particularly enlightening when $F(x)$ decomposes as $F(x)=g(x)g(x)^\top$ for some $C^1$-smooth map $g\colon\R^d\to \R^k$. This situation is typical for regression problems (see Section~\ref{sec:glm}). A simple computation then shows that the sufficient condition \eqref{eqn:advanced_ineq} reduces to 
$$\left(\EE_{\nu}\langle u, g(x)\rangle^2\right)^{q_1}\lesssim \sqrt{{\EE}_{\nu}\Big[\langle u, g(x)\rangle^2 \|\nabla g(x)^{\top}u\|^2_2\Big]}
\lesssim \left(\EE_{\nu}\langle u, g(x)\rangle^2\right)^{q_2}.
$$
Observe that all three terms would match exactly with $q_1=q_2=\frac{1}{2}$ were it not for the term $\|\nabla g(x)^{\top}u\|^2_2$ that reweighs the middle integral. It is this reweighing that may impact the values of $q_1$ and $q_2$. The salient feature of Theorem~\ref{thm:infinitesimal} is that it completely circumvents the need for explicitly estimating the distance to the exceptional set $\cE$. One could apply this theorem to some of the examples that will appear in the rest of the paper. That being said, in all the upcoming examples, we will be able to compute the distance to $\cE$ explicitly, thereby obtaining sharper estimates than would follow from Theorem~\ref{thm:infinitesimal}. \revised{The main drawback of this result is its stringent assumptions on the differential of $F,$ which may limit its applicability.} Nonetheless, we believe that Theorem~\ref{thm:infinitesimal} is interesting in its own right and may be useful for analyzing RSE in certain situations.

\section{Principal component analysis}\label{section:pca}
Principal component analysis (PCA) is a common technique for dimension reduction. The goal of PCA is to find a low-dimensional subspace that captures the majority of the variance of the distribution. In this section, we compute the radius of statistical efficiency for PCA. Setting the stage, let $\x$ be a random vector in $\R^d$ drawn from a zero-mean distribution $\sD\in\cP_2^{\circ}(\R^d)$. A unit vector $v$ for which the random variable
$\langle v,\x\rangle$ has maximal variance is called the {\em first principal component} of $\sD$. Thus, the first principal component is a maximizer of the problem
\begin{equation}\label{eqn:basic_science0}
\max_{v\in \mathbb{S}^{d-1}}\,\mathop{\EE}_{x\sim\sD}\tfrac{1}{2}\langle v,x\rangle^2.
\end{equation}
 Equivalently, the first principal component is a unit eigenvector corresponding to the maximal eigenvalue of the covariance matrix $\Sigma_{\sD}:=\EE_{\sD}[xx^\top]$. Intuitively, the problem \eqref{eqn:basic_science0} is more challenging when the gap between the top two eigenvalues of $\Sigma_{\sD}$ is small. This is the content of the following lemma, whose proof appears in Appendix~\ref{sec:proof_pca_first}.

\begin{lemma}\label{lem:pca_first}
The set of ill-conditioned distributions for \eqref{eqn:basic_science0} is given by 
$$\mathcal{E}=\{\mu\in \cP_2^{\circ} \mid \lambda_1(\Sigma_{\mu})=\lambda_2(\Sigma_{\mu})\}.$$	
Moreover, for any $\mu\in \cP_2^{\circ}$, equality ${\rm REG} (\mu)^{-1}=\lambda_1(\Sigma_{\mu})-\lambda_2(\Sigma_{\mu})$ holds.
\end{lemma}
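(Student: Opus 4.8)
The plan is to put the PCA problem~\eqref{eqn:basic_science0} into the standard minimization template ${\rm SP}(\mu)$ and then invoke the Lagrangian characterization of tilt-stability recorded in Section~\ref{sec:second-moment}. Maximizing $\tfrac12\EE_{x\sim\mu}\langle v,x\rangle^2=\tfrac12\,v^\top\Sigma_\mu v$ over $\mathbb{S}^{d-1}$ is the same as minimizing $f(v):=-\tfrac12\,v^\top\Sigma_\mu v$ over the $C^\infty$ manifold $\cM=\mathbb{S}^{d-1}$, with distinguished set $\Q=\cP_2^\circ$. Since $\mathbb{S}^{d-1}$ is compact a minimizer exists, and the set of minimizers is precisely the intersection of $\mathbb{S}^{d-1}$ with the top eigenspace $V_1$ of $\Sigma_\mu$ (the unit maximizers of the Rayleigh quotient). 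Write $\Sigma:=\Sigma_\mu$ for brevity.

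Next I would compute the Fisher information matrix at an arbitrary minimizer $\betas=v_\star\in V_1\cap\mathbb{S}^{d-1}$. Taking $G(v)=\tfrac12(\|v\|_2^2-1)$ as the local defining equation of $\cM$ gives $\nabla G(v_\star)=v_\star$, so the tangent space is $\mathcal{T}=v_\star^\perp$. The Lagrangian is $\mathcal{L}(v,\lambda)=-\tfrac12\,v^\top\Sigma v+\tfrac{\lambda}{2}(\|v\|_2^2-1)$, and stationarity $\nabla_v\mathcal{L}(v_\star,\lambda_\star)=(\lambda_\star I-\Sigma)v_\star=0$ forces $\lambda_\star=\lambda_1(\Sigma)$ since $\Sigma v_\star=\lambda_1(\Sigma)v_\star$. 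Hence $\nabla^2_{vv}\mathcal{L}(v_\star,\lambda_\star)=\lambda_1(\Sigma)I-\Sigma$ and
$$\cI(v_\star,\mu)=P_{\mathcal{T}}\bigl(\lambda_1(\Sigma)I-\Sigma\bigr)P_{\mathcal{T}}.$$
(The intrinsic characterization gives the same form: differentiating $t\mapsto f(\gamma_u(t))$ twice along a great circle $\gamma_u$ on $\mathbb{S}^{d-1}$ with $\gamma_u(0)=v_\star$, $\dot\gamma_u(0)=u\in\mathcal{T}$, yields $\nabla^2_{\cM}f(v_\star)[u,u]=u^\top(\lambda_1(\Sigma)I-\Sigma)u$.)

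I would then read off the spectrum of $\cI(v_\star,\mu)$ restricted to $\mathcal{T}$. By the spectral theorem, restricting $\Sigma$ to the orthogonal complement of a unit top eigenvector deletes the top eigenvalue, so $\Sigma|_{\mathcal{T}}$ has eigenvalues $\lambda_2(\Sigma)\ge\cdots\ge\lambda_d(\Sigma)$ — verbatim, whether or not $\lambda_1(\Sigma)$ is simple. Therefore $\cI(v_\star,\mu)|_{\mathcal{T}}$ has eigenvalues $\lambda_1(\Sigma)-\lambda_j(\Sigma)$ for $j=2,\dots,d$, all nonnegative, with minimum $\lambda_1(\Sigma)-\lambda_2(\Sigma)$; note this minimum does not depend on which minimizer $v_\star$ was picked. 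The characterization ``$\cI(\betas,\mu)$ is positive definite on $\mathcal{T}$ if and only if $\betas$ is a tilt-stable minimizer'' now shows that $v_\star$ is tilt-stable exactly when $\lambda_1(\Sigma)>\lambda_2(\Sigma)$, and that whenever $\lambda_1(\Sigma)=\lambda_2(\Sigma)$ every minimizer is unstable. This gives $\cE=\{\mu\in\cP_2^\circ:\lambda_1(\Sigma_\mu)=\lambda_2(\Sigma_\mu)\}$, and for $\mu\notin\cE$ it gives ${\rm REG}(\mu)^{-1}=\lambda_{\min}\bigl(\nabla^2_{\cM}f(v_\star)\bigr)=\lambda_1(\Sigma_\mu)-\lambda_2(\Sigma_\mu)$.

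The step demanding the most care is the quantifier in the definition of $\cE$: a minimizer of the PCA problem need not be unique — it is unique up to sign only when $\lambda_1>\lambda_2$, and forms a positive-dimensional sphere when $\lambda_1=\lambda_2$ — so ``there exists an unstable minimizer'' must a priori be tested over this entire set. What makes this painless is that the tilt-stability criterion extracted above, namely the strict inequality $\lambda_1(\Sigma)>\lambda_2(\Sigma)$, is manifestly the same at every minimizer: it is insensitive both to the sign flip $v_\star\mapsto-v_\star$ and to the choice of $v_\star$ within a degenerate top eigenspace. Hence ``some minimizer is unstable'' and ``every minimizer is unstable'' coincide, and the description of $\cE$ follows. (As a sanity check for the degenerate case, one can also argue directly: when $\lambda_1=\lambda_2$ the point $v_\star$ is non-isolated among minimizers of $f$, so the solution map $\sigma(0)$ cannot be single-valued and $v_\star$ is unstable.)
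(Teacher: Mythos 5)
Your proposal is correct and follows essentially the same route as the paper: both compute the Lagrangian with optimal multiplier $\lambda_1(\Sigma_\mu)$, obtain the Hessian $\lambda_1(\Sigma_\mu)I-\Sigma_\mu$ restricted to the tangent space $v_\star^\perp$, and read off the minimal eigenvalue $\lambda_1-\lambda_2$ to settle both the description of $\cE$ and the value of ${\rm REG}(\mu)$. The only cosmetic difference is that the paper disposes of the degenerate case $\lambda_1=\lambda_2$ purely via the non-isolatedness of maximizers, which you include as your sanity check.
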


Computing the RSE amounts to determining the Wasserstein-2 distance of the base measure $\sD$ to the set $\cE$. The end result is the following theorem; we defer its proof to Appendix~\ref{sec:proof_pca1}.

\begin{THM}[label={thm:rse_pca1}]{(RSE for top principal component)}{}
	Consider the problem \eqref{eqn:basic_science0} with covariance matrix $\Sigma_\sD:=\EE_{\sD}[\x\x^\top]$. Then equality holds:
	\begin{equation}\label{eqn:rse_qmle_da_pca}
		\RSE(\sD) =  \tfrac{1}{\sqrt{2}}\bigg(\sqrt{\lambda_{1}(\Sigma_{\sD})}-\sqrt{\lambda_{2}(\Sigma_{\sD})}\bigg).
	\end{equation}
	In particular, if $\sD\in \cP_2^{\circ}\setminus \cE$, then  
	\begin{equation}\label{eqn:hardness_rse_toppca}
	\RSE(\sD)\cdot {\rm REG}(\sD)=\frac{1}{\sqrt{2}\Big(\sqrt{\lambda_{1}(\Sigma_{\sD})}+\sqrt{\lambda_{2}(\Sigma_{\sD})}\Big)}.
	\end{equation}
\end{THM}


 More generally, we may be interested in finding a $q$-dimensional subspace $\mathcal{V}\subset\R^d$ that captures most of the variance. Analytically, this amounts to solving the problem 
\begin{equation} \label{eqn:pca_brah}
	\max_{R\in {\rm Gr}(q, d)}~ f(R):=\mathop{\EE}_{x\sim \sD}\| R x\|^2_2,
\end{equation}
where the Grassmannian manifold ${\rm Gr}(q, d)$ consists of all orthogonal projections $R\in \bS^d$ onto $q$-dimensional subspaces of $\R^d$. The column space of an optimal matrix $R$ is called the {\em top $q$ principal subspace}. Equivalently, the top $q$ principal subspace is the span of the eigenvectors of $\Sigma_{\sD}$ corresponding to its top $q$ eigenvalues. The following lemma is a direct extension of Lemma~\ref{lem:pca_first}; see Appendix~\ref{section:proof_pca_multi} for a proof.

\begin{lemma}\label{lem:PCA_multi}
	The set of ill-conditioned distributions for \eqref{eqn:pca_brah} is given by 
	$$\mathcal{E}=\{\mu\in \cP_2^{\circ} \mid \lambda_q(\Sigma_{\mu})=\lambda_{q+1}(\Sigma_{\mu})\}.$$	
	Moreover, for any $\mu\in \cP_2^{\circ}$, equality ${\rm REG} (\mu)^{-1}=\lambda_q(\Sigma_{\mu})-\lambda_{q+1}(\Sigma_{\mu})$ holds.
\end{lemma}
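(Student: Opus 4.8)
The plan is to mirror the argument behind Lemma~\ref{lem:pca_first} (the case $q=1$), the only new ingredient being a short computation in the geometry of the Grassmannian ${\rm Gr}(q,d) \subset \bS^d$. First I would observe that every $R \in {\rm Gr}(q,d)$ satisfies $R^\top = R$ and $R^2 = R$, so $\|Rx\|_2^2 = x^\top R x = \langle xx^\top, R\rangle$ and the objective in \eqref{eqn:pca_brah} is the \emph{linear} functional $R \mapsto \langle \Sigma_\mu, R\rangle$. Thus ${\rm SP}(\mu)$ amounts to minimizing $f(R) = -\langle \Sigma_\mu, R\rangle$ over ${\rm Gr}(q,d)$, and by the Ky Fan maximum principle its minimizers are exactly the projections onto $q$-dimensional subspaces spanned by eigenvectors of $\Sigma_\mu$ for its $q$ largest eigenvalues---a unique minimizer when $\lambda_q(\Sigma_\mu) > \lambda_{q+1}(\Sigma_\mu)$, and a positive-dimensional family (a Grassmannian inside the straddled eigenspace) otherwise. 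The conclusion will then follow from the intrinsic characterization recorded in Section~\ref{sec:second-moment}, namely that $R_\star$ is tilt-stable iff $\nabla^2_{\cM} f(R_\star)$ is positive definite on the tangent space, together with ${\rm REG}(\mu)^{-1} = \lambda_{\min}(\nabla^2_{\cM} f(R_\star))$.

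To compute the covariant Hessian at a minimizer $R_\star$, I would diagonalize: after an orthogonal change of coordinates, $\Sigma_\mu = \diag(\lambda_1, \dots, \lambda_d)$ with $\lambda_1 \ge \cdots \ge \lambda_d \ge 0$ and $R_\star = \diag(I_q, 0)$. The tangent space to ${\rm Gr}(q,d)$ at $R_\star$ consists of the symmetric matrices whose only nonzero blocks are the off-diagonal ones, parametrized by $B \in \R^{(d-q)\times q}$; a convenient curve realizing such a tangent vector $u$ is the conjugation orbit $\gamma_u(t) = e^{t\Omega} R_\star e^{-t\Omega}$ with $\Omega$ the antisymmetric matrix built from $B$, for which $\dot\gamma_u(0) = [\Omega, R_\star] = u$ and $\ddot\gamma_u(0) = [\Omega, [\Omega, R_\star]] = \diag(-2B^\top B,\, 2BB^\top)$. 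Because $f$ is linear (ambient Hessian zero) and $R_\star$ is a critical point of $f|_{\cM}$ (the ambient gradient $-\Sigma_\mu$ is block-diagonal, hence normal to $\cM$ at $R_\star$), the intrinsic characterization gives
\[
\nabla^2_{\cM} f(R_\star)[u,u] = (f \circ \gamma_u)''(0) = -\langle \Sigma_\mu, \ddot\gamma_u(0)\rangle = 2\sum_{i=1}^{q}\sum_{j=1}^{d-q}\bigl(\lambda_i - \lambda_{q+j}\bigr)\, B_{ji}^2 .
\]
Since $\|u\|_F^2 = 2\|B\|_F^2 = 2\sum_{i,j} B_{ji}^2$, the Rayleigh quotient of this form ranges over $[\lambda_q - \lambda_{q+1},\, \lambda_1 - \lambda_d]$, the minimum being attained at the $(i,j) = (q,1)$ block.

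Reading off: $\nabla^2_{\cM} f(R_\star)$ is positive definite on the tangent space iff $\lambda_q(\Sigma_\mu) > \lambda_{q+1}(\Sigma_\mu)$, and in that case $\lambda_{\min}(\nabla^2_{\cM} f(R_\star)) = \lambda_q(\Sigma_\mu) - \lambda_{q+1}(\Sigma_\mu)$---notably independent of which minimizer was chosen. Combining this with the tilt-stability characterization and ${\rm REG}(\mu)^{-1} = \lambda_{\min}(\nabla^2_{\cM} f(R_\star))$ yields both claims. I expect the one point requiring care to be the degenerate case $\lambda_q(\Sigma_\mu) = \lambda_{q+1}(\Sigma_\mu)$: there the minimizer is not unique, and I must check that \emph{every} minimizer is unstable, not merely some. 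This follows from the displayed formula---the top-$q$ subspace of any minimizer necessarily straddles the degenerate eigenblock, so choosing $B$ supported on a single row/column pair inside that block produces a nonzero $u$ with $\nabla^2_{\cM} f(R_\star)[u,u] = 0$. All remaining steps (the commutator expansion of $\ddot\gamma_u(0)$ and the diagonalization of the quadratic form) are routine and parallel the $q=1$ proof.
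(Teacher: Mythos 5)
Your proof is correct and follows essentially the same route as the paper's: both parameterize ${\rm Gr}(q,d)$ as a conjugation orbit, differentiate twice along the orbit curves, and identify the minimal Rayleigh quotient of the resulting quadratic form as the eigengap $\lambda_q-\lambda_{q+1}$ (your commutator computation $\ddot\gamma_u(0)=[\Omega,[\Omega,R_\star]]$ and block bookkeeping check out). The only cosmetic difference is in the degenerate case, where you conclude instability from degeneracy of the covariant Hessian via the tilt-stability characterization, whereas the paper exhibits an explicit continuum of maximizers; both are valid given the facts recorded in Section~\ref{sec:second-moment}.
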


Again, computing the RSE amounts to determining the Wasserstein-2 distance of the base measure $\sD$ to the set $\cE$. The end result is the following theorem; the proof appears in Appendix~\ref{sec:proof_pca2}.

\begin{THM}[label={thm:rse_pca2}]{(RSE for PCA)}{}
	Consider the problem \eqref{eqn:pca_brah} with covariance matrix $\Sigma_{\sD}:=\EE_{\sD}[\x\x^\top]$. Then equality holds:
	\begin{equation*}
		\RSE(\sD) =  \tfrac{1}{\sqrt{2}}\bigg(\sqrt{\lambda_{q}(\Sigma_\sD)}-\sqrt{\lambda_{q+1}(\Sigma_\sD)}\bigg).
	\end{equation*}
	In particular, if $\sD\in \cP_2^{\circ}\setminus \cE$, then 
\begin{equation}\label{eqn:hardness_rse_toppca2}
	\RSE(\sD)\cdot {\rm REG}(\sD)=\frac{1}{\sqrt{2}\Big(\sqrt{\lambda_{q}(\Sigma_{\sD})}+\sqrt{\lambda_{q+1}(\Sigma_{\sD})}\Big)}.
\end{equation}
\end{THM}

\noindent We see two different regimes for the reciprocal relationship between RSE and REG. 
When \begin{equation}\label{eq:prop-eig}\sqrt{\lambda_q(\Sigma_{\sD})} + \sqrt{\lambda_{q+1}(\Sigma_{\sD})} \gg \sqrt{\lambda_q(\Sigma_{\sD})} - \sqrt{\lambda_{q+1}(\Sigma_{\sD})},\end{equation} the hardness of the problem is approximately inversely proportional to the distance to ill-posed problems: ${\rm REG}(\sD)\appropto \RSE(\sD)^{-1}$. Alternatively, when the two quantities in \eqref{eq:prop-eig} are proportional to each other, we have ${\rm REG}(\sD)\propto \RSE(\sD)^{-2}$.

The proofs of Theorems~\ref{thm:rse_pca1} and \ref{thm:rse_pca2} rely on estimating the distance to the exceptional sets $\cE$. Notice that these two sets are defined purely in terms of the spectrum of the covariance matrix. Although such ``spectral'' sets in $\cP_2^\circ$ are quite complicated, their distance can be readily computed. Geometric properties of such sets are explored in Appendix~\ref{sec:spec_funct} and may be of independent interest.

\section{Generalized linear models and (quasi) maximum likelihood estimation}\label{sec:glm}
In this section, we compute the RSE for a large class of supervised learning problems arising from
(quasi) maximum likelihood estimation (QMLE). The goal is to estimate a parameter $\betas \in \cM$ where the constraint set $\cM$ is a $C^2$-smooth embedded submanifold of $\R^d$. Setting the stage, suppose that we have an $L^2$ random feature vector $\x$ and an $L^2$ random label variable $y$ satisfying the generalized linear model (GLM) conditions
\begin{equation}\label{eq:glm}
	{\EE}\big[ y \!\mid\! \x \big] = h'(\langle \x, \beta_{\star} \rangle) \qquad\text{and}\qquad {\Var}\big[ y \!\mid\! \x \big] = \sigma^2 \cdot h''(\langle \x, \beta_{\star} \rangle)
\end{equation}
for some known $C^2$-smooth convex function
$h\colon\R \rightarrow\R$ with $h''>0$
and parameter $\sigma^2 > 0$. The function $h$ is called the \emph{cumulant function} of the model $\eqref{eq:glm}$, and $\sigma^2$ the dispersion parameter.
Here and from now on, we make the blanket assumption that the distribution $\sD_x$ of $x$ lies in a subset $\Q$ of  $\cP_2$  comprised of probability measures satisfying sufficient regularity conditions to take expectations and to interchange differentiation and expectation as necessary. In particular, we assume that the function $\phi\colon \cM \rightarrow\R$ given by $\phi(\beta) = {\EE}[h(\langle \x, \beta \rangle)]$ is well defined and has a $C^2$-smooth local extension to a neighborhood of $\betas$ in $\R^d$ with $\nabla\phi(\betas) = {\EE}[h'(\langle \x, \betas \rangle)\x]$ and $\nabla^{2}\phi(\betas) = \EE[h''(\langle \x, \betas \rangle)\x\x^{\top}]$. 

Following the seminal work of McCullagh \cite{McCullagh1983}, we consider the QMLE problem

\begin{equation}\label{eq:glm_min}
	\min_{\beta\in\cM}f(\beta) := \mathop{\EE}_{(x,y)\sim\sD}\big[h(\langle \x, \beta \rangle) - y\langle \x, \beta \rangle \big].
\end{equation}
The function $f$ given in \eqref{eq:glm_min} is called the negative log quasi-likelihood of the GLM \eqref{eq:glm}. In general, $f$ is not the negative of the log likelihood function, yet it shares many of its properties and hence the name. The motivation for this loss function comes from the canonical example of \eqref{eq:glm} where the conditional density of $y$ given $\x$
admits an exponential-family formulation.
In this case, standard maximum likelihood estimation of $\betas$ coincides with \eqref{eq:glm_min}. As an illustration, Table~\ref{table:glm_regression_1} lists some common GLM examples.

\begin{table}[h]
	\renewcommand{\arraystretch}{1.3}
	\begin{center}
		\begin{tabular}{lllll}
			\toprule
			\textbf{Model}
			         & \textbf{Response variable}
			         & \textbf{Cumulant} $h(\theta)$                                                                                                            			         & \textbf{Second derivative} $h''(\theta)$
			\\
			\midrule
			Linear   & $y = \langle \x, \betas \rangle + \varepsilon$                                                                  & $\tfrac{1}{2}\theta^2$ 
			         & $1$
			\\
			Logistic & $y\!\mid\!\x \sim \mathsf{Ber}\Big(\tfrac{\exp\langle \x,\betas \rangle}{1+\exp\langle \x,\betas \rangle}\Big)$ & $\log(1 + \exp\theta)$ 			         & $\frac{\exp\theta}{(1+\exp\theta)^2}$
			\\
			Poisson  & $y\!\mid\!\x \sim \mathsf{Poi}({\exp}\langle \x,\betas \rangle)$                                                & $\exp\theta$           
			         & $\exp\theta$
			\\
			Gamma    & $y\!\mid\!\x \sim \Gamma(\sigma^{-2}, -\sigma^{-2}\langle \x,\betas \rangle)$                                   & $-\log(-\theta)$
			         & $\theta^{-2}$
			\\
			\bottomrule
		\end{tabular}
	\end{center}
	\caption{GLM examples corresponding to \eqref{eq:glm}. In the linear regression model, $\varepsilon$ is random noise 
	satisfying ${\EE}[\varepsilon \!\mid\! \x] = 0$ and ${\Var}[\varepsilon \!\mid\! \x]=\sigma^2$. 	}
	\label{table:glm_regression_1}
\end{table}

We begin with the following lemma that characterizes the set of ill-conditioned problem instances. Henceforth, we let  $\mathcal{T}:=T_{\cM}(\betas)$ denote the tangent space of $\cM$ at $\betas$.

\begin{lemma}\label{lem:qmle}
	The set of ill-conditioned distributions for \eqref{eq:glm_min} is given by
	$$\mathcal{E}=\big\{\mu\in \Q \mid \mathcal{T}\cap \ker(\Sigma_{\mu})\neq \{0\}\big\}.$$
	Moreover, for any $\mu\in \Q\setminus \cE$ for which there exist $\lb,\ub>0$ satisfying $\lb \leq  \revised{h''(\langle x,\beta_{\star}\rangle)^{-1}} \leq \ub$ for $\mu$-almost every $x$, we have
	$\lb\leq {\rm REG} (\mu)\cdot \lambda_{\min}(\Sigma_{\mu}|_{\cT})\leq \ub $.
\end{lemma}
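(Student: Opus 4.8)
The plan is to collapse both assertions onto a single object: the weighted second-moment form $q_\mu(u):=\EE_\mu\big[h''(\langle x,\betas\rangle)\langle x,u\rangle^2\big]$ on the tangent space $\cT$, and then feed it into the tilt-stability apparatus of Section~\ref{sec:second-moment}. The first thing to establish is that $\betas$ is an \emph{unconstrained} minimizer of the objective for every feasible feature marginal $\mu\in\Q$. Writing the population loss as $f_\mu(\beta)=\EE_{x\sim\mu}[h(\langle x,\beta\rangle)]-\langle\EE_{x\sim\mu}[\EE[y\mid x]\,x],\beta\rangle$ and invoking the GLM moment identity $\EE[y\mid x]=h'(\langle x,\betas\rangle)$ from \eqref{eq:glm}, one gets $\nabla f_\mu(\betas)=\EE_\mu[h'(\langle x,\betas\rangle)x]-\EE_\mu[h'(\langle x,\betas\rangle)x]=0$. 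Convexity of $h$ makes $f_\mu$ convex, so $\betas$ is in fact a global minimizer of $f_\mu$ over $\R^d$, hence over $\cM$, and the optimal Lagrange multiplier at $\betas$ vanishes (surjectivity of $\nabla G(\betas)$ together with $\nabla_\beta\cL(\betas,\lambda_\star)=\nabla f_\mu(\betas)+\nabla G(\betas)^\top\lambda_\star=0$ forces $\lambda_\star=0$). Therefore the matrix in \eqref{eqn:fishinf} simplifies to $\cI(\betas,\mu)=P_{\cT}\,\nabla^2 f_\mu(\betas)\,P_{\cT}=P_{\cT}\,\EE_\mu[h''(\langle x,\betas\rangle)xx^\top]\,P_{\cT}$, whose quadratic form on $\cT$ is precisely $u\mapsto q_\mu(u)$.

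\emph{Identifying $\cE$.} By the tilt-stability criterion recalled in Section~\ref{sec:second-moment}, for a given $\mu$ the point $\betas$ is a tilt-stable minimizer if and only if $\cI(\betas,\mu)$ is positive definite on $\cT$; since $\betas$ is a local minimizer, $q_\mu$ is positive semidefinite on $\cT$, so this fails exactly when $q_\mu(u)=0$ for some nonzero $u\in\cT$. Because $h''>0$ everywhere, the integrand $h''(\langle x,\betas\rangle)\langle x,u\rangle^2$ is nonnegative, so $q_\mu(u)=0$ forces $\langle x,u\rangle=0$ for $\mu$-a.e.\ $x$, i.e.\ $u^\top\Sigma_\mu u=0$, i.e.\ $u\in\ker(\Sigma_\mu)$; conversely any such $u$ gives $q_\mu(u)=0$. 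Hence $\cE=\{\mu\in\Q:\cT\cap\ker(\Sigma_\mu)\neq\{0\}\}$.

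\emph{The ${\rm REG}$ bound.} Fix $\mu\in\Q\setminus\cE$, so $\betas$ is tilt-stable and the intrinsic characterization gives ${\rm REG}(\mu)^{-1}=\lambda_{\min}(\nabla^2_{\cM}f_\mu(\betas))=\min_{u\in\mathbb{S}^{d-1}\cap\cT}q_\mu(u)$. Under the hypothesis $\lb\leq h''(\langle x,\betas\rangle)\leq\ub$ for $\mu$-a.e.\ $x$, I would sandwich the integrand pointwise in $u$ to obtain $\lb\,u^\top\Sigma_\mu u\leq q_\mu(u)\leq\ub\,u^\top\Sigma_\mu u$ for every $u\in\cT$. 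Minimizing over $u\in\mathbb{S}^{d-1}\cap\cT$ (the upper bound being attained at a minimal eigenvector of $\Sigma_\mu|_{\cT}$) then yields $\lb\,\lambda_{\min}(\Sigma_\mu|_{\cT})\leq{\rm REG}(\mu)^{-1}\leq\ub\,\lambda_{\min}(\Sigma_\mu|_{\cT})$, which is the content of the lemma's second assertion relating ${\rm REG}(\mu)$ and $\lambda_{\min}(\Sigma_\mu|_{\cT})$.

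\emph{Main obstacle.} None of the steps is deep; the argument is essentially bookkeeping around the machinery of Section~\ref{sec:second-moment}, so the ``hard part'' is really just getting two points right. The first is the persistence of the GLM moment identity after replacing the feature marginal: this is exactly what forces $\nabla f_\mu(\betas)=0$ and $\lambda_\star=0$ for \emph{every} $\mu\in\Q$, and hence what makes $\cE$ depend on $\mu$ only through the pair $(\cT,\Sigma_\mu)$; it is guaranteed by the standing regularity assumptions on $\Q$ but should be stated explicitly. The second is the elementary measure-theoretic step in the identification of $\cE$ — that a nonnegative integrable function with zero integral vanishes $\mu$-almost everywhere — which is what upgrades the analytic condition ``$q_\mu(u)=0$'' to the spectral condition ``$u\in\ker(\Sigma_\mu)$.''
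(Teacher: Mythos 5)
Your proposal is correct and follows essentially the same route as the paper's proof: verify $\nabla f(\betas)=0$ for every $\mu\in\Q$ via the GLM conditional-mean identity (hence zero multipliers), identify $\cI(\betas,\mu)$ with $P_{\cT}\,\EE_\mu[h''(\langle x,\betas\rangle)xx^\top]\,P_{\cT}$, use $h''>0$ to equate its kernel on $\cT$ with $\cT\cap\ker(\Sigma_\mu)$, and sandwich the minimal eigenvalue under the bounds on $h''$. You are somewhat more explicit than the paper about the vanishing of the Lagrange multiplier and the measure-theoretic step, but the argument is the same.
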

The proof of this lemma is deferred to Appendix~\ref{sec:proof_qmle}.
The RSE for the problem \eqref{eq:glm_min} follows quickly by computing the distance to the set $\cE$ in Lemma~\ref{lem:qmle}; see Appendix~\ref{sec:proof_rse_qmle} for a proof.

\begin{THM}[label={thm:RSE_QMLE}]{(RSE for QMLE)}{}
	Consider the QMLE problem \eqref{eq:glm_min} and let $\sD_{\x}\in \Q$ be the distribution of $\x$ with second-moment matrix $\Sigma:=\EE[\x\x^\top]$. \revised{Suppose that $\cQ$ satisfies the following closure property: for every $\mu\in\cQ$, we have $(\sP_{v^{\perp}})_{\#}\mu\in\cQ$ for some minimal eigenvector $v$ of $\Sigma_{\mu}\vert_{\cT}$.} Then the identity holds:
	\begin{equation}\label{eqn:rse_qmle_da}
		\RSE(\sD) =  \sqrt{\lambda_{\min}(\Sigma\vert_{\cT})}.
	\end{equation}
	In particular, if $\sD_x\in\cQ\setminus\cE$ and for some $\lb,\ub>0$ the inequality $\lb\leq \revised{h''(\langle x,\beta_{\star}\rangle)^{-1}} \leq \ub$ holds for $\sD_x$-almost every $x$, then we have
	$$\lb\leq \RSE(\sD)^2\cdot {\rm REG}(\sD)\leq \ub.$$
\end{THM}

Thus we see that under mild conditions, the hardness of the problem is approximately inversely proportional to the square of the distance to the nearest ill-posed problem: ${\rm REG}(\sD)\appropto \RSE(\sD)^{-2}$. Unlike the reciprocal relationship for PCA, the exponent here does not change depending on the spectrum of the second-moment matrix.

Aside from the examples in Table~\ref{table:glm_regression_1}, an interesting problem instance occurs in sparse recovery.  Namely, let $\cM$ be the submanifold of $\R^d$ comprised of $k$-sparse vectors, i.e., those that have precisely $k$ nonzero components. Then the tangent space of $\cM$ at $\betas$ is the $k$-dimensional subspace of $\R^d$ on which $\betas$ is supported:
\begin{equation*}
	\cT = {\Span}\{\be_i \mid \langle \be_i, \betas \rangle \neq 0\},
\end{equation*}
where $\{\be_1,\ldots,\be_d\}$ denotes the standard basis of $\R^d$. 
 The right-side of \eqref{eqn:rse_qmle_da} in this case is the square root of the minimum eigenvalue of the submatrix of $\Sigma$ whose  columns and rows are indexed by the nonzero coordinates of $\betas$.

\revised{Nonsmooth variants of the QMLE problem \eqref{eq:glm_min} arise commonly in practice, and it is possible for the framework of this section to be extended to handle such problems by introducing more delicate regularity assumptions that guarantee the existence of an active manifold near the minimizer, as in \cite{davis2023asymptotic}. For example, consider the LASSO least squares problem
\begin{equation}\label{eq:LASSO}
	\min_{\beta:\,\|\beta\|_{1}\leq t} \,\tfrac{1}{2n}\|X\beta - y\|_{2}^2,
\end{equation}
where $X$ is an $n \times d$ feature matrix with full column rank. Then \eqref{eq:LASSO} admits a unique solution $\betas$, and under sufficient regularity assumptions (e.g., strict complementarity), the active manifold $\cM$ on which \eqref{eq:LASSO} restricts to a smooth problem is the relative interior of the lowest-dimensional face of the $\ell_1$ ball $\{\beta \in\R^d \mid \|\beta\|_{1} \leq t\}$ containing $\betas$, where $t=\|\betas\|_{1}$. In this case, \eqref{eqn:rse_qmle_da} applies with $\cT = \Span(\cM - \betas)$.}

\section{Rank-one matrix regression problems}\label{sec:matrix-sensing}
In this section, we compute the RSE for a number of tasks in low-rank matrix recovery---a large class of problems with numerous applications in  control, system identification, recommendation systems, and machine learning. See, for example, \cite{davenport2016overview,chi2019nonconvex,charisopoulos2021low,wright2022high} for an overview.
Setting the stage, consider the measurement model
\begin{equation}
	\label{eq:matrix-labels}
	y = \dotp{\bX, \Ms} +\varepsilon,
\end{equation}
where the matrix $\bX\in \R^{d_1\times d_2}$ is drawn from some distribution $\sD_X$, the noise $\varepsilon$ is independent of  $\bX$ and has zero mean and variance $\sigma^2$, and $\Ms$ is a rank-$r$ matrix. The goal of low-rank matrix recovery is to estimate the latent parameter $\Ms$ from finitely many i.i.d.\ samples $(\bX_{1}, y_{1}), \dots, (\bX_{n}, y_{n})$. For the rest of the section, we focus on the simplest case of rank $r=1$ matrix recovery.

\subsection{Phase retrieval}\label{sections:single-index}
The problem of phase retrieval corresponds to \eqref{eq:matrix-labels} with the ground truth $\Ms=\betas\betas^\top$ and the feature data $\bX=xx^\top$ being PSD rank-one matrices. We will let $\sD_x\in \cP_4(\R^d)$ denote the distribution of $\x\in \R^d$ and let $\sD$ denote the joint distribution of $(x,y)$. There are two standard ways to write the phase retrieval problem as a problem of stochastic optimization. The first is simply to minimize the mean squared error over PSD rank-one matrices:
\begin{equation}\label{eq:phaseretrival_rank1}
  \min_{{M\in\bm{S}_+^d:\, \rank(M)=1}}\,f(M) := \mathop{\EE}_{(x,y)
    \sim \sD}\tfrac{1}{2} \big(x^{\top} M x - y\big)^{2}.
\end{equation}
Alternatively, one may parameterize PSD rank-one matrices as $M=\beta \beta^\top$ and then minimize the mean squared error over the factors:
\begin{equation}\label{eq:phaseretrival}
	\min_{{\beta\in \R^d\setminus\{0\}}}\,f(\beta) := \mathop{\EE}_{(x,y)\sim \sD}\tfrac{1}{8}\big(\langle x, \beta \rangle^2 - y\big)^{2}.
\end{equation}
From the viewpoint of RSE, there is no significant distinction between these two formulations. The proof of the next lemma appears in Appendix~\ref{sec:proof_basic_phase}.

\begin{lemma}\label{lem:basic_except_1_phase}
The set of ill-conditioned distributions for both \eqref{eq:phaseretrival_rank1} and \eqref{eq:phaseretrival}  is given by 
\begin{equation*}
	\mathcal{E} = \bigcup_{v\in \mathbb{S}^{d-1}}\Big\{\mu\in\mathcal{P}_4(\R^d) \mid \Supp(\mu)\subset \betas^\perp \cup v^\perp\Big\}.
\end{equation*}
Fix any measure $\mu\in \cP_4\setminus \cE$ and define the matrix $\hat\Sigma_{\mu}:={\EE}_{\mu} \big[\langle x,\beta_{\star}\rangle^2xx^\top\big]$. Then
\begin{equation}
\lb \leq {\rm REG} (\mu)\cdot \lambda_{\min}(\hat\Sigma_{\mu})\leq \ub,\label{eqn:Lweird_asf2}
\end{equation}
where $(\lb,\ub)=\big(\tfrac{1}{2}\|\betas\|^2_2 , \|\betas\|^2_2\big)$ for problem \eqref{eq:phaseretrival_rank1} and $(\lb,\ub)=(1,1)$ for problem \eqref{eq:phaseretrival}.
\end{lemma}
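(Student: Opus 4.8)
The plan is to analyze the optimization problems \eqref{eq:phaseretrival_rank1} and \eqref{eq:phaseretrival} via the intrinsic characterization of tilt-stability, i.e., to compute the covariant Hessian of the objective at the minimizer and determine exactly when it fails to be positive definite on the relevant tangent space. For \eqref{eq:phaseretrival}, the minimizer is (up to sign) $\betas$, the manifold is all of $\R^d$, and a direct Taylor expansion gives $\nabla^2 f(\betas)[u,u] = \EE_\mu\langle x,\betas\rangle^2\langle x,u\rangle^2 = u^\top \hat\Sigma_\mu u$ after using the GLM-type conditioning on $y$ to kill the cross terms (the noise is mean-zero and independent, and $\EE[y\mid x]=\langle x,\betas\rangle^2$). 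Hence $\nabla^2 f(\betas)=\hat\Sigma_\mu$ exactly, which immediately yields ${\rm REG}(\mu)^{-1}=\lambda_{\min}(\hat\Sigma_\mu)$ and the bounds \eqref{eqn:Lweird_asf2} with $(\lb,\ub)=(1,1)$; singularity of $\hat\Sigma_\mu$ is equivalent to the existence of a unit $v$ with $\EE_\mu\langle x,\betas\rangle^2\langle x,v\rangle^2=0$, i.e., $\langle x,\betas\rangle\langle x,v\rangle=0$ $\mu$-a.s., which is exactly the condition $\Supp(\mu)\subset\betas^\perp\cup v^\perp$.

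For the matrix formulation \eqref{eq:phaseretrival_rank1}, I would use the Lagrangian characterization (or equivalently compute the covariant Hessian on the rank-one manifold $\{M\succeq 0:\rank M=1\}$ at $M_\star=\betas\betas^\top$). The tangent space to this manifold at $\betas\betas^\top$ is $\mathcal{T}=\{\betas w^\top + w\betas^\top : w\in\R^d\}$. Parametrizing a curve $M(t)$ through $\betas\betas^\top$ with velocity $\betas w^\top + w\betas^\top$ — e.g. $M(t)=(\betas+tw)(\betas+tw)^\top$ — one computes $(f\circ M)''(0)$; the second-order term coming from the ``$\langle x,\cdot\rangle$'' quadratic contributes $2\EE_\mu\langle x,\betas\rangle^2\langle x,w\rangle^2$ (again the linear-in-$t$ term drops after conditioning on $y$, and the curvature term $\ddot M(0)=2ww^\top$ pairs against the residual which is mean-zero). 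Restricting to the tangent directions $w\in\betas^\perp$ versus $w\parallel\betas$ and tracking the $\|\betas\|^2$ factors gives the asserted sandwich $2\|\betas\|^2 \le {\rm REG}(\mu)\cdot\lambda_{\min}(\hat\Sigma_\mu)\le 4\|\betas\|^2$; the factor-of-two spread reflects that the quadratic form on $\mathcal{T}$ is $\hat\Sigma_\mu$ reweighted depending on the overlap of $w$ with $\betas$. The ill-conditioned set is again characterized by singularity of $\hat\Sigma_\mu$, hence the same union over $v\in\mathbb{S}^{d-1}$.

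The main obstacle, I expect, is the bookkeeping for \eqref{eq:phaseretrival_rank1}: correctly identifying the tangent space of the rank-one PSD manifold, choosing a clean local parametrization, and carefully extracting $(f\circ\gamma)''(0)$ so that the $\|\betas\|^2$-dependent constants $\lb=2\|\betas\|^2$ and $\ub=4\|\betas\|^2$ come out sharp rather than merely as some constants. One has to verify that the cross-term $\EE_\mu[(x^\top M_\star x - y)\cdot(\text{first-order perturbation})]$ vanishes — this uses $\EE[y\mid x]=x^\top M_\star x$ from \eqref{eq:glm}-type conditioning — and that the curvature contribution from $\ddot\gamma(0)$ likewise pairs against the mean-zero residual to vanish in expectation, leaving only the genuinely positive term $2\EE_\mu\langle x,\betas\rangle^2\langle x,w\rangle^2$ plus lower-order reweighting. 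Once the Hessian is pinned down on $\mathcal{T}$, the equivalence ``$\nabla^2_{\cM}f(\betas)$ positive definite on $\mathcal{T}$ $\iff$ $\betas\betas^\top$ tilt-stable'' from the intrinsic characterization in Section~\ref{sec:second-moment}, together with the elementary fact that $\hat\Sigma_\mu$ is PSD and its nullspace is governed by the support condition, closes the argument. The remaining verification that $\mathcal{E}$ is exactly $\bigcup_v\{\mu:\Supp(\mu)\subset\betas^\perp\cup v^\perp\}$ is then a matter of unwinding: $v^\top\hat\Sigma_\mu v=0 \iff \langle x,\betas\rangle^2\langle x,v\rangle^2=0$ $\mu$-a.s. $\iff$ $\mu$-a.e. $x$ lies in $\betas^\perp\cup v^\perp$.
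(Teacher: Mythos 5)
Your proposal follows essentially the same route as the paper: for \eqref{eq:phaseretrival} it computes $\nabla^2 f(\betas)=\hat\Sigma_\mu$ directly (using the mean-zero residual to kill cross terms), and for \eqref{eq:phaseretrival_rank1} it evaluates the second-order form on the tangent space $\{\betas w^\top+w\betas^\top\}$ and tracks the ratio $\|\Delta\|_F^2/(\|\betas\|^2\|w\|^2)\in[2,4]$, exactly as the paper does via the Lagrangian characterization (your curve-based computation is the equivalent intrinsic version, with the curvature term vanishing because $\nabla f(\Ms)=0$). The only quibble is a constant: the leading term of $(f\circ M)''(0)$ is $\EE(x^\top\dot M(0)x)^2=4\,\EE\langle x,\betas\rangle^2\langle x,w\rangle^2$, not $2\,\EE\langle x,\betas\rangle^2\langle x,w\rangle^2$, though this does not affect the structure of the argument.
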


It remains to compute the distance to the exceptional set $\cE$. The following result shows that minimizing the expected squared distance to $\beta_{\star}^\perp \cup v^\perp$ over $v\in\mathbb{S}^{d-1}$ yields the squared $W_2$ distance to $\cE$; its proof appears in Appendix~\ref{sec:proof_pse_generalphase}.

\begin{THM}[label={pse_generalphase}]{(RSE for phase retrieval)}{}
	For problems \eqref{eq:phaseretrival_rank1} and \eqref{eq:phaseretrival}, the following equality holds:
	\begin{equation*}
		 {\rm RSE}(\sD) =  \min_{v \in \mathbb{S}^{d-1}} \sqrt{\mathop{\EE}_{x\sim\sD_x} \!\bigg[ \Big\langle x, \mfrac{\beta_{\star}}{\|\beta_{\star}\|}\Big\rangle^2 \wedge \big\langle x, v \big\rangle^2 \bigg]}.
	\end{equation*}
\end{THM}

In light of Lemma~\ref{lem:basic_except_1_phase} and Theorem~\ref{pse_generalphase}, the following equalities hold for the formulation \eqref{eq:phaseretrival}:
\begin{align*}
{\rm RSE}(\sD) &=  \min_{v \in \mathbb{S}^{d-1}} \sqrt{\mathop{\EE}_{x\sim\sD_x} \!\bigg[ \Big\langle x, \mfrac{\beta_{\star}}{\|\beta_{\star}\|}\Big\rangle^2 \wedge \big\langle x, v \big\rangle^2 \bigg]},\\
{\rm REG}(\sD)^{-1} &=  \min_{v \in \mathbb{S}^{d-1}} \mathop{\EE}_{x\sim\sD_x} \!\Big[ \big\langle x, \beta_{\star}\big\rangle^2 \cdot \big\langle x, v \big\rangle^2 \Big].
\end{align*}

A moment of thought leads one to realize that for reasonable distributions, the first equation should scale as $\sqrt{\lambda_{\min}(\Sigma_{\sD_x})}$, while the scaling of the second should be at least $\lambda_{\min}(\Sigma_{\sD_x})$.
We now verify that this is indeed the case when the base distribution is Gaussian with $x \sim \mathsf{N}(0,\Sigma)$ for some covariance matrix $\Sigma\succeq 0$.  To this end, define the two functions
\begin{align*}
h_{\Sigma}(u, v) &:= \mathop{\EE}_{x \sim \mathsf{N}(0, \Sigma)}\!\big[ \langle x, u\rangle^{2} \wedge \langle x, v\rangle^{2} \big],\\
g_{\Sigma}(u,v) &:= \mathop{\EE}_{x \sim \mathsf{N}(0, \Sigma)}\!\big[ \langle x, u\rangle^{2} \cdot \langle x, v\rangle^{2} \big],
\end{align*}
where  $u, v \in \mathbb{S}^{d-1}$ vary over the unit sphere. We defer the proof of the next result to Appendix~\ref{sec:proof_computeforgauss}. 
\begin{theorem}\label{thm:compute_for_gauss}
  For any $u\in \mathbb{S}^{d-1}$, the following estimates hold:
  \begin{align}
  \big(1-\tfrac{2}{\pi} \big) \lambda_{\min}(\Sigma) \leq  &\min_{v \in \mathbb{S}^{d-1}}  h_{\Sigma}(u,v)\leq \lambda_{\min}(\Sigma),\label{eqn:min}\\
  \lambda_{\min}(\Sigma)\cdot \langle \Sigma u,u \rangle\leq &\min_{v \in \mathbb{S}^{d-1}}  g_{\Sigma}(u,v)\leq 3\lambda_{\min}(\Sigma)\cdot\langle \Sigma u,u \rangle\label{eqn:product}.
  \end{align}
\end{theorem}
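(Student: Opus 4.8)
\emph{Proof strategy.} The plan is to reduce both estimates to one- and two-dimensional Gaussian computations by recording the joint law of $(\langle x,u\rangle,\langle x,v\rangle)$ under $x\sim\mathsf{N}(0,\Sigma)$: it is a centered bivariate Gaussian with $\Var\langle x,u\rangle=\langle\Sigma u,u\rangle=:a^2$, $\Var\langle x,v\rangle=\langle\Sigma v,v\rangle=:b^2$, and $\Cov(\langle x,u\rangle,\langle x,v\rangle)=\langle\Sigma u,v\rangle=:c$. Since $u,v$ are unit vectors, $a^2,b^2\ge\lambda_{\min}(\Sigma)$; the case $\lambda_{\min}(\Sigma)=0$ is trivial (pick $v$ with $\langle\Sigma v,v\rangle=0$), so I would assume $\lambda_{\min}(\Sigma)>0$. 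In every inequality I would obtain the upper bound by evaluating at $v=v_{\min}$, a unit eigenvector of $\Sigma$ at $\lambda_{\min}(\Sigma)$, and establish the lower bound for \emph{every} unit $v$ so that it survives the minimization over $v$.

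For \eqref{eqn:product}, I would first invoke Isserlis' formula (Wick's theorem) to get the exact identity $g_\Sigma(u,v)=\langle\Sigma u,u\rangle\langle\Sigma v,v\rangle+2\langle\Sigma u,v\rangle^2$. Discarding the last term and using $\langle\Sigma v,v\rangle\ge\lambda_{\min}(\Sigma)$ gives the lower bound. At $v=v_{\min}$ one has $\langle\Sigma v_{\min},v_{\min}\rangle=\lambda_{\min}(\Sigma)$ and $\langle\Sigma u,v_{\min}\rangle=\lambda_{\min}(\Sigma)\langle u,v_{\min}\rangle$; bounding $\langle u,v_{\min}\rangle^2\le1$ and $\lambda_{\min}(\Sigma)\le\langle\Sigma u,u\rangle$ then collapses the identity to $g_\Sigma(u,v_{\min})\le3\lambda_{\min}(\Sigma)\langle\Sigma u,u\rangle$.

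For \eqref{eqn:min}, the upper bound is immediate: $\langle x,u\rangle^2\wedge\langle x,v_{\min}\rangle^2\le\langle x,v_{\min}\rangle^2$ has expectation $\lambda_{\min}(\Sigma)$. The lower bound is the heart of the matter. Writing $X=\langle x,u\rangle$ and $Y=\langle x,v\rangle$, the inequalities $\lambda_{\min}(\Sigma)/a^2\le1$ and $\lambda_{\min}(\Sigma)/b^2\le1$ combined with monotonicity of $\min$ give $X^2\wedge Y^2\ge\lambda_{\min}(\Sigma)\bigl((X/a)^2\wedge(Y/b)^2\bigr)$, and $(X/a,Y/b)$ is a standard bivariate Gaussian with correlation $\rho:=c/(ab)\in[-1,1]$. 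So it remains to compute $\EE[\tilde X^2\wedge\tilde Y^2]$ for a standard bivariate Gaussian $(\tilde X,\tilde Y)$ with correlation $\rho$. Using $s\wedge t=\tfrac12(s+t-|s-t|)$, this equals $1-\tfrac12\EE|\tilde X^2-\tilde Y^2|=1-\tfrac12\EE\bigl[|\tilde X-\tilde Y|\,|\tilde X+\tilde Y|\bigr]$. The crucial observation is that $\tilde X-\tilde Y$ and $\tilde X+\tilde Y$ are uncorrelated, because both coordinates have unit variance, and hence independent centered Gaussians with variances $2(1-\rho)$ and $2(1+\rho)$; so $\EE\bigl[|\tilde X-\tilde Y|\,|\tilde X+\tilde Y|\bigr]=\sqrt{\tfrac{2}{\pi}\cdot2(1-\rho)}\cdot\sqrt{\tfrac{2}{\pi}\cdot2(1+\rho)}=\tfrac{4}{\pi}\sqrt{1-\rho^2}$. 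This yields $\EE[\tilde X^2\wedge\tilde Y^2]=1-\tfrac{2}{\pi}\sqrt{1-\rho^2}\ge1-\tfrac{2}{\pi}$, hence $h_\Sigma(u,v)\ge(1-\tfrac{2}{\pi})\lambda_{\min}(\Sigma)$ for all unit $v$, and the lower bound in \eqref{eqn:min} follows.

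I expect the only genuinely delicate point to be this last lower bound. The decoupling of $\tilde X\pm\tilde Y$ hinges on both coordinates having unit variance, which is exactly why the normalizing factors $a,b$ must be extracted before the computation; and one must check that the monotonicity reduction really does produce a valid lower bound for every $v$ (rather than tacitly requiring $a=b$ or a sign restriction on $\rho$). The rest is routine algebra and standard Gaussian facts ($\EE|Z|=\sigma\sqrt{2/\pi}$ for $Z\sim\mathsf{N}(0,\sigma^2)$ and the fourth-moment identity).
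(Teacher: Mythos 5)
Your proof is correct, and for the hard part --- the lower bound in \eqref{eqn:min} --- it takes a genuinely different and cleaner route than the paper. The paper works directly with the unnormalized variables $y_1=\langle x,u+v\rangle$ and $y_2=\langle x,u-v\rangle$, which are \emph{correlated}; it therefore needs the full formula $\EE|y_1y_2|=\tfrac{2}{\pi}\big(\sqrt{1-\rho^2}+\rho\arcsin\rho\big)\sigma_1\sigma_2$, an auxiliary Taylor-series estimate $\sqrt{1-t^2}+t\arcsin t\le (\tfrac{\pi}{2}-1)t^2+1$, and then the cosine law, the parallelogram law, and Young's inequality to untangle the resulting expression. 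Your normalization step --- passing to $(\tilde X,\tilde Y)=(X/a,Y/b)$ \emph{before} applying the identity $s\wedge t=\tfrac12(s+t-|s-t|)$ --- forces both coordinates to have unit variance, so $\tilde X-\tilde Y$ and $\tilde X+\tilde Y$ become independent and the $\arcsin$ term vanishes; everything then reduces to $\EE|Z|=\sigma\sqrt{2/\pi}$, and you even obtain the exact closed form $\EE[\tilde X^2\wedge\tilde Y^2]=1-\tfrac{2}{\pi}\sqrt{1-\rho^2}$. The monotonicity reduction $X^2\wedge Y^2\ge\lambda_{\min}(\Sigma)\big((X/a)^2\wedge(Y/b)^2\big)$ is valid for every unit $v$ since $\lambda_{\min}(\Sigma)\le a^2,b^2$ and $\min$ is monotone in each argument. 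The only thing the paper's longer computation buys is the slightly stronger intermediate inequality $h_\Sigma(u,v)\ge(1-\tfrac{2}{\pi})\min\{\langle\Sigma u,u\rangle,\langle\Sigma v,v\rangle\}$, but your argument recovers this too if you pull out $\min\{a^2,b^2\}$ instead of $\lambda_{\min}(\Sigma)$; neither refinement is needed for the stated theorem. Your treatment of \eqref{eqn:product} and of the upper bound in \eqref{eqn:min} matches the paper's (Isserlis/conditioning plus Cauchy--Schwarz, and evaluation at a minimal eigenvector, respectively).
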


Combining Lemma~\ref{lem:basic_except_1_phase}, Theorem~\ref{pse_generalphase}, and Theorem~\ref{thm:compute_for_gauss} directly yields the following estimate on RSE with Gaussian feature data.

\begin{THM}{(RSE for phase retrieval with Gaussian data)}{}
	Consider problems \eqref{eq:phaseretrival_rank1} and  \eqref{eq:phaseretrival}  with Gaussian data $\sD_x= \mathsf{N}(0,\Sigma)$. Then 
	\begin{equation*}
		 \sqrt{(1-\tfrac{2}{\pi})\lambda_{\min}(\Sigma)}\leq {\rm RSE}(\sD) \leq  \sqrt{\lambda_{\min}(\Sigma)}.
	\end{equation*}
	Moreover, if $\mathsf{N}(0,\Sigma)\notin\cE$, then the following estimate holds:
\begin{equation}
  \lb\cdot \frac{1-\tfrac{2}{\pi}}{3\langle \Sigma\betas,\betas\rangle}\leq \RSE(\sD)^2\cdot {\rm REG}(\sD)\leq \ub\cdot \frac{1}{\langle \Sigma\betas,\betas\rangle}
\end{equation}
where $(\lb,\ub)=\big(\tfrac{1}{2}\|\betas\|^2_2,\|\betas\|^2_2\big)$ for problem \eqref{eq:phaseretrival_rank1} and $(\lb,\ub)=(1,1)$ for problem \eqref{eq:phaseretrival}.
\end{THM}

Interestingly, we see a different scaling between $\RSE(\sD)$ and ${\rm REG}(\sD)$ depending on whether $\betas$ aligns with the minimal eigenspace of $\Sigma$. In the regime $\langle\Sigma\betas,\betas\rangle/\|\betas\|_{2}^2 \gg \lambda_{\min}(\Sigma)$, we observe the scaling ${\rm REG}(\sD)\appropto\RSE(\sD)^{-2}$, while in the regime $\langle\Sigma\betas,\betas\rangle/\|\betas\|_{2}^2 \approx \lambda_{\min}(\Sigma)$, we observe ${\rm REG}(\sD)\appropto\RSE(\sD)^{-4}$. Thus, in the latter regime, the distance to the nearest ill-posed problem has a much stronger effect on the hardness of the problem.

\subsection{Bilinear sensing}\label{sections:blind-deconvolution}
The problem of bilinear sensing is an asymmetric analogue of phase retrieval, that is, the ground truth $\Ms=\beta_{1\star}\beta_{2\star}^\top$ and feature data $X=x_1x_2^\top$ are rank-one $d_1\times d_2$ matrices, where the factors $x_1\sim \sD_{x_1}$ and $x_2\sim \sD_{x_2}$ are independent. The standard way to write this problem as stochastic optimization is to minimize the mean squared error over rank-one rectangular matrices:
\begin{equation}\label{eq:phaseretrival_rank1_blind}
	\min_{{M\in \R^{d_1\times d_2}:\, \rank(M)=1}}\,f(M) :=  \mathop{\EE}_{(x_1,x_2,y)\sim\sD}\tfrac{1}{2}\big(x^{\top}_1 M x_2 - y\big)^{2}.
\end{equation}
Throughout, we fix $\Q= \cP_4(\R^{d_1})\times \cP_4(\R^{d_2})$ as the set of allowable feature data distributions. We disregard the factorized formulation with $M = \beta_{1} \beta_{2}^{\top}$ because it results in a continuum of minimizers that are not tilt-stable. This technical difficulty could be circumvented by introducing an additional constraint, such as 
$\|\beta_{1}\| =1$; however, we do not pursue this approach to simplify the exposition.

The following lemma characterizes the set of ill-conditioned problems; see Appendix~\ref{sec:proof_basicillcond} for a proof. For any PSD matrix $\Sigma$, the symbol $\kappa(\Sigma)=\lambda_{\max}(\Sigma)/\lambda_{\min}(\Sigma)$ denotes its condition number.

\begin{lemma}\label{lem:basic_ill_cond}
	The set of ill-conditioned distributions for  \eqref{eq:phaseretrival_rank1_blind} is given by
	\begin{equation*}
		\mathcal{E} = \Big\{\mu\times\nu\in\Q\mid {\rm either }~\EE_{\mu}[xx^{\top}]~{\rm or ~}\EE_{\nu}[xx^{\top}] {\rm ~is~ singular}\Big\}.
	\end{equation*}
	Moreover, for any $\mu\times\nu\in\Q$ with $\Sigma_1:=\EE_{\mu}[x_1x_1^{\top}]$ and $\Sigma_2:=\EE_{\nu}[x_2x_2^{\top}]$, we have
	\begin{equation} \label{eqn:abithard}\frac{2\cdot\min\{\gamma_2\lambda_{\min}(\Sigma_1),\gamma_1\lambda_{\min}(\Sigma_2) \}}{\kappa(\Sigma_{1})\kappa(\Sigma_{2})+1}\leq {\rm REG}(\mu\times \nu)^{-1}\leq \min\{\gamma_2\lambda_{\min}(\Sigma_1),\gamma_1\lambda_{\min}(\Sigma_2) \} \end{equation}
	where $\gamma_i:=\left\langle\Sigma_i\frac{\beta_{i\star}}{\|\beta_{i\star}\|_2},\frac{\beta_{i\star}}{\|\beta_{i\star}\|_2}  \right\rangle$ for $i=1,2$.
\end{lemma}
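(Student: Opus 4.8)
\textbf{Setting up the tilt-stability computation.}
The plan is to directly compute the covariant Hessian of $f$ from \eqref{eq:phaseretrival_rank1_blind} at the ground truth $\Ms=\beta_{1\star}\beta_{2\star}^\top$ on the manifold $\cM=\{M\in\R^{d_1\times d_2}:\rank M=1\}$, and then invoke the intrinsic characterization of tilt-stability from Section~\ref{sec:second-moment}. First I would recall that the tangent space to $\cM$ at a rank-one matrix $M=ab^\top$ is $\cT=\{ub^\top+av^\top:u\in\R^{d_1},v\in\R^{d_2}\}$, i.e.\ the span of the row space of $M$ and the column space of $M$ (in the appropriate sense). Writing a tangent direction as $H=u\beta_{2\star}^\top+\beta_{1\star}v^\top$, the second-order expansion of $f$ along a curve $\gamma_H(t)$ with $\gamma_H(0)=\Ms$, $\dot\gamma_H(0)=H$ has two contributions: the ``pure Hessian'' term $\EE_\mu\EE_\nu (x_1^\top H x_2)^2$ and the curvature/residual term $-\EE[(y-x_1^\top\Ms x_2)\cdot x_1^\top\ddot\gamma_H(0) x_2]$. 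Since the model is well-specified ($y=\langle X,\Ms\rangle+\varepsilon$ with $\varepsilon$ mean zero and independent of $X$), the residual $\EE[y-x_1^\top\Ms x_2\mid x_1,x_2]=0$, so the curvature term vanishes and $\nabla^2_\cM f(\Ms)[H,H]=\EE_\mu\EE_\nu(x_1^\top H x_2)^2$.

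\textbf{Reducing to a quadratic form on the tangent space.}
Next I would expand $\EE[(x_1^\top H x_2)^2]$ with $H=u\beta_{2\star}^\top+\beta_{1\star}v^\top$ and $x_1,x_2$ independent. Using $\EE_\mu x_1x_1^\top=\Sigma_1$, $\EE_\nu x_2x_2^\top=\Sigma_2$, one gets
$$\nabla^2_\cM f(\Ms)[H,H]=\langle\Sigma_1 u,u\rangle\,\langle\Sigma_2\beta_{2\star},\beta_{2\star}\rangle+2\langle\Sigma_1 u,\beta_{1\star}\rangle\langle\Sigma_2 v,\beta_{2\star}\rangle+\langle\Sigma_1\beta_{1\star},\beta_{1\star}\rangle\langle\Sigma_2 v,v\rangle.$$
(One must be slightly careful that the parametrization $(u,v)\mapsto u\beta_{2\star}^\top+\beta_{1\star}v^\top$ of $\cT$ is not injective—there is a one-dimensional redundancy $u\mapsto u+t\beta_{1\star}$, $v\mapsto v-t\beta_{2\star}$—so positive-definiteness must be checked on a complement, e.g.\ restricting $u\perp\beta_{1\star}$ or working modulo the redundancy; this only shifts constants.) From this expression, tilt-stability fails (the form is degenerate) exactly when either $\Sigma_1$ or $\Sigma_2$ is singular: if, say, $\Sigma_1 w=0$ for a unit $w\perp\beta_{1\star}$, then $H=w\beta_{2\star}^\top$ is a nonzero tangent direction in the kernel; conversely if both $\Sigma_i\succ0$ the form is a positive-definite quadratic in $(u,v)$. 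This establishes the stated description of $\cE$.

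\textbf{The two-sided bound on ${\rm REG}$.}
By the intrinsic characterization, ${\rm REG}(\mu\times\nu)^{-1}=\lambda_{\min}$ of the quadratic form above, minimized over unit-norm tangent directions $H\in\cT$ (with $\|H\|_F$ in the Frobenius metric). The upper bound in \eqref{eqn:abithard} comes from plugging in the two test directions $H_1=w_1\beta_{2\star}^\top/\|\beta_{2\star}\|$ with $\Sigma_1 w_1=\lambda_{\min}(\Sigma_1)w_1$, $w_1\perp\beta_{1\star}$ (giving value $\lambda_{\min}(\Sigma_1)\langle\Sigma_2\beta_{2\star},\beta_{2\star}\rangle/\|\beta_{2\star}\|^2=\gamma_2\lambda_{\min}(\Sigma_1)$), and symmetrically $H_2$ giving $\gamma_1\lambda_{\min}(\Sigma_2)$; one also checks $\|H_i\|_F$ is controlled so no extra constant enters, or absorbs it. The lower bound is the crux: I need $\EE[(x_1^\top H x_2)^2]\ge\frac{2\min\{\gamma_2\lambda_{\min}(\Sigma_1),\gamma_1\lambda_{\min}(\Sigma_2)\}}{\kappa(\Sigma_1)\kappa(\Sigma_2)+1}\|H\|_F^2$ for all $H\in\cT$. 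Here $\EE[(x_1^\top H x_2)^2]=\Tr(H^\top\Sigma_1 H\Sigma_2)\ge\lambda_{\min}(\Sigma_1)\lambda_{\min}(\Sigma_2)\|H\|_F^2$ is the easy global bound, but that does not yet give the claimed form; the sharper statement requires exploiting that $H$ lies in the low-dimensional tangent space and relating $\|H\|_F^2$ to the $(u,v)$ coordinates, then lower-bounding the quadratic form in $(u,v)$ by completing the square and using Cauchy–Schwarz on the cross term together with $\lambda_{\min}\le\gamma_i\le\lambda_{\max}$. The $\kappa(\Sigma_1)\kappa(\Sigma_2)+1$ denominator is exactly the loss incurred in passing from the $(u,v)$-coordinate norm to $\|H\|_F$ and in handling the cross term, so I expect the bookkeeping of these constants—keeping track of the redundancy in the parametrization and the Frobenius normalization of $H$—to be the main obstacle, while the conceptual content (vanishing of the residual term, identification of $\cE$) is routine.
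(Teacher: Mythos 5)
Your overall strategy matches the paper's: compute the Hessian of the Lagrangian (equivalently the covariant Hessian, since $\nabla f(\Ms)=0$ makes the multipliers vanish) on the tangent space $\{u\beta_{2\star}^\top+\beta_{1\star}v^\top\}$, read off $\cE$ from degeneracy of the resulting quadratic form, and get the upper bound in \eqref{eqn:abithard} by testing the directions $w_1\beta_{2\star}^\top/\|\beta_{2\star}\|$ and $\beta_{1\star}w_2^\top/\|\beta_{1\star}\|$ with $w_i$ a minimal eigenvector of $\Sigma_i$. (Two small slips there: you should not require $w_1\perp\beta_{1\star}$ --- the minimal eigenvector need not satisfy this, and the test direction is tangent without it; and for membership in $\cE$ when $\Sigma_1 w=0$, again no orthogonality to $\beta_{1\star}$ is needed.) The expansion of the quadratic form in $(u,v)$ and the identification of $\cE$ are correct.

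The genuine gap is the lower bound, which you yourself flag as the crux but do not prove, and the technique you gesture at would fail. Writing $Q(u,v)=\gamma_2'\langle\Sigma_1 u,u\rangle+2\langle\Sigma_1 u,\beta_{1\star}\rangle\langle\Sigma_2 v,\beta_{2\star}\rangle+\gamma_1'\langle\Sigma_2 v,v\rangle$ with $\gamma_i'=\langle\Sigma_i\beta_{i\star},\beta_{i\star}\rangle$, Cauchy--Schwarz gives $|\langle\Sigma_1 u,\beta_{1\star}\rangle|\leq\sqrt{\langle\Sigma_1 u,u\rangle\,\gamma_1'}$, and combined with AM--GM this yields only $Q(u,v)\geq 0$ --- the cross term can exactly cancel the diagonal terms, so ``completing the square and Cauchy--Schwarz'' produces no positive lower bound at all, let alone the constant $2/(\kappa(\Sigma_1)\kappa(\Sigma_2)+1)$. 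The missing idea is that one must exploit the \emph{orthogonality} of the two pieces of the tangent vector: the paper lifts to the Kronecker form $\|A x+ A y\|^2$ with $A=\Sigma_2^{1/2}\otimes\Sigma_1^{1/2}$ and $x=\Vect((a\beta_{1\star}+u)\beta_{2\star}^\top)\perp y=\Vect(\beta_{1\star}v^\top)$, and proves a Wielandt-type inequality $|\langle Ax,Ay\rangle|\leq\bigl(1-\tfrac{2}{\kappa(A)^2+1}\bigr)\|Ax\|\|Ay\|$ for orthogonal $x,y$, whence $\|Ax+Ay\|^2\geq\tfrac{2}{\kappa(A)^2+1}(\|Ax\|^2+\|Ay\|^2)$ with $\kappa(A)^2=\kappa(\Sigma_1)\kappa(\Sigma_2)$. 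Relatedly, your explanation of the $\kappa(\Sigma_1)\kappa(\Sigma_2)+1$ factor as ``the loss in passing from the $(u,v)$-norm to $\|H\|_F$'' is not right: with the orthogonal parametrization that loss is zero ($\|H\|_F^2=a^2+\|u\|^2+\|v\|^2$ after normalizing $\|\beta_{i\star}\|=1$); the entire factor comes from controlling the cross term via the angle condition above.
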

Thus, an application of Theorem~\ref{prop:Sdist} immediately yields the following expression for RSE.

\begin{THM}[label={thm:RSE_bilin}]{(RSE for bilinear sensing)}{}
	Consider the problem \eqref{eq:phaseretrival_rank1_blind} with $\Sigma_1:=\EE_{\sD_{x_1}}[x_1x_1^{\top}]$ and $\Sigma_2:=\EE_{\sD_{x_2}}[x_2x_2^{\top}]$. Then
	\begin{equation}
		{\rm RSE}(\sD) =  {\min}\bigg\{\sqrt{\lambda_{\min}(\Sigma_1)},\sqrt{\lambda_{\min}(\Sigma_2)}\bigg\}.
	\end{equation}
	In particular, if $\sD_{x_1}\times\sD_{x_2}\in\cQ\setminus\cE$, then the following estimate holds:
	$$\frac{\min\{\lambda_{\min}(\Sigma_1),\lambda_{\min}(\Sigma_2)\}}{\min\{\gamma_2\lambda_{\min}(\Sigma_1),\gamma_1\lambda_{\min}(\Sigma_2) \}}\leq \RSE(\sD)^2\cdot {\rm REG}(\sD)\leq C \cdot \frac{\min\{\lambda_{\min}(\Sigma_1),\lambda_{\min}(\Sigma_2)\}}{\min\{\gamma_2\lambda_{\min}(\Sigma_1),\gamma_1\lambda_{\min}(\Sigma_2) \}}$$
	where $\gamma_i:=\left\langle\Sigma_i\frac{\beta_{i\star}}{\|\beta_{i\star}\|_2},\frac{\beta_{i\star}}{\|\beta_{i\star}\|_2} \right\rangle$ for $i=1,2$ and $C:= \frac{\kappa(\Sigma_{1})\kappa(\Sigma_{2})+1}{2}$.
\end{THM}
Similar to phase retrieval, the scaling between RSE and REG for bilinear sensing depends on the simultaneous alignment between $\beta_{i\star}$ and the minimal eigenspace of $\Sigma_{i}$ for $i = 1, 2$. In particular, when $\kappa(\Sigma_{1})\kappa(\Sigma_{2}) \approx 1$ the upper and lower bounds are nearly equal and if, further, $\lambda_{\min}(\Sigma_1) \approx \lambda_{\min}(\Sigma_2),$ then there are two regimes: (\emph{i}) if $ \min\{\gamma_{1},\gamma_{2}\} \gg \lambda_{\min}(\Sigma_{1})$, then ${\rm REG}(\sD) \appropto  {\rm RSE}(\sD)^{-2}$, and (\emph{ii}) if $ \min\{\gamma_{1},\gamma_{2}\} \approx \lambda_{\min}(\Sigma_{1})$, then ${\rm REG}(\sD)  \appropto {\rm RSE}(\sD)^{-4}.$ \revised{It is unclear whether the dependency on the condition numbers of the second-moment matrices appearing in $C$ is tight; we leave this question for future work.}

\subsection{Matrix completion}\label{sec:matrix-completion}
The problem of matrix completion corresponds to the measurement model \eqref{eq:matrix-labels} where the ground truth matrix $\Ms$ has low rank and the feature data matrices $\bX\sim \sD_{X}$ are drawn from a certain discrete distribution. We will focus on the simplified setting where $M_{\star}$ is rank-one and positive semidefinite. The standard way to write this problem as stochastic optimization is
\begin{equation} \label{eq:matrix-completion}
	\min_{M \in \cM} f(M) := \mathop{\EE}_{(X,y)\sim\sD}\tfrac{1}{2}\big(\langle X,M\rangle - y\big)^{2},
\end{equation}
where $\cM:=\{M\in\bm{S}_+^d \mid \rank(M)=1\}$ is the manifold of PSD rank-one $d\times d$ matrices. 
In this section, we will compute the RSE for matrix completion problems of the form \eqref{eq:matrix-completion} in terms of the graph induced by the support of $\sD_{X}$.

We begin with a few observations. First, it is straightforward to verify $\nabla f(M_{\star})=0$ and 
\begin{equation}\label{eqn:hessian_rep}
\nabla^{2} f(M_{\star})[\Delta,\Delta]=\EE\langle X,\Delta\rangle^2\qquad \forall \Delta\in \R^{d\times d}.
\end{equation}
 In particular, the optimal Lagrange multipliers at $M_{\star}$ are zero. Forming the factorization $\Ms=\betas\betas^\top$, the tangent space of $\cM$  at $\Ms$ may be written as
$$\mathcal{T}=\{\betas v^{\top} +v \betas^{\top}\mid v\in \R^d\}.$$
Moreover, an elementary computation shows $\|\Delta\|_F^2/\|\betas\|^2_2 \|v\|^2_2 \in [2,4]$ for all $\Delta = \betas v^{\top} + v \betas^{\top}$ and nonzero $v\in\R^d$. In particular, $\Delta$ is zero if and only if $v$ is zero. Consequently, the set of ill-conditioned distributions for \eqref{eq:matrix-completion} takes the form:
\begin{equation}\label{eqn:exceptional_mc}
\cE=\bigcup_{v\in \R^d\setminus\{0\}}\big\{\mu\in \cP_2(\R^{d \times d}) \mid \supp(\mu) \subset (\beta_\star v^\top + v\betas^\top)^{\perp}\big\}.
\end{equation}
Estimating ${\rm REG}(\mu)$ at any $\mu\in \cP_2(\R^{d \times d})\setminus \cE$ is straightforward, and is the content of the following lemma; see Appendix~\ref{sec:proof_blah} for a proof.

\begin{lemma}\label{lem:blahblah_notneeded}
Consider any measure $\mu\in \cP_2(\R^{d \times d})\setminus \cE$ and define the matrices
$$\Phi_{\betas} := (I \otimes \betas) + (\betas \otimes I)\qquad  \textrm{and}\qquad \Sigma_{\mu} := \EE_{\mu}{\Vect}(X){\Vect}(X)^\top.$$ Then for the problem \eqref{eq:matrix-completion}, the following estimate holds: 
\begin{equation}\label{eqn:reg_modulus}
2\|\beta_{\star}\|^2_2 \leq {\rm REG}(\mu)\cdot\lambda_{\min}(\Phi_{\beta_{\star}}^\top\Sigma_{\mu}\Phi_{\beta_{\star}})\leq 4\|\beta_{\star}\|^2_2.
\end{equation}
\end{lemma}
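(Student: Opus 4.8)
The plan is to reduce the computation of ${\rm REG}(\mu)$ to a minimal-eigenvalue computation via the intrinsic characterization of tilt-stability recalled in Section~\ref{sec:second-moment}, namely ${\rm REG}(\mu)^{-1}=\lambda_{\min}(\nabla^2_{\cM} f(M_\star))$, and then to control that covariant Hessian by the explicit quadratic form in \eqref{eqn:hessian_rep}. First I would recall from the preceding discussion that $\nabla f(M_\star)=0$ and the optimal Lagrange multiplier is zero, so the Lagrangian Hessian coincides with the ambient Hessian of $f$; by \eqref{eqn:hessian_rep} this is the quadratic form $\Delta\mapsto \EE_\mu\langle X,\Delta\rangle^2$. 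Consequently $\nabla^2_{\cM} f(M_\star)[\Delta,\Delta]=\EE_\mu\langle X,\Delta\rangle^2$ for all $\Delta$ in the tangent space $\cT=\{\betas v^\top+v\betas^\top: v\in\R^d\}$, and
$$
{\rm REG}(\mu)^{-1}=\min_{\Delta\in\cT,\ \|\Delta\|_F=1}\ \EE_\mu\langle X,\Delta\rangle^2.
$$

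Next I would pass from $\Delta$ to the generating vector $v$. Writing $\Delta=\betas v^\top+v\betas^\top$, one has in vectorized form $\Vect(\Delta)=\Phi_{\betas}v$, where $\Phi_{\betas}=(I\otimes\betas)+(\betas\otimes I)$, so that $\langle X,\Delta\rangle=\Vect(X)^\top\Phi_{\betas}v$ and hence
$$
\EE_\mu\langle X,\Delta\rangle^2 = v^\top\Phi_{\betas}^\top\Sigma_\mu\Phi_{\betas}v.
$$
The map $v\mapsto\Delta$ is a linear bijection from $\R^d$ onto $\cT$ (injectivity was noted just before the lemma, since $\|\Delta\|_F=0\iff v=0$), so minimizing over unit-Frobenius-norm $\Delta\in\cT$ is the same as minimizing the Rayleigh quotient $v^\top\Phi_{\betas}^\top\Sigma_\mu\Phi_{\betas}v/\|\Delta\|_F^2$ over $v\neq0$. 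Using the already-recorded two-sided bound $\|\Delta\|_F^2\in[2\|\betas\|^2\|v\|^2,\ 4\|\betas\|^2\|v\|^2]$, I would sandwich
$$
\frac{1}{4\|\betas\|^2}\cdot\frac{v^\top\Phi_{\betas}^\top\Sigma_\mu\Phi_{\betas}v}{\|v\|^2}\ \le\ \frac{\EE_\mu\langle X,\Delta\rangle^2}{\|\Delta\|_F^2}\ \le\ \frac{1}{2\|\betas\|^2}\cdot\frac{v^\top\Phi_{\betas}^\top\Sigma_\mu\Phi_{\betas}v}{\|v\|^2},
$$
and take the minimum over $v\neq0$ on all three sides; the middle minimum is exactly ${\rm REG}(\mu)^{-1}$ and the outer ones are $\lambda_{\min}(\Phi_{\betas}^\top\Sigma_\mu\Phi_{\betas})/(4\|\betas\|^2)$ and $\lambda_{\min}(\Phi_{\betas}^\top\Sigma_\mu\Phi_{\betas})/(2\|\betas\|^2)$ respectively. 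Rearranging gives \eqref{eqn:reg_modulus}. I would also note that $\mu\notin\cE$ guarantees $\Phi_{\betas}^\top\Sigma_\mu\Phi_{\betas}\succ0$ (equivalently $\supp(X)\not\subset(v\betas^\top)^\perp$ for every $v\neq0$), so ${\rm REG}(\mu)<\infty$ and $M_\star$ is genuinely tilt-stable, which is what makes the displayed identity meaningful.

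The only genuinely delicate point is the constant $\|\Delta\|_F^2/(\|\betas\|^2\|v\|^2)\in[2,4]$, which the excerpt asserts without proof; I would verify it by expanding $\|\betas v^\top+v\betas^\top\|_F^2=2\|\betas\|^2\|v\|^2+2\langle\betas,v\rangle^2$ and using Cauchy–Schwarz for the upper bound (equality when $v\parallel\betas$) and nonnegativity of the cross term for the lower bound (equality when $v\perp\betas$). Everything else is bookkeeping: the identification of the covariant Hessian with the ambient Hessian (legitimate because the multiplier vanishes and $\cE$ excludes degeneracy), the vectorization identity for $\Phi_{\betas}$, and the elementary sandwich of Rayleigh quotients. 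I expect no real obstacle beyond being careful that the linear map $v\mapsto\betas v^\top+v\betas^\top$ is a bijection onto $\cT$, so that the two minimizations are over matching sets.
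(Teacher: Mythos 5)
Your proposal is correct and follows essentially the same route as the paper: vectorize $\Delta=\betas v^{\top}+v\betas^{\top}$ as $\Phi_{\betas}v$, identify $\nabla^2 f(M_{\star})[\Delta,\Delta]=v^{\top}\Phi_{\betas}^{\top}\Sigma_{\mu}\Phi_{\betas}v$ via \eqref{eqn:hessian_rep} (with vanishing multipliers), and sandwich the Rayleigh quotient using $\|\Delta\|_F^2=2\langle\betas,v\rangle^2+2\|\betas\|^2\|v\|^2\in[2,4]\cdot\|\betas\|^2\|v\|^2$. The paper's proof is terser—it simply says ``minimizing over $\|\Delta\|_F=1$ yields the guarantee''—so your added details (the norm expansion, the bijectivity of $v\mapsto\Delta$, and the positive-definiteness granted by $\mu\notin\cE$) only make the argument more complete.
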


Observe now that most measures $\mu\in\cP_2(\R^{d \times d})$ do not correspond to a matrix completion problem, for $\mu$ may not even be discrete. With this in mind, we now specify a set of admissible distributions $\Q$ that encodes matrix completion problems of the form \eqref{eq:matrix-completion}. We will represent each such distribution as a symmetric matrix as follows. In \eqref{eq:matrix-completion}, we take $X$ to be a discrete random matrix supported on the set of canonical basis matrices $e_ie_j^\top$ together with the zero matrix; further, since each $M\in\cM$ is symmetric, we suppose without loss of generality that the distribution of $X$ is symmetric in the sense that $\PP\{X = e_i e_j^\top\} = \PP\{X = e_j e_i^\top\}$ for all $i,j\in \lbr d \rbr$. Thus, the distribution of $X$ may be represented by the symmetric matrix of probabilities
\begin{equation*}
	\bP = (p_{ij}) \quad \text{where} \quad
	p_{ij} = {\PP}\big\{X = e_{i}e_{j}^\top\big\}.
\end{equation*}
We denote the set of all such matrices by
\begin{equation*}
	\cQ := \big\{ (p_{ij}) \in \bS^d \mid \textstyle\sum_{i, j} p_{ij} \leq 1 \text{ and } p_{ij} \geq 0 \text{ for all } i, j \in \lbr d \rbr \big\}.
\end{equation*}
\revised{Note that we allow for $\sum_{i,j} p_{ij} < 1$, meaning that the zero matrix may be sampled with positive probability, in which case no entries are observed.
For each $P =(p_{ij})$ in $\cQ$ , we let $\mu_{P} \in \cP_2(\R^{d \times d})$ denote the corresponding discrete distribution given by
$$\mu_{P}(0) = 1 - \sum_{i,j} p_{ij}\quad \text{ and } \quad \mu_{P}\big(e_ie_j^\top\big) = p_{ij}.$$}
Then the set of ill-conditioned distributions for matrix completion problems of the form \eqref{eq:matrix-completion} is
\begin{equation*}
	\cE^{\text{mc}} :=  \{\mu_{P} \mid P \in \cQ \} \cap \cE,
\end{equation*}
where $\cE$ is given by \eqref{eqn:exceptional_mc}.
We are now ready to study the Wasserstein-$2$ distance from $\mu_P$ to $\cE^{\textrm{mc}}$. 

We will see that the computation of ${\rm RSE}(\mu_{P})$ relies on the combinatorial structure of the matrix indices that are observed in the problem. To describe this structure, we will require some additional notation. To begin, we let $S^{2}(\lbr d \rbr)$ denote the set of symmetric subsets of $\lbr d \rbr\!\times\!\lbr d \rbr$, and for any $A\in S^{2}(\lbr d \rbr)$   we let $G_{A}$ denote the undirected graph $(\lbr d \rbr, E_A)$ where $E_A := \big\{ \{i,j\} \mid (i,j)\in A\big\}$ is the edge set induced by $A$. In particular, $E_{\supp(P)} = \big\{ \{i,j\} \mid p_{ij}>0 \big\}$ and hence the graph $G_{\supp(P)}$ encodes the observed indices in the matrix completion problem. Next, we let
\begin{equation}\label{eq:G-ast}  G^{*}_A = (V^{*}, E^{*}_A) \text{ be the subgraph of $G_A$ induced by }V^{*} := \supp(\betas),\end{equation}
meaning that $E^{*}_A$ consists of all the edges in $E_A$ between vertices in $V^{*}$, and we define
\begin{equation}\label{eq:sad-nodes}
	V^{0}_A := \big\{ i \in \lbr d \rbr\setminus V^{*}\mid \{i,j\} \notin E_A \text{ for all } j \in V^{*}\big\}.
\end{equation}
Thus, $V^{0}_A$ is comprised of the vertices $i$ with ${\beta_{\star}}_{i} = 0$ that are not adjacent in $G_A$ to any vertex $j$ for which ${\beta_{\star}}_{j} \neq 0$. 

With the preceding notation in hand, we may now describe precisely the combinatorial structure that characterizes well-posed instances of matrix completion.

\begin{assumption}\label{eq:mc-combinatorial-char}
	For a given $A\in S^{2}(\lbr d \rbr)$ with $G^{*}_A$ and $V^{0}_A$ as defined in \eqref{eq:G-ast} and \eqref{eq:sad-nodes}, respectively, assume that the graph $G_A$ is such that the following two conditions hold:
	\begin{enumerate}
		\item (\textbf{No bipartite components of $G_A^{*}$}) \label{it:1} Every connected component of $G^{*}_A$ is non-bipartite.
		\item (\textbf{No isolated zeros of $\betas$}) \label{it:2} The set $V^{0}_A$ is empty.
	\end{enumerate}
\end{assumption}

We will see that the set
$$\Omega_{\beta_{\star}} := \big\{ A \in S^{2}(\lbr d \rbr) \mid \text{$G_A$ does not satisfy Assumption~\ref{eq:mc-combinatorial-char}}\big\}$$
plays a crucial role in computing ${\rm RSE}(\mu_{P})$. We are now ready to state the main result of this section; see Appendix~\ref{sec:proof_rsemc} for a proof.
\begin{THM}[label={thm:rse_MC}]{(RSE for matrix completion)}{}
	Let $\bP = (p_{ij})$ and $M_{\star} = \betas\betas^{\top}$ be the data for a matrix completion problem of the form \eqref{eq:matrix-completion} with $X\sim \mu_{P}$. Then $\mu_{P}$ is well-posed, i.e., $\mu_{P} \notin \cE^{{\rm mc}}$, if and only if the graph $G_{\supp(P)}$ satisfies Assumption~\ref{eq:mc-combinatorial-char}, i.e., $\supp(P)\notin\Omega_{\betas}$. Additionally, the following identity holds:
	\begin{equation}
		\label{eq:mat-comp-hard-dist}
		{\rm RSE}(\mu_{P})^{2} = \min_{\substack{A \in \Omega_{\betas}\\ A\subset\supp(P)}}\sum_{(i,j)\in \supp(\bP) \setminus A} p_{ij}.
	\end{equation}
	Moreover, computing ${\rm RSE}(\mu_{P})$ is NP-hard in general.
\end{THM}

Thus, Theorem~\ref{thm:rse_MC} shows
\begin{equation}
	\cE^{\text{mc}} = \big\{\mu_{P} \mid P\in\cQ~\text{and}~{\supp}(P) \in \Omega_{\betas}\big\}
\end{equation}
and that one can compute ${\rm RSE}(\mu_{P})$ by enumerating over all symmetric subsets $A\subset \supp(\bP)$ such that either $G^*_A$ has a bipartite connected component or  $V^0_{A}$ is nonempty.
Then any such subset $A$ for which the mass $\sum_{(i, j) \in \supp(\bP) \setminus A} p_{ij}$ is smallest yields ${\rm RSE}(\mu_{P})^2$. Interestingly, computing  ${\rm RSE}(\mu_{P})$ is NP-hard in general, which follows from a polynomial-time reduction from the \texttt{MaxCut} problem.

\section{Conclusion}\label{sec:conclusions}
In this work, we introduced a new measure of robustness---{\em radius of statistical efficiency (RSE)}---for problems of statistical inference and estimation. We computed RSE for a number of testbed problems, including principal component analysis, generalized linear models, phase retrieval, bilinear sensing, and matrix completion. In all cases, we verified a precise reciprocal relationship between RSE and the intrinsic complexity/sensitivity of the problem instance, thereby paralleling the classical Eckart–Young theorem and its numerous extensions in numerical analysis and optimization. More generally, we obtained sufficient conditions for such a relationship to hold that depend only on local information (gradients, Hessians), rather than an explicit description of the set of ill-conditioned distributions.  We believe that this work provides an intriguing new perspective on the interplay between problem difficulty, solution sensitivity, and robustness in statistical inference and learning.

\subsection*{Acknowledgments}
We thank John Duchi, Jorge Garza-Vargas, Zaid Harchaoui, and Eitan Levin for insightful conversations during the development of this work. We also thank the anonymous reviewers for their useful feedback, especially for their improvement to the proof of Lemma~\ref{lem:basic_scie}.


\bibliographystyle{abbrvnat}
\bibliography{biblio}
\appendix

\section{Geometry of the Wasserstein space and distance estimation}\label{sec:wasserstein_geometry_review}
In this section, we introduce the necessary background of the Wasserstein geometry  and prove a number of results that may be of independent interest; in the subsequent section, we will use many of these results to prove estimates on RSE announced in the paper. We follow standard notation of optimal transport, as set out for example in the monographs of Villani \cite{Villani2009} and Santambrogio \cite{santambrogio2015optimal}.

Let $(\cX,\sd)$ be a  separable complete metric space equipped with its Borel $\sigma$-algebra $\cB_{\cX}$. \revised{The primary example for us will be a Euclidean space $(\mathbf{E},\|\cdot\|)$, specifically $\R^d$ or $\R^{m\times n}$.} The distance of a point $x\in \cX$ to a set $\cK\subset\cX$ will be denoted by $\dist(x,\cK)=\inf_{x'\in \cK} \sd(x,x')$.
The set of Borel probability measures on $\cX$ is denoted by $\cP(\cX)$, and will be abbreviated as $\cP$ if the space $\cX$ is clear from context. The support of a measure $\mu\in\cP$, written as $\Supp(\mu)$, is the smallest closed set $C\subset\cX$ such that the complement $\cX\setminus C$ is of zero $\mu$-measure. 
For any measurable map $T\colon(\Omega,\cF,\PP)\rightarrow(\cX,\cB_{\cX})$, the pushforward measure  $T_{\#}\PP$ is defined by setting
$$(T_{\#}\PP)(B)=\PP(T^{-1}(B))\qquad \text{for all } B\in \cB_{\cX}.$$ The support of the random variable $T$ in $\cX$ is the support of its distribution $T_{\#}\PP$.

 For any $p\geq 1$, the symbol $\cP_p$ denotes the set of all distributions $\mu$ on $\cX$ with finite $p^{\text{th}}$ moment, meaning $\EE_{x\sim\mu} \sd(x,x_0)^p < \infty$ for some (and hence any) $x_0\in \cX$.    
 The Wasserstein-$p$ distance between two measures $\mu,\nu\in \cP_p$ is given by 
\begin{equation*}
	W_p(\mu, \nu) =\min_{\pi \in \Pi(\mu,\nu)}\bigg( \mathop{\EE}_{(x,y)\sim \pi}\sd(x,y)^p \bigg)^{1/p}.
\end{equation*}
Here, the set $\Pi(\mu, \nu)$ is comprised of the couplings between $\mu$ and $\nu$, i.e., distributions on $\cX \times \cX$ having $\mu$ and $\nu$ as its first and second marginals. An important fact is that the pair $(\cP_p,W_p)$ is a separable complete metric space in its own right and is called the Wasserstein-$p$ space on $\cX$.

We will need a few basic estimates on the $W_p$ distance. First, consider any measures $\mu,\nu\in\cP_p$ and a measurable map $T\colon\cX\to\cX$ satisfying $\nu = T_{\#}\mu$. Then the law of the random variable $(x,T(x))$, with $x\sim\mu$, is a coupling between $\mu$ and $\nu$ and therefore the estimate holds:
\begin{equation}\label{eq:detcouplingupperbound}
W_p^p(\mu, \nu)\leq \mathop{\EE}_{x\sim \mu} \sd(x, T(x))^{p}.
\end{equation}
Another useful observation is that for any measures $\mu,\nu\in\cP_p$ such that the support of $\nu$ is contained in the closure of a set $\cK\subset\cX$, the estimate holds:
\begin{equation}\label{eqn:reverse_ineq}
W_{p}^p(\mu, \nu)\geq \mathop{\EE}_{x\sim \mu}\dist(x,\cK)^p.
\end{equation}
Consequently, equality holds in \eqref{eq:detcouplingupperbound} when $T$ is a projection---the content of the following lemma.

\begin{lemma}[$W_p$ metric \&  projections]\label{lem:w2_proj}
Consider a measure $\mu\in\cP_p$ and a set $\cK\subset\cX$. Suppose that the metric projection $\sP_{\cK}$ admits a measurable selection $s\colon\cX\rightarrow\cK$. Then equality holds:
\begin{equation}\label{eq:detcouplingequality}
	W_{p}^p(\mu, s_{\#}\mu) = \mathop{\EE}_{x\sim \mu}\dist(x, \cK)^{p}.
\end{equation}
\end{lemma}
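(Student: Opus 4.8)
The plan is to establish the two inequalities separately, combining the general-purpose bounds \eqref{eq:detcouplingupperbound} and \eqref{eqn:reverse_ineq} already recorded in the excerpt. For the upper bound, I would take $T = s$, the given measurable selection of the metric projection $\sP_{\cK}$. Since $s_{\#}\mu$ is by construction the pushforward of $\mu$ under $s$, inequality \eqref{eq:detcouplingupperbound} immediately gives
$$
W_p^p(\mu, s_{\#}\mu) \;\leq\; \mathop{\EE}_{x\sim\mu}\sd(x, s(x))^p \;=\; \mathop{\EE}_{x\sim\mu}\dist(x,\cK)^p,
$$
where the final equality holds because $s(x) \in \sP_{\cK}(x)$ means $\sd(x,s(x)) = \dist(x,\cK)$ for every $x$.

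For the reverse inequality, I would apply \eqref{eqn:reverse_ineq} with $\nu = s_{\#}\mu$ and $C = \cl(s(\cX))$ (or, more simply, observe that $\Supp(s_{\#}\mu)$ is contained in the closure of the range of $s$, which lies in $\cl(\cK)$). Since the metric projection takes values in $\cK$, we have $\Supp(s_{\#}\mu)\subset \cl(\cK)$, and $\dist(x,\cl(\cK)) = \dist(x,\cK)$, so \eqref{eqn:reverse_ineq} yields
$$
W_p^p(\mu, s_{\#}\mu) \;\geq\; \mathop{\EE}_{x\sim\mu}\dist^p\big(x,\Supp(s_{\#}\mu)\big) \;\geq\; \mathop{\EE}_{x\sim\mu}\dist^p(x,\cK).
$$
Putting the two inequalities together gives the claimed equality \eqref{eq:detcouplingequality}.

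The only genuine subtlety — and the step I would flag as the place to be careful — is measurability: one must confirm that $s$ is Borel measurable (this is a hypothesis), that $x\mapsto \dist(x,\cK)$ is measurable (it is $1$-Lipschitz, hence continuous), and that the integrand $\sd(x,s(x))^p$ is measurable so the expectations are well defined; integrability follows since $\dist(x,\cK)\le \sd(x,x_0)+\dist(x_0,\cK)$ for any fixed $x_0\in\cK$, and $\mu\in\cP_p$. A secondary point worth stating explicitly is that $s_{\#}\mu \in \cP_p$, which again follows from the same bound on $\dist(x,\cK)$, so that $W_p(\mu, s_{\#}\mu)$ is finite and the Wasserstein distance is legitimately defined. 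None of this requires new machinery beyond what the excerpt has already set up; the proof is essentially a two-line sandwich once the bookkeeping is in place.
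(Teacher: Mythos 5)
Your proof is correct and matches the paper's argument essentially verbatim: the paper likewise first checks that $s_{\#}\mu\in\cP_p$ via the triangle inequality $\sd(s(x),x_0)\leq 2\sd(x,x_0)$ for $x_0\in\cK$, and then sandwiches $W_p^p(\mu,s_{\#}\mu)$ between \eqref{eq:detcouplingupperbound} applied to $T=s$ and \eqref{eqn:reverse_ineq} applied to the support of $s_{\#}\mu$. The measurability and integrability bookkeeping you flag is exactly the content of the paper's first step, so there is nothing to add.
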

\begin{proof}
We first show that $\nu:=s_{\#}\mu$ lies in $\cP_p$. Indeed, for any $x_0\in \cK$ and $x\in\cX$ we have
$$\sd(s(x), x_0)\leq \sd(s(x), x)+\sd(x, x_0)\leq  2\sd(x, x_0),$$
and therefore $\nu$ lies in $\cP_p$. Next, taking into account that the support of $\nu$ is contained in the closure of $\cK$, combining \eqref{eq:detcouplingupperbound} and \eqref{eqn:reverse_ineq} completes the proof.
 \end{proof}

For the rest of the section, we will focus exclusively on the setting where $\cX$ is the standard $d$-dimensional Euclidean space $\R^d$ equipped with the dot product $\langle x, y \rangle = x^{\top}y$ and the induced norm $\|x\|_2=\sqrt{\langle x,x\rangle}$. We denote the second-moment matrix of any measure $\mu\in \cP_2$ by the symbol 
$$\Sigma_{\mu}:=\mathop{\EE}_{x\sim \mu} xx^\top.$$
Given a set of measures $\cV\subset\cP_2$, the distance function $W_2(\cdot, \mathcal{V})$ is defined in the usual way:
$$W_2(\mu, \mathcal{V})=\inf_{\nu\in \mathcal{V}}\, W_2(\mu,\nu).$$

Although estimating the distance function is difficult in general, we will focus on well-structured sets $\mathcal{V}$ for which $W_2(\mu, \mathcal{V})$ can be readily computed. The following two sections study, respectively, measures constrained either by the location of its support or by the spectrum of its covariance.
\subsection{Sets of measures constrained by their support}
Consider a linear subspace $\cK \subset \cX$ and a measure $\mu\in\cP_2$. Observe that the compression $\Sigma_{\mu}\vert_{\cK}$ of $\Sigma_{\mu}$ to $\cK$ is the positive semidefinite operator on $\cK$ with  quadratic form given by
\begin{equation}\label{eqn:basic_eqn_cov}
\langle \Sigma_{\mu} v, v \rangle = \mathop{\EE}_{x\sim\mu}\langle x, v \rangle^2\qquad \forall v\in\cK.
\end{equation}
We will need the following lemma that provides a convenient interpretation of the trace of $\Sigma_{\mu}\vert_{\cK}$. 

	\begin{lemma}[Trace of second moment $\&$ $W_2$ distance]\label{lem:dist-trace}
		Consider a measure $\mu\in\cP_2$  and let $\cK$ be a linear subspace of $\R^d$. Then	 the following equalities hold:
$$			{\Tr}\,\Sigma_{\mu}\vert_{\cK} = \mathop{\EE}_{\x\sim \mu}\dist(\x,\cK^{\perp})^2 = W_2^2(\mu, (\sP_{\cK^{\perp}})_{\#}\mu).$$
	\end{lemma}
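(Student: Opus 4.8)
The plan is to establish the two equalities separately, reading from left to right. First I would unpack the definition of the compression: writing $P$ for the orthogonal projector onto $\cK$, we have $\Sigma_\mu\vert_{\cK} = P\Sigma_\mu P$ as a map on $\cK$, so that $\Tr\,\Sigma_\mu\vert_{\cK} = \Tr(P\Sigma_\mu P) = \Tr(P\Sigma_\mu)$ by cyclicity and idempotency of $P$. Now $\Tr(P\Sigma_\mu) = \EE_{x\sim\mu}\Tr(P x x^\top) = \EE_{x\sim\mu}\langle Px, x\rangle = \EE_{x\sim\mu}\|Px\|_2^2$, using $P = P^\top = P^2$ once more. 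The key geometric observation is that for any $x\in\R^d$, the orthogonal decomposition $x = Px + (I-P)x$ along $\cK\oplus\cK^\perp$ gives $\|Px\|_2 = \dist(x,\cK^\perp)$, since $(I-P)x$ is precisely the metric projection of $x$ onto $\cK^\perp$. Substituting this identity inside the expectation yields the first equality $\Tr\,\Sigma_\mu\vert_{\cK} = \EE_{x\sim\mu}\dist(x,\cK^\perp)^2$.

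For the second equality, I would invoke Lemma~\ref{lem:w2_proj} with $\cK$ there taken to be the closed set $\cK^\perp$ (a linear subspace, hence closed and convex). The metric projection onto a subspace is the single-valued linear map $I-P$, which is in particular Borel measurable, so the hypothesis of Lemma~\ref{lem:w2_proj} on the existence of a measurable selection is satisfied with $s = (I-P) = \sP_{\cK^\perp}$. The lemma then gives $W_2^2(\mu, (\sP_{\cK^\perp})_{\#}\mu) = \EE_{x\sim\mu}\dist(x,\cK^\perp)^2$, which is exactly the middle quantity already computed. Combining the two displays completes the proof. One should also note in passing that $\mu\in\cP_2$ guarantees the expectation $\EE\|x\|_2^2$ is finite, so all the quantities above are finite and the manipulations are legitimate; and the hypothesis that $\cK$ is proper ensures $\cK^\perp\neq\{0\}$, though in fact the argument goes through formally even without it.

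The main (and only mild) obstacle is bookkeeping about which subspace plays which role: the compression is defined via $\cK$, but the relevant distance and pushforward are with respect to $\cK^\perp$, so one must be careful that $\|Px\|_2 = \dist(x,\cK^\perp)$ rather than $\dist(x,\cK)$. Beyond this the proof is a short chain of standard trace identities together with a direct appeal to Lemma~\ref{lem:w2_proj}; no variational principle or optimal-transport machinery beyond what is already recorded in this section is needed. I would present it as a two-line computation plus a one-line citation.
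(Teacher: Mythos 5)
Your proposal is correct and follows essentially the same route as the paper's proof: expand the trace of the compression to $\EE_{x\sim\mu}\|P_{\cK}x\|_2^2$, identify $\|P_{\cK}x\|_2$ with $\dist(x,\cK^{\perp})$, and conclude the second equality by applying Lemma~\ref{lem:w2_proj} to the subspace $\cK^{\perp}$. No gaps; the extra remarks on integrability and measurability are fine but not needed beyond what Lemma~\ref{lem:w2_proj} already supplies.
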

	\begin{proof}
		We successively compute
		\begin{align*}
			{\Tr}\,\Sigma_{\mu}\vert_{\cK} = \mathop{\EE}_{x\sim \mu} {\Tr}(\sP_{\cK} xx^{\top} \sP_{\cK}) & = \mathop{\EE}_{x\sim \mu}\|\sP_{\cK}x\|^2_2
			= \mathop{\EE}_{x\sim \mu}\dist(x,\cK^{\perp})^2
			= W_2^2(\mu, (\sP_{\cK^{\perp}})_{\#}\mu),
		\end{align*}
		where the last equality follows from Lemma~\ref{lem:w2_proj}. 
	\end{proof}

In light of \eqref{eqn:basic_eqn_cov}, the matrix $\Sigma_{\mu}\vert_{\cK}$ is singular if and only if the inclusion $\Supp(\mu)\subset v^{\perp}$ holds for some $v\in\cK\cap\mathbb{S}^{d-1}$. Define the set of measures $\nu\in\cP_2$ for which $\Sigma_{\nu}\vert_{\cK}$ is singular:
\begin{equation}\label{eq:singular_set}
	\cV_{\cK} 	  := \big\{\nu\in\mathcal{P}_2 \mid \Supp(\nu)\subset v^{\perp}\text{~for some~} v\in\cK\cap\mathbb{S}^{d-1} \big\}. \end{equation}
The following theorem---the main result of this section---shows that the squared $W_2$ distance of $\mu$ to $\cV_{\cK}$ is simply the minimal eigenvalue of $\Sigma_{\mu}\vert_{\cK}$.

\begin{THM}[label={prop:Sdist}]{(Distance to $\cV_{\cK}$)}{}
	Consider a measure $\mu\in \cP_2$ and a nontrivial linear subspace $\cK$ of $\R^d$. Then equality holds:
	\begin{equation*}
		\label{eq:1}
		 W_2^2(\mu, \cV_{\cK}) = \lambda_{\min}(\Sigma_{\mu}\vert_{\cK}),
	\end{equation*}
	where $\cV_{\cK}$ is defined in \eqref{eq:singular_set}.
	 Moreover, the $W_2$ distance from $\mu$ to $\cV_{\cK}$ is attained by the measure $(\sP_{v^{\perp}})_{\#}\mu$, where  $v\in \cK$ is any eigenvector of $\Sigma_{\mu}\vert_{\cK}$ corresponding to its minimal eigenvalue.
\end{THM}

	\begin{proof}
For any vector $v\in \cK\cap \mathbb{S}^{d-1}$, applying Lemma~\ref{lem:dist-trace} with ${\rm span}(v)$ in place of $\cK$ yields
\begin{equation}\label{eqn:extress_eig}
\langle \Sigma_{\mu}v, v \rangle={\Tr}\,\Sigma_{\mu}\vert_{{\rm span}(v)} = W_2^2(\mu, (\sP_{v^{\perp}})_{\#}\mu).
\end{equation}
Let us now decompose $\cV_{\cK}$ into a union of simpler sets:
$$\cV_{\cK}=\bigcup_{v\in \cK\cap \mathbb{S}^{d-1}} L_{v}\qquad \textrm{where}\qquad L_v:=\{\nu\in\cP_2 \mid \supp(\nu)\subset v^{\perp}\}.$$
We now estimate
$$W^2_2(\mu,L_v)\leq W^2_2(\mu, (\sP_{\bv^{\perp}})_{\#}\mu)=\mathop{\EE}_{\x\sim \mu}\dist(x,v^{\perp})^2\leq W^2_2(\mu,L_v),$$
where the first inequality holds trivially, the equality follows from Lemma~\ref{lem:w2_proj}, and 
the last inequality follows from \eqref{eqn:reverse_ineq} with $v^\perp$ in place of $\cK$. Thus equality holds throughout. 
Using \eqref{eqn:extress_eig}, we then conclude
$$W^2_2(\mu, \cV_{\cK})=\inf_{v\in \cK\cap \mathbb{S}^{d-1}} W^2_2(\mu,L_v)=\inf_{v\in \cK\cap \mathbb{S}^{d-1}}\langle \Sigma_{\mu}v, v \rangle= \lambda_{\min}(\Sigma_{\mu}\vert_{\cK}),$$
as claimed. It also follows immediately that for any minimal eigenvector $\bv$ of $\Sigma_{\mu}\vert_{\cK}$, the pushforward measure $(\sP_{\bv^{\perp}})_{\#}\mu$ attains the $W_2$ distance from $\mu$ to $\cV_{\cK}$.
\end{proof}

\subsection{Spectral sets and functions of measures}\label{sec:spec_funct}
In this section, we investigate a special class of functions on $\cP_2^{\circ}$---called spectral---that depend on the measure only through the eigenvalues of its covariance matrix. This function class has close  analogues in existing literature on matrix analysis and eigenvalue optimization. We postpone a detailed discussion of the related literature until the end of the section.

For any matrix $A\in \bm{S}^d$, let $\lambda(A) = (\lambda_1(A), \ldots, \lambda_d(A))$ denote the vector of eigenvalues of $A$ arranged in nonincreasing order. The following is the key definition.

\begin{definition}[Spectral functions of measures]
A function $F\colon\cP_2^{\circ} \to \R\cup\{+\infty\}$ is called {\em spectral} if for any $\mu,\nu\in\cP_2^{\circ}$ satisfying $\lambda(\Sigma_{\mu})=\lambda(\Sigma_{\nu})$, the equality $F(\mu)=F(\nu)$ holds.
\end{definition}

A good example to keep in mind is the Schatten $q$-norm $F(\mu)=\|\lambda(\Sigma_{\mu})\|_{q}$ for any $q\in [1,\infty]$ (where $\|\cdot\|_q$ denotes the $\ell_q$ norm on $\R^d$). Notice that in this example $F$ factors as a composition of the eigenvalue map $\lambda(\cdot)$ and the permutation-invariant function $f(v)=\|v\|_q$ on $\R^d$. Evidently, all spectral functions arise in this way. 

\begin{definition}[Symmetric functions]
A function $f\colon\R^d_+\to \R\cup\{+\infty\}$ is called {\em symmetric} if the equality $f(s(x))=f(x)$ holds for all $x\in\R^d_+ $ and all permutations of coordinates $s(\cdot)$.
\end{definition}

An elementary observation is that a function $F\colon\cP_2^{\circ} \to \R\cup\{+\infty\}$ is spectral if and only if there exists a symmetric function $f:\R^d_+\to \R\cup\{+\infty\}$ satisfying
$$F(\mu)=f(\lambda(\Sigma_{\mu}))\qquad \forall \mu\in\cP_2^{\circ}.$$
Concretely, the symmetric function $f$ can be obtained from $F$ by restricting to Gaussian measures with diagonal covariance, i.e., $f(v)=F(\mathsf{N}(0,\Diag(v)))$. 
The definition of spectral and symmetric functions easily extends to sets through indicator functions. Namely, a set $\cG\subset\cP_2^{\circ}$ is  {\em spectral} if the indicator function $\delta_{\cG}$ is spectral, while a set $G\subset\R^d_+$ is {\em symmetric} if the indicator  $\delta_{G}$ is symmetric.

An interesting example of a spectral set is given by 
\begin{equation}\label{eqn:int_set}
\cG_q:=\{\mu\in\cP_2^\circ \mid \lambda_q(\Sigma_{\mu})= \lambda_{q+1}(\Sigma_{\mu})\}.
\end{equation}
Although complicated, we may represent $\cG_q$ with the symmetric set 
$$G_q:=\{v\in \R^d_+ \mid v^{(q)}=v^{(q+1)}\},$$
where $v^{(i)}$ is the $i$'th largest coordinate of $v$. Indeed, equality 
$\cG_q=\{\mu\in \cP_2^\circ \mid \lambda(\Sigma_{\mu})\in G_q\}$ holds.

In this section, we will show that for any symmetric set $G\subset\R^d_+$, one can express the $W_2$ distance to the corresponding spectral set $\cG=\{\nu\in \cP_2^\circ \mid \lambda(\Sigma_{\nu})\in G\}$ purely in terms of the Hellinger distance to the much simpler set $G$. Indeed, we will prove the following theorem, which specialized to example \eqref{eqn:int_set} yields the expression
$$W_2(\mu,\cG_q)={\dist}_2\Big(\sqrt{\lambda(\Sigma_{\mu})},G_q\Big)=\tfrac{1}{\sqrt{2}}\Big(\sqrt{\lambda_{q}}-\sqrt{\lambda_{q+1}}\Big).$$

\begin{THM}[label={thm:distance_est_spectral}]{(Distance to spectral sets in $\mathcal{P}_2^{\circ}$)}{}
Let $G\subset\R^d_+$ be a symmetric set, and consider the corresponding spectral set of measures
$$\cG:=\{\nu\in \cP_2^\circ \mid \lambda(\Sigma_{\nu})\in G\}.$$
Then for any $\mu\in \mathcal{P}_2^\circ$, equality holds: 
$$W_2(\mu,\cG)=\min_{v\in G}\Big\|\sqrt{\lambda(\Sigma_{\mu})}-\sqrt{v}\Big\|_2.$$
\end{THM}

In fact, we will prove a more general statement that applies to functions, with the distance replaced by the so-called Moreau envelope. We need some further notation to proceed. Let $(\cY,\sd)$ be a metric space and consider a function $f\colon\cY\to\R\cup\{+\infty\}$. Then for any parameter $\rho>0$, the {\em Moreau envelope} and the {\em proximal map} of $f$ \cite{moreau1965proximite,de1993new}, respectively, are defined as:
\begin{align*}
	f_{\rho}(y)&:=\inf_{y'\in \cY}~f(y')+\tfrac{1}{2\rho}\sd^2(y,y'),\\
	\prox_{\rho f}(y)&:=\argmin_{y'\in \cY}~f(y')+\tfrac{1}{2\rho}\sd^2(y,y').
\end{align*}
In particular, if $f$ is an indicator function of a set $Q$, then $f_{\rho}$ is proportional to the squared distance function to $Q$, while $\prox_{\rho f}$ reduces to the nearest-point projection onto $Q$.

We will be interested in three metric spaces and it is important to keep the metric in mind in all results that follow.
\begin{itemize}
\item  $(\cP_2^{\circ},W_2)$ The space $\mathcal{P}_2^{\circ}(\R^d)$ equipped with the  Wasserstein-2 distance $$W_2^2(\mu,\nu) =\min_{\pi \in \Pi(\mu,\nu)}\mathop{\EE}_{(x,y)\sim \pi}\|x-y\|_2^2.$$
\item $(\bS^d_{+},W_2)$ The cone of PSD matrices $\bS^d_{+}$ equipped with the Bures-Wasserstein distance
$$W^2_2(A,B)={\tr}\big(A + B - 2(A^{1/2}BA^{1/2})^{1/2}\big).$$
\item $(\R^d_+, W_2)$ The cone of nonnegative vectors $\R^d_+$ equipped with the Hellinger distance
$$W_2(x,y)=\|\sqrt{x}-\sqrt{y}\|_2,$$
where the square root is applied componentwise. 
\end{itemize}
Notice that we are abusing notation by using the same symbol $W_2$ to denote the metric in all three spaces. The reason we are justified in doing so is that the three metric spaces are related by isometric embedding. Namely, as shown in \cite[Section~1.6.3]{Panaretos2020Wasserstein}, the Wasserstein-2 distance between two centered Gaussian distributions $\mu= \mathsf{N}(0,\Sigma_\mu)$ and $\nu= \mathsf{N}(0,\Sigma_\nu)$ coincides with the Bures-Wasserstein distance between their covariance matrices:
$$W_2(\mu,\nu)=W_2(\Sigma_{\mu}, \Sigma_{\nu}).$$
Similarly, the Bures-Wasserstein metric restricted to diagonal PSD matrices is the Hellinger distance:
$$W_2(\Diag(x), \Diag(y))=W_2(x,y).$$ 

The following is the main result of this section.

\begin{THM}[label={thm:distance_est_spectral_moreau}]{(Diagonal reduction)}{}
	Let $f\colon\R^d_+\to \R\cup\{+\infty\}$ be a symmetric function and ${F\colon \cP_2^{\circ} \to \R\cup\{+\infty\}}$ be the spectral function given by
	$F(\mu)=f(\lambda(\Sigma_{\mu})).$ Then for any $\rho>0$ and $\mu\in \mathcal{P}_2^\circ$, equality holds: 
	$$F_{\rho}(\mu)=f_{\rho}(\lambda(\Sigma_{\mu})).$$
\end{THM}

Theorem~\ref{thm:distance_est_spectral} follows immediately by applying  Theorem~\ref{thm:distance_est_spectral_moreau} to indicator functions $\delta_{\cG}$ and $\delta_G$. 
The rest of the section is devoted to proving Theorem~\ref{thm:distance_est_spectral_moreau}.

\subsubsection{Proof of Theorem~\ref{thm:distance_est_spectral_moreau}}
We begin with some notation. The symbol $O(d)$ will denote the set of real orthogonal $d\times d$ matrices. For any matrix $A\in \R^{m\times n}$, let $\sigma(A) = (\sigma_1(A), \ldots, \sigma_{m\wedge n}(A))$ denote the vector of singular values of $A$ arranged in nonincreasing order. 
We say that two matrices $A,B\in\R^{m\times n}$ admit a {\em simultaneous ordered singular value decomposition (SVD)} if there exist matrices $U\in O(m),~ V\in O(n)$ satisfying $U^\top A V=\Diag(\sigma(A))$ and $U^\top B V=\Diag(\sigma(B))$. 
The following trace inequality, essentially due to \cite{theobald1975inequality,von1962some}, will play a central role in the section. The result as stated, along with a proof, may be found in \cite[Theorem 4.6]{lewis2005nonsmooth}.

\begin{lemma}[von Neumann-Theobald]\label{thm:trace_ineq}
Any two matrices $A,B\in \R^{m\times n}$ satisfy 
$$ \langle \sigma(A),\sigma(B)\rangle\geq \langle A,B\rangle.$$
Moreover, equality holds if and only if $A$ and $B$ admit a simultaneous ordered SVD.
\end{lemma}

The following lemma shows that the Bures-Wasserstein distance can be written in terms of a Procrustes problem \cite[Theorem 1]{bhatia2019bures}. 
\begin{lemma}[Procrustes distance]\label{lem:procrustes} For any two matrices $A,B\in \bS^d_+$, equality holds:
$$W_2(A,B)=\min_{U\in O(d)} \|A^{1/2}-B^{1/2}U\|_F.$$
\end{lemma}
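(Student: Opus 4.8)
The plan is to start from the definition of the Bures--Wasserstein distance and massage the trace term $\tr(A^{1/2}BA^{1/2})$ into a Procrustes-type expression, then recognize the whole quantity as the squared distance $\|A^{1/2}-B^{1/2}U\|_F^2$ minimized over orthogonal $U$. First I would write $W_2^2(A,B) = \tr A + \tr B - 2\tr\bigl((A^{1/2}BA^{1/2})^{1/2}\bigr)$ and observe that, since $A^{1/2}BA^{1/2}\succeq 0$, its trace of the square root equals the nuclear norm $\|A^{1/2}B^{1/2}\|_* = \sum_i \sigma_i(A^{1/2}B^{1/2})$; indeed the singular values of $A^{1/2}B^{1/2}$ are exactly the square roots of the eigenvalues of $A^{1/2}BA^{1/2}$. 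Thus $W_2^2(A,B) = \tr A + \tr B - 2\sum_i \sigma_i(A^{1/2}B^{1/2})$.

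On the other side, expanding the Frobenius norm gives
$\|A^{1/2}-B^{1/2}U\|_F^2 = \tr A + \tr B - 2\langle A^{1/2}, B^{1/2}U\rangle$ for any $U\in O(d)$ (using $\|B^{1/2}U\|_F^2 = \tr B$). Hence minimizing over $U$ is the same as maximizing $\langle A^{1/2}, B^{1/2}U\rangle = \langle (B^{1/2})^\top A^{1/2}, U\rangle$ over $U\in O(d)$. The remaining point is the classical fact that $\max_{U\in O(d)} \langle M, U\rangle = \|M\|_* = \sum_i \sigma_i(M)$ for any square matrix $M$, with the maximum attained by $U = WV^\top$ when $M = W\Diag(\sigma(M))V^\top$ is an SVD. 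This can be read off from Theorem~\ref{thm:trace_ineq} (von Neumann--Theobald): $\langle M, U\rangle \le \langle \sigma(M), \sigma(U)\rangle = \langle \sigma(M), \ones\rangle = \|M\|_*$, since every singular value of an orthogonal matrix equals $1$, and equality holds for a simultaneous ordered SVD, which exists by choosing $U$ as above. Applying this with $M = (B^{1/2})^\top A^{1/2} = B^{1/2}A^{1/2}$ (both factors symmetric), and noting $\sigma(B^{1/2}A^{1/2}) = \sigma(A^{1/2}B^{1/2})$, yields $\min_{U\in O(d)}\|A^{1/2}-B^{1/2}U\|_F^2 = \tr A + \tr B - 2\sum_i \sigma_i(A^{1/2}B^{1/2}) = W_2^2(A,B)$, and taking square roots finishes the proof.

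The only mild subtlety — and the step I would be most careful with — is the bookkeeping identifying $\tr\bigl((A^{1/2}BA^{1/2})^{1/2}\bigr)$ with $\sum_i\sigma_i(A^{1/2}B^{1/2})$: one should note $A^{1/2}BA^{1/2} = (A^{1/2}B^{1/2})(A^{1/2}B^{1/2})^\top$, so its eigenvalues are the squared singular values of $A^{1/2}B^{1/2}$, whence the trace of its positive square root is $\sum_i\sigma_i(A^{1/2}B^{1/2})$. Everything else is a direct application of Theorem~\ref{thm:trace_ineq} and elementary Frobenius-norm algebra; no compactness or analytic issues arise since $O(d)$ is compact and the objective is continuous, so the minimum is attained.
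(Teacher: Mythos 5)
Your proof is correct and complete. Note that the paper does not actually prove this lemma: it is stated with a citation to \cite[Theorem 1]{bhatia2019bures}, so there is no in-paper argument to compare against. Your argument is the standard one from that reference --- reduce $\tr\bigl((A^{1/2}BA^{1/2})^{1/2}\bigr)$ to the nuclear norm $\|A^{1/2}B^{1/2}\|_*$ via the identity $A^{1/2}BA^{1/2}=(A^{1/2}B^{1/2})(A^{1/2}B^{1/2})^\top$, expand the Frobenius norm, and invoke $\max_{U\in O(d)}\langle M,U\rangle=\|M\|_*$, which indeed follows from the von Neumann--Theobald inequality (Theorem~\ref{thm:trace_ineq}) since $\sigma(U)=(1,\dots,1)$ --- and every step checks out, including the attainment of the maximum at $U=WV^\top$ for an SVD $M=W\Diag(\sigma(M))V^\top$.

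One point worth flagging: the paper's displayed definition of the Bures--Wasserstein distance reads $W_2^2(A,B)=\tr A+\tr B-2\tr(A^{1/2}BA^{1/2})$, which is missing the outer square root; taken literally, the lemma would be false under that definition (e.g.\ it fails dimensionally). You correctly start from $W_2^2(A,B)=\tr A+\tr B-2\tr\bigl((A^{1/2}BA^{1/2})^{1/2}\bigr)$, which is the intended definition and the one consistent with the Gaussian $W_2$ formula the paper relies on elsewhere, so your proof establishes the lemma as it is meant to be read.
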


We will also need the following variational form of the Bures-Wasserstein distance; see for example  \cite[Section 3]{bhatia2019bures}.
\begin{lemma}[Variational form]\label{lem:var_form} For any two matrices $A,B\in \bS^d_+$, equality holds:
$$W_2^2(A,B)=\min_{\substack{x,y:\, \EE[xx^\top]=A,\, \EE[yy^\top]=B}} \,\EE\|x-y\|^2_2.$$
\end{lemma}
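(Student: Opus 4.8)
The plan is to turn the right-hand side into a finite-dimensional semidefinite optimization over the cross-covariance matrix, and then evaluate that optimum using the von Neumann--Theobald trace inequality (Theorem~\ref{thm:trace_ineq}) together with the Procrustes description of the Bures--Wasserstein metric (Lemma~\ref{lem:procrustes}). First I would record the elementary identity: for any random vectors $x,y$ with $\EE xx^\top=A$ and $\EE yy^\top=B$, writing $C:=\EE[xy^\top]$ gives $\EE\|x-y\|_2^2=\tr A+\tr B-2\tr C$, and the joint second-moment matrix $\bigl(\begin{smallmatrix}A & C\\ C^\top & B\end{smallmatrix}\bigr)=\EE[(x;y)(x;y)^\top]$ is positive semidefinite. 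Conversely, every $C$ making this block matrix PSD is realized by a (possibly degenerate) jointly Gaussian pair $(x,y)$ with the prescribed marginal covariances. Since the feasible set of such $C$ is closed and bounded, this yields
\[
\min_{\substack{\EE xx^\top=A\\ \EE yy^\top=B}}\EE\|x-y\|_2^2
\;=\;\tr A+\tr B-2\,\max\Bigl\{\tr C \;:\; \bigl(\begin{smallmatrix}A & C\\ C^\top & B\end{smallmatrix}\bigr)\succeq 0\Bigr\},
\]
with both extrema attained.

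Next I would evaluate the inner maximum. Assuming first $A,B\succ 0$, a Schur-complement computation shows $\bigl(\begin{smallmatrix}A & C\\ C^\top & B\end{smallmatrix}\bigr)\succeq 0$ iff $K:=A^{-1/2}CB^{-1/2}$ is a contraction, i.e.\ $\|K\|_{\rm op}\le 1$; hence $C=A^{1/2}KB^{1/2}$ and, by cyclicity of the trace, $\tr C=\tr\bigl(B^{1/2}A^{1/2}K\bigr)=\langle A^{1/2}B^{1/2},K\rangle$. Applying Theorem~\ref{thm:trace_ineq} to the pair $(A^{1/2}B^{1/2},K)$ and using $\sigma_i(K)\le 1$ for all $i$ gives $\tr C\le\langle\sigma(A^{1/2}B^{1/2}),\sigma(K)\rangle\le\sum_i\sigma_i(A^{1/2}B^{1/2})$, and the equality case of Theorem~\ref{thm:trace_ineq} identifies an orthogonal $K$ that aligns the singular systems and attains the bound (one also checks the corresponding block matrix is PSD). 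Since $\sigma_i(A^{1/2}B^{1/2})=\sqrt{\lambda_i(A^{1/2}BA^{1/2})}$, we get $\max\tr C=\tr\bigl((A^{1/2}BA^{1/2})^{1/2}\bigr)$, so the displayed minimum equals $\tr A+\tr B-2\tr\bigl((A^{1/2}BA^{1/2})^{1/2}\bigr)=W_2^2(A,B)$. The rank-deficient case follows by applying this to $A+\varepsilon I$ and $B+\varepsilon I$ and letting $\varepsilon\downarrow 0$, using continuity of the Bures--Wasserstein distance and of the value in $(A,B)$; alternatively one invokes the general factorization $C=A^{1/2}KB^{1/2}$ with $\|K\|_{\rm op}\le 1$ valid for all PSD block matrices.

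Finally, I would cross-check and supply the matching upper bound cleanly via Lemma~\ref{lem:procrustes}: letting $U^\star\in O(d)$ attain $W_2(A,B)=\min_{U\in O(d)}\|A^{1/2}-B^{1/2}U\|_F$ and setting $x=A^{1/2}g$, $y=B^{1/2}U^\star g$ with $g\sim N(0,I)$, one has $\EE xx^\top=A$, $\EE yy^\top=B$, and $\EE\|x-y\|_2^2=\|A^{1/2}-B^{1/2}U^\star\|_F^2=W_2^2(A,B)$, giving an explicit optimal coupling and confirming the value in all cases. The main obstacle I anticipate is the bookkeeping inside the inner maximization---pinning down the equality case of the trace inequality to exhibit the optimal contraction $K$ and verifying positive semidefiniteness of the resulting block matrix---together with the careful treatment of singular $A$ or $B$; both points are routine but worth spelling out.
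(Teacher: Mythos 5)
Your proposal is correct, but note that the paper does not actually prove this lemma: it is stated with a citation to Bhatia--Jain--Lim, so there is no internal proof to compare against. Your argument is a sound, self-contained derivation along the standard lines of that reference: reduce the coupling problem to maximizing $\tr C$ over cross-moment matrices $C$ with $\bigl(\begin{smallmatrix}A & C\\ C^\top & B\end{smallmatrix}\bigr)\succeq 0$ (attainment via the jointly Gaussian realization and compactness of the feasible set is fine), characterize feasibility by the contraction factorization $C=A^{1/2}KB^{1/2}$, and bound $\tr C=\langle A^{1/2}B^{1/2},K\rangle$ by $\sum_i\sigma_i(A^{1/2}B^{1/2})=\tr\bigl((A^{1/2}BA^{1/2})^{1/2}\bigr)$ via Theorem~\ref{thm:trace_ineq}, which matches the paper's definition of the Bures--Wasserstein distance. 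Two small simplifications: you do not need the equality case of the von Neumann--Theobald inequality at all, since choosing $K=UV^\top$ from an SVD $A^{1/2}B^{1/2}=U\Sigma V^\top$ attains the bound directly (and is orthogonal, hence feasible); and for singular $A$ or $B$ the cleaner route is the one you mention in passing, namely that the factorization $C=A^{1/2}KB^{1/2}$ with $\|K\|_{\rm op}\le 1$ characterizes PSD block matrices without any invertibility assumption, which avoids justifying continuity of the optimal value under the $\varepsilon$-regularization. Your closing construction $x=A^{1/2}g$, $y=B^{1/2}U^\star g$ with $U^\star$ from Lemma~\ref{lem:procrustes} is a nice explicit optimal coupling and by itself already gives the inequality $\min\le W_2^2(A,B)$ in all cases.
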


With these results in place, we are ready to start proving  Theorem~\ref{thm:distance_est_spectral_moreau}. To this end, we will first establish the theorem in the Gaussian setting and then deduce the general case by a reduction. As the first step, we will compute the $W_2$ distance of a matrix to an orbit of $B$ under conjugation: 
$$\mathcal{O}(B):=\{VBV^\top \mid V\in O(d)\}.$$ 

\begin{lemma}[Distance to orbit]\label{lem:gauss}
For any two matrices $A,B \in \bS^d_+$, we have
\begin{equation}\label{eqn:thing_we_want}
W_2(A,\mathcal{O}(B))=\Big\|\sqrt{\lambda(A)}-\sqrt{\lambda(B)}\Big\|_2 = W_2(\lambda(A),\lambda(B)).
\end{equation}
Moreover, the set of nearest points of $\mathcal{O}(B)$ to $A$ is given by 
\begin{equation}\label{eqn:projection_formula}
\{U\Diag(\lambda(B))U^\top \mid A=U\Diag(\lambda(A))U^\top,~ U\in O(d)\}.
\end{equation}
\end{lemma}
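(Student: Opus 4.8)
The plan is to pass to the Procrustes form of the Bures--Wasserstein distance given by Lemma~\ref{lem:procrustes}, which turns $W_2(A,\mathcal{O}(B))$ into a Frobenius-distance minimization over two orthogonal matrices, and then to apply the von Neumann--Theobald trace inequality (Theorem~\ref{thm:trace_ineq}). To begin, observe that $(VBV^\top)^{1/2}=VB^{1/2}V^\top$ for every $V\in O(d)$, since the right-hand side is positive semidefinite and squares to $VBV^\top$. Hence Lemma~\ref{lem:procrustes} gives
$$W_2(A,VBV^\top)=\min_{W\in O(d)}\bigl\|A^{1/2}-VB^{1/2}V^\top W\bigr\|_F=\min_{\tilde W\in O(d)}\bigl\|A^{1/2}-VB^{1/2}\tilde W\bigr\|_F,$$
where the last step substitutes $\tilde W=V^\top W$. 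Since $\mathcal{O}(B)$ is the continuous image of the compact group $O(d)$, it is compact, so the infimum defining $W_2(A,\mathcal{O}(B))$ is attained and equals $\min_{V,\tilde W\in O(d)}\|A^{1/2}-VB^{1/2}\tilde W\|_F$.

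For \eqref{eqn:thing_we_want} I would expand the Frobenius square as $\tr A+\tr B-2\langle A^{1/2},VB^{1/2}\tilde W\rangle$, using orthogonal invariance of the Frobenius norm to identify $\|VB^{1/2}\tilde W\|_F^2=\tr B$. Because $V,\tilde W\in O(d)$ and $B^{1/2}$ is symmetric positive semidefinite, the singular value vector of $VB^{1/2}\tilde W$ equals that of $B^{1/2}$, namely $\sqrt{\lambda(B)}$ arranged nonincreasingly; likewise $\sigma(A^{1/2})=\sqrt{\lambda(A)}$. Theorem~\ref{thm:trace_ineq} therefore gives $\langle A^{1/2},VB^{1/2}\tilde W\rangle\le\langle\sqrt{\lambda(A)},\sqrt{\lambda(B)}\rangle$ for all $V,\tilde W$, with equality exactly when $A^{1/2}$ and $VB^{1/2}\tilde W$ admit a simultaneous ordered SVD. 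To see the bound is met, fix eigendecompositions $A^{1/2}=U_A\Diag(\sqrt{\lambda(A)})U_A^\top$ and $B^{1/2}=U_B\Diag(\sqrt{\lambda(B)})U_B^\top$ and take $V=U_AU_B^\top$, $\tilde W=U_BU_A^\top$, so that $VB^{1/2}\tilde W=U_A\Diag(\sqrt{\lambda(B)})U_A^\top$ is simultaneously diagonalized with $A^{1/2}$ in matching order. Combining yields $W_2(A,\mathcal{O}(B))^2=\tr A+\tr B-2\langle\sqrt{\lambda(A)},\sqrt{\lambda(B)}\rangle=\sum_i\bigl(\sqrt{\lambda_i(A)}-\sqrt{\lambda_i(B)}\bigr)^2$, which is \eqref{eqn:thing_we_want}.

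For the characterization \eqref{eqn:projection_formula} of the nearest points I would prove two inclusions. If $C=U\Diag(\lambda(B))U^\top$ with $A=U\Diag(\lambda(A))U^\top$, then $C\in\mathcal{O}(B)$ and, choosing $W=I$ in Lemma~\ref{lem:procrustes}, $W_2(A,C)\le\|A^{1/2}-C^{1/2}\|_F=\|\sqrt{\lambda(A)}-\sqrt{\lambda(B)}\|_2=W_2(A,\mathcal{O}(B))$, so $C$ is nearest. Conversely, let $C=VBV^\top$ be nearest. Then the displayed minimum over $\tilde W$ equals $\|\sqrt{\lambda(A)}-\sqrt{\lambda(B)}\|_2^2$, so at the optimal $\tilde W$ the trace inequality above holds with equality; by the equality clause of Theorem~\ref{thm:trace_ineq} there are $P,Q\in O(d)$ with $P^\top A^{1/2}Q=\Diag(\sqrt{\lambda(A)})$ and $P^\top MQ=\Diag(\sqrt{\lambda(B)})$, where $M:=VB^{1/2}\tilde W$. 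Using $\tilde W\tilde W^\top=I$ and symmetry of $B^{1/2}$ one computes $MM^\top=VBV^\top=C$, while the second relation gives $MM^\top=P\Diag(\lambda(B))P^\top$; similarly, since $A^{1/2}$ is symmetric, $A=A^{1/2}(A^{1/2})^\top=P\Diag(\lambda(A))P^\top$. Taking $U=P$ exhibits $C$ in the set \eqref{eqn:projection_formula}, completing the argument.

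The only point requiring care is that Theorem~\ref{thm:trace_ineq} pins down the factors $P,Q$ only up to the symmetries of $\Diag(\sqrt{\lambda(A)})$ and $\Diag(\sqrt{\lambda(B)})$ when $A$ or $B$ has repeated eigenvalues, so one cannot reason directly about $P$ or $Q$ individually. This is sidestepped by reading off $C$ and $A$ only through the \emph{symmetric} products $MM^\top$ and $A^{1/2}(A^{1/2})^\top$, which are insensitive to the right factor $Q$; no explicit multiplicity bookkeeping is then needed. I expect the remaining steps — the identity $(VBV^\top)^{1/2}=VB^{1/2}V^\top$, the orthogonal-invariance facts for the Frobenius norm and singular values, and the trace arithmetic — to be entirely routine.
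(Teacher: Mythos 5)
Your proposal is correct and follows essentially the same route as the paper: both pass to the Procrustes form of the Bures--Wasserstein distance, expand the Frobenius square, apply the von Neumann--Theobald inequality to the product $VB^{1/2}\tilde W$ (whose singular values are $\sqrt{\lambda(B)}$), and recover the nearest-point set from the equality clause by multiplying the simultaneous-SVD identities by their transposes. The only cosmetic difference is that you establish attainment by an explicit choice of $V,\tilde W$, whereas the paper proves the upper bound separately by a direct conjugation argument.
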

\begin{proof}
To see the inequality $\leq$ in \eqref{eqn:thing_we_want}, consider an eigenvalue decomposition 
$A=U\Diag(\lambda(A))U^\top$ for some $U\in O(d)$. Then we have 
$$W_2(A,\mathcal{O}(B))\leq W_2(U\Diag(\lambda(A))U^\top,U\Diag(\lambda(B))U^\top)= W_2(\lambda(A),\lambda(B)).$$
Next, we show the reverse inequality. To this end, set $\bar A:=A^{1/2}$ and $\bar B:=B^{1/2}$. Then for any $V\in O(d)$, we successively compute  
\begin{align}
W_2^2(A,VBV^\top)&= \inf_{U\in O(d)}\|\bar A-(V\bar BV^\top) U\|_F^2\label{eqn:pro}\\
&=\|\bar A\|^2_F+\|\bar B\|^2_F-2\sup_{U\in O(d)} \langle \bar A,V\bar B (V^\top U)\rangle\label{eqn:expand_square}\\
&=\|\bar A\|^2_F+\|\bar B\|^2_F-2\sup_{Z\in O(d)} \langle \bar A,V\bar B Z^\top\rangle\label{eqn:expand_square2}\\
&\geq \|\bar A\|^2_F+\|\bar B\|^2_F-2 \langle \sigma(\bar A),\sigma(\bar B)\rangle\label{eqn:von_neumann}\\
&=\|\lambda(\bar A)\|^2_2+\|\lambda(\bar B)\|^2_2-2 \langle \lambda(\bar A),\lambda(\bar B)\rangle\label{eqn:eig_sing}\\
&=\|\lambda(\bar A)-\lambda(\bar B)\|^2_2 \label{eqn:unneeded}\\
&=\Big\|\sqrt{\lambda(A)}-\sqrt{\lambda(B)}\Big\|^2_2,\label{eqn:unneeded2}
\end{align}
where \eqref{eqn:pro} follows from Lemma~\ref{lem:procrustes}, the equality \eqref{eqn:expand_square} follows from expanding the Frobenius norm, \eqref{eqn:expand_square2} uses the variable substitution $Z=V^\top U$, the inequality \eqref{eqn:von_neumann} follows from von Neumann's trace inequality (Lemma~\ref{thm:trace_ineq}), and \eqref{eqn:eig_sing} uses the fact that eigenvalues and singular values coincide for PSD matrices. Taking the infimum over $V\in O(d)$ shows the claimed inequality $\geq$ in \eqref{eqn:thing_we_want}.

Next, the fact that any matrix in the set \eqref{eqn:projection_formula} is a nearest point of $\mathcal{O}(B)$ to $A$ follows directly from the expression \eqref{eqn:thing_we_want}. To see the converse, let  $V\in O(d)$ and observe from \eqref{eqn:pro}--\eqref{eqn:unneeded2} that $VBV^\top$ is a nearest point of $\mathcal{O}(B)$ to $A$ if and only if \eqref{eqn:von_neumann} holds with equality; applying Lemma~\ref{thm:trace_ineq}, we see that equality holds if and only if there exist matrices $M_1,M_2, Z\in O(d)$ such that
\begin{equation}\label{eqn:blah_blha}
\bar A=M_1\Diag(\lambda(\bar A))M_2^\top\qquad \textrm{and}\qquad V\bar B Z^\top= M_1\Diag(\lambda(\bar B))M_2^\top.
\end{equation}
In particular, right-multiplying each equation by its transpose yields the expressions
$$A=M_1\Diag(\lambda( A))M_1^\top\qquad\textrm{and}\qquad V BV^\top =M_1\Diag(\lambda(B)) M_1^\top,$$
thereby completing the proof.
\end{proof}

We are now ready to complete the proof of Theorem~\ref{thm:distance_est_spectral_moreau} in the Gaussian setting. In the proof, we will use the basic fact that for any vectors $v,w\in \R^d$, we have
\begin{equation}\label{eqn:nonincreaing_basic}
\|v^{\uparrow}-w^{\uparrow}\|_2\leq \|v-w\|_2,
\end{equation} 
where $v^{\uparrow}$ and $w^{\uparrow}$ are the vectors obtained by permuting the coordinates of $v$ and $w$ to be nonincreasing.

\begin{theorem}[Envelope and prox-map on Gaussian space] 	\label{thm:gauss_space}
	Consider a symmetric function $f\colon\R^d_+\to \R\cup\{+\infty\}$ and define the function on PSD matrices $F\colon\bS^d_+\to  \R\cup\{+\infty\}$ by setting
	$F(A)=f(\lambda(A)).$
	Then, for any $\rho>0$ and $A \in \bS^d_+$, the following equalities hold:
	$$F_{\rho}(A)=f_{\rho}(\lambda(A))$$
	and
	$$\prox_{\rho F}(A)=\{U\Diag(v)U^\top \mid v\in \prox_{\rho f}(\lambda(A)),~ A=U\Diag(\lambda(A))U^\top,~ U\in O(d)\}.$$
\end{theorem}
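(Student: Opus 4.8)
The plan is to collapse the matrix minimization defining $F_{\rho}(A)$ to a minimization over eigenvalue vectors, exploiting that $F$ is constant on conjugation orbits together with the orbit–distance computation of Lemma~\ref{lem:gauss}. Throughout write $v^{\downarrow}$ for $v$ rearranged into nonincreasing order, and recall that $\lambda(\cdot)$ returns the nonincreasing spectrum.

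\textbf{Reduction to eigenvalue vectors.} Every $B\in\bS^d_+$ with $\lambda(B)=v^{\downarrow}$ lies in the orbit $\mathcal{O}(\Diag(v))$, on which $F(B)=f(\lambda(B))=f(v^{\downarrow})=f(v)$ by symmetry of $f$; conversely each orbit $\mathcal{O}(\Diag(v))$, $v\in\R^d_+$, consists of exactly the PSD matrices with spectrum $v^{\downarrow}$. Grouping the infimum over $B$ accordingly,
$$F_{\rho}(A)=\inf_{v\in\R^d_+}\Big[f(v)+\tfrac{1}{2\rho}\,W_2^2\big(A,\mathcal{O}(\Diag(v))\big)\Big].$$
By Lemma~\ref{lem:gauss}, $W_2^2(A,\mathcal{O}(\Diag(v)))=\big\|\sqrt{\lambda(A)}-\sqrt{v^{\downarrow}}\big\|_2^2$. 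Using $f(v)=f(v^{\downarrow})$ once more and substituting $w=v^{\downarrow}$, which ranges over all nonincreasing vectors of $\R^d_+$, the right-hand side becomes the infimum of $f(w)+\tfrac1{2\rho}\|\sqrt{\lambda(A)}-\sqrt w\|_2^2$ over nonincreasing $w\in\R^d_+$. Finally, for an arbitrary $w\in\R^d_+$ the rearrangement inequality~\eqref{eqn:nonincreaing_basic}, applied to the already nonincreasing vector $\sqrt{\lambda(A)}$ and to $\sqrt w$, gives $\|\sqrt{\lambda(A)}-\sqrt{w^{\downarrow}}\|_2\le\|\sqrt{\lambda(A)}-\sqrt w\|_2$, while $f(w^{\downarrow})=f(w)$; hence dropping the monotonicity restriction on $w$ leaves the infimum unchanged, and $F_{\rho}(A)=f_{\rho}(\lambda(A))$.

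\textbf{The proximal map.} The common computation behind both inclusions is that if $A=U\Diag(\lambda(A))U^{\top}$ with $U\in O(d)$ and $v\in\R^d_+$, then $A$ and $B:=U\Diag(v)U^{\top}$ are simultaneously diagonalized, so a direct substitution into the Bures--Wasserstein formula (with $A^{1/2}=U\Diag(\sqrt{\lambda(A)})U^{\top}$) yields $W_2^2(A,B)=\|\sqrt{\lambda(A)}-\sqrt v\|_2^2$. For $\supseteq$: given $v\in\prox_{\rho f}(\lambda(A))$ and such a $U$, the matrix $B$ satisfies $F(B)+\tfrac1{2\rho}W_2^2(A,B)=f(v)+\tfrac1{2\rho}\|\sqrt{\lambda(A)}-\sqrt v\|_2^2=f_{\rho}(\lambda(A))=F_{\rho}(A)$, so $B\in\prox_{\rho F}(A)$. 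For $\subseteq$: given $B\in\prox_{\rho F}(A)$, put $v=\lambda(B)$; then $F(B)=f(v)$ and, by Lemma~\ref{lem:gauss}, $W_2(A,B)\ge W_2(A,\mathcal{O}(\Diag(v)))=\|\sqrt{\lambda(A)}-\sqrt v\|_2$, whence
$$f_{\rho}(\lambda(A))=F(B)+\tfrac1{2\rho}W_2^2(A,B)\ \ge\ f(v)+\tfrac1{2\rho}\big\|\sqrt{\lambda(A)}-\sqrt v\big\|_2^2\ \ge\ f_{\rho}(\lambda(A)).$$
Equality throughout forces $v\in\prox_{\rho f}(\lambda(A))$ and $W_2(A,B)=W_2(A,\mathcal{O}(\Diag(v)))$; since $B\in\mathcal{O}(\Diag(v))$, the latter says $B$ is a nearest point of the orbit to $A$, so the characterization in Lemma~\ref{lem:gauss} gives $B=U\Diag(v)U^{\top}$ for some $U\in O(d)$ with $A=U\Diag(\lambda(A))U^{\top}$, which is exactly the claimed form.

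\textbf{Main obstacle.} There is no deep difficulty here: the heavy lifting is Lemma~\ref{lem:gauss} (built on von Neumann's trace inequality) and the Bures--Wasserstein formula. The only thing requiring care is the bookkeeping between sorted and unsorted eigenvalue vectors---that $\mathcal{O}(\Diag(v))$ depends on $v$ only through $v^{\downarrow}$, that $f(v)=f(v^{\downarrow})$, and that $\prox_{\rho f}(\lambda(A))$ is invariant under coordinate permutations (again by symmetry of $f$ together with~\eqref{eqn:nonincreaing_basic})---so that the two descriptions of $\prox_{\rho F}(A)$ coincide.
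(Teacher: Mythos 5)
Your proof is correct and follows essentially the same route as the paper: both decompose the infimum defining $F_{\rho}(A)$ over conjugation orbits $\mathcal{O}(\Diag(v))$, invoke Lemma~\ref{lem:gauss} for the orbit distance, and use the rearrangement inequality~\eqref{eqn:nonincreaing_basic} to pass to the Hellinger Moreau envelope of $f$. Your treatment of $\prox_{\rho F}$ as two explicit inclusions is just a more spelled-out version of the paper's equality-tracing argument, with the same appeal to the nearest-point characterization in Lemma~\ref{lem:gauss}.
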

\begin{proof}
Given $\rho>0$ and $A \in \bS^d_+$, we successively compute
\begin{align}
	F_{\rho}(A)&=\inf_{B\succeq 0}~ F(B)+\tfrac{1}{2\rho} W^2_2(A,B)\\
	&=\inf_{v\geq 0}\,\inf_{B\in \mathcal{O}(\Diag(v))}~ f(\lambda(B))+\tfrac{1}{2\rho} W^2_2(A,B)\\
	&=\inf_{v\geq 0}~ f(v)+\tfrac{1}{2\rho} \inf_{B\in \mathcal{O}(\Diag(v))} W^2_2(A,B)\\
	&=\inf_{v\geq 0}~ f(v)+\tfrac{1}{2\rho}  W^2_2(A,\mathcal{O}(\Diag(v)))\\
	&=\inf_{v\geq 0}~ f(v)+\tfrac{1}{2\rho} \Big\|\sqrt{\lambda(A)}-\sqrt{v^{\uparrow}}\Big\|^2_2\label{eqn:orbit_dist}\\
	&=\inf_{v\geq 0}~ f(v)+\tfrac{1}{2\rho} W^2_2(\lambda(A),v)\label{eqn:reordering_works}\\
	&=f_{\rho}(\lambda(A)),
\end{align}
where \eqref{eqn:orbit_dist} follows from Lemma~\ref{lem:gauss}  and \eqref{eqn:reordering_works} follows from \eqref{eqn:nonincreaing_basic} and the symmetry of $f$. Next, observe from the chain of equalities above that $B=U\Diag(v)U^\top$ lies in $\prox_{\rho F}(A)$ for some $U\in O(d)$ if, and only if,
$v$ lies in $\prox_{\rho f}(\lambda(A))$ and the equality 
$W_2(A,B)=W_2(A,\mathcal{O}(\Diag(v)))$ holds; appealing to Lemma~\ref{lem:gauss}, this last equality holds if and only if we may write $A=U\Diag(\lambda(A))U^\top$.
Thus the proof is complete.
\end{proof}

We now move on to establishing Theorem~\ref{thm:distance_est_spectral_moreau} in full generality, i.e., outside the Gaussian setting. To begin, we establish an analogue of Lemma~\ref{lem:gauss}.

\begin{lemma}\label{lem:basic_scie}
  Let $\cC \subset \bS^{d}_{+}$ be a set of positive semidefinite matrices, and consider the set $\cP_{\cC} := \{\nu \in \cP_{2}^{\circ} \mid \Sigma_{\nu} \in \cC\}.$
 Then for any $\mu \in \mathcal{P}^{\circ}_2$, the following equality holds:
			\begin{equation}
              \label{eqn:basic_science}
              W_2(\mu, \mathcal{P}_{\mathcal{C}}) = W_2(\Sigma_{\mu},\cC).
			\end{equation}
          \end{lemma}
          \begin{proof} 
Note first that for any $\mu \in \mathcal{P}_2$, we have 
\begin{align*}
W^2_2(\mu, \mathcal{P}_{\cC}) = \inf_{\pi \in \Pi(\mu,\nu),\, \nu\in \mathcal{P}_{\cC}} \mathop{\EE}_{(x,y)\sim\pi}\|x-y\|^2_2 &\geq \inf_{x,y:\,\EE[xx^\top]=\Sigma_{\mu}, \,\EE[yy^\top]\in \cC} \EE \|x-y\|^2_2 \\&= W_2^{2}(\Sigma_{\mu},\cC),
\end{align*}
where the last equality follows from  Lemma~\ref{lem:var_form}. This establishes the inequality $\geq$ in \eqref{eqn:basic_science}. To see the reverse inequality, let $\mu \in \mathcal{P}^{\circ}_2$ and $\Sigma\in\cC$. Suppose first that $\Sigma_{\mu}$ is invertible. In this case, the optimal transport map from $\mathsf{N}(0,\Sigma_\mu)$ to $\mathsf{N}(0, \Sigma)$ is given by a linear function $T$, as in \cite[Section~1.6.3]{Panaretos2020Wasserstein}. Consider the pushforward measure $\bar{\nu} := T_{\#}\mu$. Since the integral of a quadratic function depends on a probability measure only through its first two moments, it follows immediately from the linearity of $T$ that the first two moments of $T_{\#}\mu = \bar{\nu}$ and $T_{\#}\mathsf{N}(0,\Sigma_\mu) = \mathsf{N}(0, \Sigma) $ coincide, so $\bar{\nu}\in\cP_\cC$. Likewise, ${\EE}_{\mu}\|x-Tx\|^2_2={\EE}_{\mathsf{N}(0,\Sigma_\mu)}\|x-Tx\|^2_2$, so we conclude
$$
W^2_2(\mu, \mathcal{P}_{\cC}) \leq W_2^2(\mu,\bar{\nu}) \leq {\EE}_{\mu}\|x-Tx\|^2_2 =  W_2^2(\mathsf{N}(0,\Sigma_\mu),\mathsf{N}(0,\Sigma)) =W_2^2(\Sigma_\mu, \Sigma).
$$
Taking the infimum over $\Sigma\in\cC$ yields $W^2_2(\mu, \mathcal{P}_{\cC})\leq W_2^2(\Sigma_\mu, \cC)$, as desired.


Finally, if $\Sigma_{\mu}$ is not invertible, then $\mu$ is supported on the proper subspace $\cH:=\range(\Sigma_\mu)$ of $\R^d$. We then define the perturbed distribution $\mu_t := \mu\vert_{\cH} \times \gamma_t$ on $\R^d$, where $\gamma_t$ is the zero-mean Gaussian distribution supported on $\mathcal{H}^{\perp} = \ker(\Sigma_\mu)$ with covariance $t\cdot I$. Now, Lemma~\ref{lem:dist-trace} shows $W_{2}^2(\mu_{t}, \mu) = W_{2}^2(\mu_{t}, (\sP_{\mathcal{H}})_{\#}\mu_{t}) = \tr\Sigma_{\gamma_t} \rightarrow 0$ as $t\rightarrow 0$, and clearly $\Sigma_{\mu_t}$ is invertible with $\Sigma_{\mu_t} \rightarrow \Sigma_{\mu}$ as $t\rightarrow 0$. From \eqref{eqn:basic_science}, we have
$W_2(\mu_t, \mathcal{P}_{\cC})=W_2(\Sigma_{\mu_t}, \cC)$. Letting $t\rightarrow 0$, we deduce 
$W_2(\mu, \mathcal{P}_{\cC})=W_2(\Sigma_{\mu}, \cC)$ by continuity.
\end{proof}

The proof of Theorem~\ref{thm:distance_est_spectral_moreau} now proceeds in exactly the same way as that of Theorem~\ref{thm:gauss_space}, with Lemma~\ref{lem:basic_scie} being used in conjunction with Lemma~\ref{lem:gauss}.

\begin{proof}[Proof of Theorem~\ref{thm:distance_est_spectral_moreau}]
For any set $\cC \subset \bS^{d}_{+}$, define $\cP_{\cC} := \{\nu \in \cP_{2}^{\circ} \mid \Sigma_{\nu} \in \cC\}$. Then for any $\rho>0$ and $\mu\in\cP^{\circ}_2$, we successively compute
	\begin{align}
		F_{\rho}(\mu)&=\inf_{\nu\in \cP_2^{\circ}}~ F(\nu)+\tfrac{1}{2\rho} W^2_2(\mu,\nu)\\
		&=\inf_{u\geq 0}\,\inf_{\nu \in \cP_{\mathcal{O}(\Diag(u))}}~ f(u)+\tfrac{1}{2\rho} W^2_2(\mu,\nu)\\
		&=\inf_{u\geq 0}~ f(u)+\tfrac{1}{2\rho} + W^2_2\big(\mu, \cP_{\mathcal{O}(\Diag(u))}\big)\\
		&=\inf_{u\geq 0}~ f(u)+\tfrac{1}{2\rho} W_2^2(\Sigma_{\mu},\mathcal{O}(\Diag(u)))\label{eqn:here_we_are}\\
		&=\inf_{u\geq 0}~ f(u)+\tfrac{1}{2\rho} \Big\|\sqrt{\lambda(\Sigma_{\mu})}-\sqrt{u^{\uparrow}}\Big\|^2_2\label{eqn:orbit_dist2}\\
		&=\inf_{u\geq 0}~ f(u)+\tfrac{1}{2\rho} W_2^2(\lambda(A),u)\label{eqn:reordering_works2}\\
		&=f_{\rho}(\lambda(A)),
	\end{align}
where \eqref{eqn:here_we_are} follows from Lemma~\ref{lem:basic_scie}, the equality \eqref{eqn:orbit_dist2} follows from Lemma~\ref{lem:gauss},  and \eqref{eqn:reordering_works2} follows from \eqref{eqn:nonincreaing_basic} and the symmetry of $f$. This completes the proof.
\end{proof}

\paragraph{Connection to existing literature.}
The results presented in this section have close analogues in the existing literature on matrix analysis and optimization. Namely, a function $F\colon \bS^{d}\to\R\cup\{+\infty\}$ is called {\em orthogonally invariant} (or {\em spectral}) if the equality holds:
$$F(UXU^\top)=F(X)\qquad \forall X\in \bS^d,\, U\in O(d).$$
Evidently, such functions are fully described by their restriction to diagonal matrices. More precisely, a function $F$ is orthogonally invariant if and only if there exists a symmetric function $f\colon\R^d\to\R\cup\{+\infty\}$ satisfying $F(X)=f(\lambda(X))$. A pervasive theme in the study of such functions is that various variational properties of the permutation-invariant function $f$ are inherited by the induced
spectral function $F=f\circ\lambda$; see, e.g., \cite{lewis2005nonsmooth,miroslav,lewis1996convex,lewis1999nonsmooth,daniilidis2014orthogonal,daniilidis2008prox,sendov2007higher,davis1957all}. For example, $f$ is convex if and only if $F$ is convex \cite{davis1957all,lewis1996convex}, $f$ is $C^p$-smooth if and only of $F$ is $C^p$-smooth \cite{sendov2007higher,miroslav,sylvester}, and so forth. A useful result in this area is the expression for the Moreau envelope obtained in \cite{daniilidis2008prox,drusvyatskiy2018variational}:
\begin{equation}\label{eqn:moreau_summ}
F_{\rho}(X)=f_{\rho}(\lambda(X))\qquad \forall X\in \bS^d.
\end{equation}
For example, as explained in \cite{drusvyatskiy2018variational}, it readily yields expressions relating generalized derivatives of $f$ and $F$. Crucially, in \eqref{eqn:moreau_summ} the Moreau envelope $F_{\rho}$ is computed with respect to the Frobenius norm on $\bS^d$ and the Moreau envelope $f_{\rho}$ is computed with respect to the $\ell_2$ norm on $\R^d$. Thus 
the results presented in the section extend this circle of ideas to functions defined on the Wasserstein-2 space.

\section{Proofs from Section~\ref{sec:second-moment}}
\subsection{Proof of Proposition~\ref{prop:elementary_dist_rad}}\label{sec:proof_dist_rad}
Suppose otherwise that there exists a sequence $\nu_i\in \Q'$ with $W_2(\nu_i,\mu)\leq r$ satisfying ${\rm REG}(\nu_i)\to \infty$. Then from \eqref{eqn:basic_eqncompat} we deduce $W_2(\nu_i,\cE)={\rm RSE}(\nu_i)\to 0$. On the other hand, using the triangle inequality yields ${\rm RSE}(\mu)=W_2(\mu,\cE)\leq W_2(\nu_i,\mu)+W_2(\nu_i,\cE)$. Letting $i$ tend to infinity, we deduce ${\rm RSE}(\mu)\leq r$, which is a contradiction. Thus no such sequence $\nu_i$ exists and \eqref{eqn:blup_desired_M} holds.

Next, suppose that for some $c,q>0$, the inequality ${\rm RSE}(\nu)^q\leq c\cdot {\rm REG}(\nu)^{-1}$ holds for all $\nu \in \Q'\setminus \cE$. Then for any $\nu \in \cQ'$ such that $W_2(\nu,\mu)\leq r$, the triangle inequality yields
$${\rm RSE}(\mu)=W_2(\mu,\cE)\leq W_2(\nu,\mu)+W_2(\nu,\cE)\leq r+{\rm RSE}(\nu)\leq r+(c/{\rm REG}(\nu))^{1/q}.$$
Rearranging yields the estimate ${\rm REG}(\nu)\leq c\cdot({\rm RSE}(\mu)-r)^{-q}$, thereby completing the proof.

\subsection{Proof of Theorem~\ref{thm:infinitesimal}}\label{sec:proof_sec_slopes}
In this section, we prove Theorem~\ref{thm:infinitesimal}, or rather a stronger version thereof. To this end, suppose there exists a  $C^1$-smooth map $F\colon\R^d\to \bS^k_+$ such that the Hessian $\nabla^2_{\cM} f(\betas)$ corresponding to any measure $\mu\in \cP_2$ can be written as $\nabla^2_{\cM} f(\betas)={\EE}_{\mu} F(x)$. The differential of $F$ at $x$ will be denoted by $DF(x)\colon \R^d\to \bS^k$, while the symbol $DF(x)^*\colon \bS^k\to \R^d$ will denote the adjoint linear map of $DF(x)$.  We assume that there exists a constant $L >0$ satisfying $$\|DF(x)\|_{\rm op}\leq L(1+\|x\|_2) \qquad\forall x\in\R^d.$$ \revised{By the mean value theorem, there exists a constant $C>0$ such that $\|F(x)\|_{F}\leq C(1+\|x\|_2^{2})$ for all $x\in\mathbf{R}^d$.} 
Now define the function $\mathcal{J}\colon\cP_2\to\R$ by setting $$\mathcal{J}(\mu)=\lambda_{\min} ({\EE}_{\mu}F(x)).$$
It follows immediately from \cite[Theorem 6.9]{Villani2009} and the continuity of the function $\lambda_{\min}$ that $\mathcal{J}$ is continuous; in particular, the zero set $\cE=[\cJ = 0]$ is a closed subset of $\cP_2$. 

In order to simplify notation, for any matrix $A\in \bS^k$ we let $E_k(A)$ denote the set of all unit eigenvectors of $A$ corresponding to the minimal eigenvalue $\lambda_{\min}(A)$. For any measure $\mu\in\cP_2$, we define  $E_k(\mu):=E_k({\EE}_{\mu} F(x))$. We will use the elementary facts that $\lambda_{\min}$ is a Lipschitz continuous concave function on $\bS^k$ and its supdifferential at any $A\in\bS^k$ is the set 
$$\partial \lambda_{\min}(A)=\conv\{uu^\top \mid u\in E_k(A)\};$$
see, for example, \cite[Corollary 5.2.3, Corollary 5.2.4 (iii)]{borwein2006convex}.

The proof of Theorem~\ref{thm:infinitesimal} will be divided into two parts, corresponding to the two inequalities in \eqref{eqn:from_rse_to_reg}. We begin by establishing the first inequality ${\rm REG}(\mu)^{q_2-1}\lesssim {\rm RSE}(\mu)$; this is the content of the following theorem. The proof amounts to applying the fundamental theorem of calculus to a power of the function $\mathcal{J}$ along a geodesic $\mu_t$ joining a measure $\mu\in\cP_2\setminus\cE$ to a nearby measure in $\cE$.

\begin{theorem}[Small distance implies small value]\label{thm:small_dist_to_small_val}
Fix a measure $\mu\in \cP_2\setminus\cE$, constants $c,\varepsilon>0$, and a power $q\in [0,1)$. Suppose that \revised{for all measures $\nu\in\cP_2\setminus\cE$ satisfying $W_2(\mu,\nu)<W_2(\mu,\cE)+\varepsilon$,} the estimate holds:
$$\min_{u\in E_k(\nu)}{\EE}_{\nu}\|DF(x)^*[uu^\top]\|^2_2\leq c\cdot \mathcal{J}(\nu)^{2q}.$$
Then
$$\mathcal{J}(\mu)^{1-q} \leq \sqrt{c}\,(1-q) \cdot W_2(\mu, \cE).$$
\end{theorem}

\begin{proof}
\revised{Given any $\delta\in(0,\varepsilon]$, select a measure $\bar\nu\in\cE$ satisfying $W_2(\mu,\bar\nu)<W_2(\mu,\cE)+\delta$,} and let $\pi\in \Pi(\bar\nu,\mu)$ be an optimal transport plan from $\bar\nu$ to $\mu$. Define the functions $\rho_t(x,y)=(1-t)x+t y$ for all $t\in[0,1]$ and $x,y\in\R^d$. Then the curve $\mu_t=(\rho_t)_{\#}\pi$ is a constant speed geodesic in $\cP_2$ from $\bar\nu$ to $\mu$ \cite[Theorem 5.27]{santambrogio2015optimal}:
$$W_2(\mu_t,\mu_s)=|t-s|\cdot W_2(\mu,\bar\nu)\qquad \forall t,s\in [0,1].$$
Now define the curve $\gamma\colon [0,1]\to \bS_+^k$ by setting $\gamma(t)={\EE}_{\mu_t} F(x)$. We would like to compute $\dot{\gamma}(t)$ by interchanging differentiation and integration in the expression
\begin{equation}\label{eqn:exchange2}
\dot{\gamma}(t)=\frac{d}{dt}\,{\EE}_{\pi} F((1-t)x+ty).
\end{equation}
To this end, we bound the derivative of the integrand uniformly in $t$:
$$\|DF((1-t)x+ty)[y-x]\|_{F}\leq L(1+\|x\|_2+\|y\|_2)\|y-x\|_2.$$
Applying H\"{o}lder's inequality, we see that this upper bound is $\pi$-integrable. Therefore, interchanging differentiation and integration in \eqref{eqn:exchange2} yields the expression
$\dot{\gamma}(t)={\EE}_{\pi} DF((1-t)x+ty)[y-x]$.

It is clear that $\gamma$ is absolutely continuous and therefore, using the supdifferential chain rule for concave functions \cite[Lemma 3.3, p. 73]{brezis1973ope}, we deduce that for almost every $t\in (0,1)$ we have
\begin{equation}\label{eqn:running_ass_continued}
\frac{d}{dt} \mathcal{J}(\mu_t)=\frac{d}{dt}(\lambda_{\min}\circ\gamma)(t)=\langle U_t, 
\dot\gamma(t)\rangle\qquad \forall U_t\in \partial \lambda_{\min}(\gamma(t)).
\end{equation}
\revised{Define $t_0 := \max\{t\in[0,1]\mid \cJ(\mu_t)=0\}$ and observe $t_0\in[0,1)$ and, for all $t\in(t_0,1]$, we have $\mu_t\in\cP_2\setminus\cE$ and $W_2(\mu,\mu_t)=(1-t)\cdot W_2(\mu,\bar\nu)<W_2(\mu,\cE)+\varepsilon.$ Thus, by assumption, for each $t\in(t_0,1)$,} we may choose $U_t\in\partial \lambda_{\min}(\gamma(t))$ satisfying
${\EE}_{\mu_t}\|DF(x)^*[U_t]\|^2_2\leq c\cdot\mathcal{J}(\mu_t)^{2q}$. Continuing with \eqref{eqn:running_ass_continued}, we successively compute
\begin{align}
\frac{d}{dt} \mathcal{J}(\mu_t)&={\EE}_{\pi}\langle DF((1-t)x+ty)^*[U_t],y-x\rangle\\
&\leq {\EE}_{\pi}\big[\|DF((1-t)x+ty)^*[U_t]\|_{2}\cdot \|y-x\|_2\big]\\
&\leq  \sqrt{{\EE}_{\pi}\|DF((1-t)x+ty)^*[U_t]\|^2_{2}}\cdot \sqrt{{\EE}_{\pi}\|y-x\|_2^2}\label{eqn:holder}\\
&\leq \sqrt{c}\cdot W_2(\mu,\bar\nu)\cdot  \mathcal{J}(\mu_t)^q
\end{align}
for a.e. $t\in(t_0,1)$, where \eqref{eqn:holder} follows from H\"{o}lder's inequality.
Raising $\mathcal{J}(\mu_t)$ to power $1-q$, we deduce
$$\frac{d}{dt}\mathcal{J}(\mu_t)^{1-q}\leq \sqrt{c}\,(1-q)\cdot W_2(\mu,\bar\nu)\qquad \textrm{for a.e. }t\in (t_0,1).$$
Integrating both sides from $t=t_0$ to $t=1$ and leveraging absolute continuity, we conclude
\begin{equation}\label{eqn:final_punch}
 \mathcal{J}(\mu)^{1-q}-\mathcal{J}(\mu_{t_0})^{1-q}\leq (1-t_0)\cdot\sqrt{c}\,(1-q)\cdot W_2(\mu,\bar\nu).
 \end{equation}
 \revised{Taking into account the equality $\mathcal{J}(\mu_{t_0})=0$ and the estimate $W_2(\mu,\bar\nu) < W_2(\mu,\cE)+\delta$, we obtain $$\mathcal{J}(\mu)^{1-q} < \sqrt{c}\,(1-q)\cdot\big(W_2(\mu,\cE)+\delta\big).$$ This last inequality holds for all $\delta\in(0,\varepsilon]$, demonstrating $\mathcal{J}(\mu)^{1-q} \leq \sqrt{c}\,(1-q) \cdot W_2(\mu, \cE)$.}
 \end{proof}

Next, we pass to the reverse inequality 
${\rm RSE}(\mu)\lesssim{\rm REG}(\mu)^{q_1-1}$, which is a more substantive conclusion. The main tool we will use is the characterization of an ``error bound property'' using the slope. In what follows, for any real number $r$, the symbol $r_+=\max\{0,r\}$ denotes its positive part.

\begin{definition}[Slope]
Consider a function $f\colon\cX\to \R\cup\{+\infty\}$ defined on a metric space $(\cX,\sd)$. The {\em slope} of $f$ at any point $x$ with $f(x)$ finite is defined by 
$$|\nabla f|(x) := \limsup_{x'\to x} \frac{(f(x)-f(x'))_+}{\sd(x,x')}.$$
\end{definition}

Importantly, if the slope is large on a neighborhood, then the function must decrease significantly. This is the content of the following theorem; see \cite[Basic Lemma, Chapter 1]{ioffe2000metric} or \cite[Lemma 2.5]{drusvyatskiy2015curves}.

\begin{theorem}[Decrease principle]\label{thm:decr_princip}
Consider a lower semicontinuous function $f\colon\cX\to \R\cup\{+\infty\}$ on a complete metric space $(\cX,\sd)$. Fix a point $x$ with $f(x)$ finite, and suppose that there are constants $\alpha<f(x)$
 and $r, \kappa>0$ such that the implication holds:
 $$ \alpha<f(u)\leq f(x)\quad \textrm{and}\quad \sd(u,x)\leq r\qquad \Longrightarrow \qquad |\nabla f|(u)\geq \kappa.$$ 
 If the inequality $f(x)-\alpha< \kappa r$ is valid, then  the estimate holds:
 $$\sd(x,[f\leq \alpha])\leq \kappa^{-1}(f(x)-\alpha).$$ 
 \end{theorem}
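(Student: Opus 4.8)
The plan is to derive the decrease principle from Ekeland's variational principle applied to the truncated function $g:=(f-\alpha)_+=\max\{f-\alpha,\,0\}$. This $g$ is nonnegative (hence bounded below), lower semicontinuous (being the composition of the lsc function $f$ with the continuous nondecreasing map $t\mapsto(t-\alpha)_+$), and finite at $x$ since $g(x)=f(x)-\alpha<+\infty$; we may assume $f(x)>\alpha$, as otherwise $x\in[f\le\alpha]$ and the claim is trivial.

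First I would choose a parameter $\lambda$ with $\kappa^{-1}(f(x)-\alpha)<\lambda<r$; such a $\lambda$ exists precisely because of the standing hypothesis $f(x)-\alpha<\kappa r$. Then I would pick $\varepsilon$ with $f(x)-\alpha<\varepsilon<\kappa\lambda$, which is possible since $\kappa\lambda>f(x)-\alpha$. Because $g\ge0$ and $g(x)=f(x)-\alpha<\varepsilon$, we have $g(x)<\inf g+\varepsilon$, so Ekeland's variational principle (using completeness of $\cX$) produces a point $u$ satisfying: (a) $g(u)\le g(x)$; (b) $\sd(u,x)\le\lambda$; and (c) $g(u)-g(w)<\tfrac{\varepsilon}{\lambda}\,\sd(u,w)$ for every $w\ne u$.

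Next I would show $u\in[f\le\alpha]$ by contradiction. Suppose $f(u)>\alpha$. Then (a) gives $(f(u)-\alpha)_+=g(u)\le g(x)=f(x)-\alpha$, hence $f(u)\le f(x)$; together with (b) and $\lambda<r$, the slope hypothesis of the theorem yields $|\nabla f|(u)\ge\kappa$. On the other hand, since $f$ is lsc and $f(u)>\alpha$, there is a neighborhood of $u$ on which $f>\alpha$ and therefore $g=f-\alpha$; on that neighborhood (c) reads $f(u)-f(w)<\tfrac{\varepsilon}{\lambda}\sd(u,w)$ for $w\ne u$, so $(f(u)-f(w))_+/\sd(u,w)\le\varepsilon/\lambda$ there, and taking the limsup as $w\to u$ gives $|\nabla f|(u)\le\varepsilon/\lambda<\kappa$, a contradiction. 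Hence $f(u)\le\alpha$, and by (b), $\sd(x,[f\le\alpha])\le\sd(x,u)\le\lambda$. Since $\lambda$ was an arbitrary number in $(\kappa^{-1}(f(x)-\alpha),r)$, letting $\lambda\downarrow\kappa^{-1}(f(x)-\alpha)$ yields $\sd(x,[f\le\alpha])\le\kappa^{-1}(f(x)-\alpha)$.

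I expect the main obstacle to be the parameter bookkeeping: one must keep Ekeland's perturbation ratio $\varepsilon/\lambda$ \emph{strictly} below $\kappa$---so that it genuinely contradicts the slope lower bound rather than merely matching it---while still allowing $\lambda$ to be pushed arbitrarily close to $\kappa^{-1}(f(x)-\alpha)$ from above. This is exactly where the strict inequality $f(x)-\alpha<\kappa r$ enters, and it is also what confines the whole argument to the ball of radius $r$ about $x$, on which the slope hypothesis is available. A secondary technical point, handled by lower semicontinuity of $f$ at $u$, is that the slope of the truncation $g$ agrees with that of $f$ at any point where $f>\alpha$, so that (c) really controls $|\nabla f|(u)$.
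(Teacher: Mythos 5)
Your proof is correct. The paper does not prove this theorem itself---it is quoted from the literature (Ioffe's ``Basic Lemma'' and \cite[Lemma 2.5]{drusvyatskiy2015curves})---and your argument, applying Ekeland's variational principle to the truncation $(f-\alpha)_+$ with parameters $\varepsilon/\lambda<\kappa$ and using lower semicontinuity to identify the slopes of $f$ and of the truncation where $f>\alpha$, is exactly the standard proof given in those references; the parameter bookkeeping and the limit $\lambda\downarrow\kappa^{-1}(f(x)-\alpha)$ are handled correctly.
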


We will apply this theorem to a power of the function $\cJ$. The key step therefore is to estimate the slope of $\mathcal{J}$ \revised{(with respect to the metric $W_2$ on $\cP_2$)}. This is the content of the following lemma.

\begin{lemma}[Slope computation]\label{lem:slope}
 For any measure $\mu\in \cP_2$, the estimate holds:
$$|\nabla \mathcal{J}|(\mu)\geq \max_{u\in E_k(\mu)}\sqrt{{\EE}_{\mu}\|DF(x)^*[uu^\top]\|^2_2}.$$
\end{lemma}
\begin{proof}
We begin by writing  $\mathcal{J}(\mu)=(\lambda_{\min} \circ G)(\mu)$, where we define the map $G(\mu):={\EE}_{\mu} F(x)$. 
Next, fix a measure $\mu\in \cP_2$ and a matrix $U\in \partial \lambda_{\min}(G(\mu))$, and define the transport map $T(x) := x-DF(x)^*[U]$. 
Observe that $I-T$ is square $\mu$-integrable since 
$${\EE}_{\mu} \|x-T(x)\|_2^2={\EE}_{\mu} \|DF(x)^*[U]\|^2_2\leq L^2\cdot\|U\|_F^2\cdot {\EE}_{\mu} (1+\|x\|_2)^2<\infty.$$
Define now the curve $\gamma\colon[0,1]\to \cP_2$ by setting $\gamma(t)=(I+t(T-I))_{\#}\mu$. Note that from \eqref{eq:detcouplingupperbound}, we have
\begin{equation}\label{eqn:curve_dist}
W^2_2(\gamma(t),\gamma(0))\leq t^2\,{\EE}_{\mu}\|x-T(x)\|^2_2.
\end{equation}
Next, from the concavity of $\lambda_{\min}$ we deduce
\begin{equation}\label{eqn:concavity}
\mathcal{J}(\gamma(t))-\mathcal{J}(\gamma(0))\leq \langle U,(G\circ \gamma)(t)-(G\circ \gamma)(0) \rangle.
\end{equation}
We would like to compute $\frac{d}{dt}\langle U,G\circ\gamma(t)\rangle$ by interchanging differentiation and integration in the expression
\begin{equation}\label{exchange_int_doiff}
\frac{d}{dt}\langle U,G\circ\gamma(t)\rangle=\frac{d}{dt} \EE_{\mu} \langle U, F(x+t(T(x)-x))\rangle.
\end{equation}
To this end, we bound the derivative of the integrand uniformly in $t$:
\begin{align*}
|\langle U, DF(x+t(T(x)-x))[T(x)-x]\rangle|
&\leq \|U\|_F\cdot\|DF(x+t(T(x)-x))[T(x)-x]\|_F\\
&\leq L(1+\|x+t(T(x)-x)\|_2)\cdot \|T(x)-x\|_2\\
&\leq L(1+\|x\|_2+t\|T(x)-x\|_2) \cdot \|T(x)-x\|_2\\
&\leq L\Big(\|T(x)-x\|_2+\tfrac{1}{2}\|x\|_2^2+\tfrac{3}{2}\|x-T(x)\|^2_2\Big).
\end{align*}
Clearly, the right-side is $\mu$-integrable and therefore by the dominated convergence theorem, we may interchange differentiation and integration in \eqref{exchange_int_doiff}, yielding:
\begin{equation}\label{eqn:deriv_curve_inner_prod}
\frac{d}{dt}\langle U,G\circ\gamma(t)\rangle\vert_{t=0}=\EE_{\mu} \langle DF(x)^*[U],T(x)-x\rangle= -{\EE}_{\mu}\|T(x)-x\|^2_2.
\end{equation}
In particular, we deduce $(\mathcal{J}\circ \gamma)(t)<(\mathcal{J}\circ \gamma)(0)$ for all small $t>0$. Therefore, dividing \eqref{eqn:concavity} by $W_2(\gamma(t),\gamma(0))$ and taking the limit as $t\downarrow 0$ yields
\begin{align}
|\nabla \mathcal{J}|(\mu)&\geq \lim_{t\downarrow 0}\frac{\mathcal{J}(\gamma(0))-\mathcal{J}(\gamma(t))}{W_2(\gamma(0),\gamma(t))}\\
&\geq \lim_{t\downarrow 0}\frac{\mathcal{J}(\gamma(0))-\mathcal{J}(\gamma(t))}{ t\sqrt{{\EE}_{\mu}\|x-T(x)\|^2_2}}\label{eqn:usenoneg}\\
&\geq \left\langle U,\lim_{t\downarrow 0}\frac{(G\circ \gamma)(0)-(G\circ \gamma)(t)}{t\sqrt{{\EE}_{\mu}\|x-T(x)\|^2_2}} \right\rangle\\
&=\sqrt{{\EE}_{\mu}\|T(x)-x\|^2_2}\label{eqn:deriv_final}\\
&=\sqrt{{\EE}_{\mu}\|DF(x)^*[U]\|^2_2},
\end{align}
where the estimate \eqref{eqn:usenoneg} follows from \eqref{eqn:curve_dist} and the equality \eqref{eqn:deriv_final} follows from \eqref{eqn:deriv_curve_inner_prod}.
\end{proof}

 Finally, combining the decrease principle (Theorem~\ref{thm:decr_princip}) and the estimate on the slope of $\cJ$ (Lemma~\ref{lem:slope}), we arrive at the main result.

\begin{theorem}[Small value implies small distance]\label{thm:small_val_to_dista}
Fix a measure $\mu\in\cP_2\setminus\cE$, a constant $c>0$, a radius $r>0$, and a power $q\in [0,1)$. Suppose that for all measures $\nu\in  [0<\mathcal{J}\leq \mathcal{J}(\mu)]\cap \overline{\mathbb{B}}_2(\mu; r)$, the estimate holds:
$$\max_{u\in E_k(\nu)}{\EE}_{\nu}\|DF(x)^*[uu^\top]\|^2_2\geq c\cdot \lambda_{\min}({\EE}_{\nu}F(x))^{2q}.$$
Then
$$W_2(\mu, [\mathcal{J}= 0])\leq \tfrac{1}{(1-q)\sqrt{c}} \cdot\mathcal{J}(\mu)^{1-q}$$
provided $r$ is large enough so that $\mathcal{J}(\mu)^{1-q}< r(1-q)\sqrt{c}$.
\end{theorem}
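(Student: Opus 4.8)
The plan is to derive the bound from the decrease principle (Theorem~\ref{thm:decr_princip}), applied not to $\mathcal{J}$ itself but to the reparametrized functional $h\colon\cP_2\to[0,\infty]$ given by $h(\mu)=\mathcal{J}(\mu)^{1-q}$, where $\mathcal{J}(\mu)=\lambda_{\min}(\EE_{\mu}F(x))$. We may assume $\mathcal{J}(\mu)>0$, since otherwise $\mu\in[\mathcal{J}=0]$ and the inequality is trivially true. First I would record the routine facts that make $h$ an admissible input to Theorem~\ref{thm:decr_princip}: (i) $h\geq 0$ pointwise, because $\EE_{\mu}F(x)\in\bS^k_+$, so that $[h\leq 0]=[h=0]=[\mathcal{J}=0]$; (ii) $h$ is continuous, hence lower semicontinuous, on the complete metric space $(\cP_2,W_2)$, since $A\mapsto\lambda_{\min}(A)$ is continuous and $\mu\mapsto\EE_{\mu}F(x)$ is $W_2$-continuous --- here the growth bound $\|DF(x)\|_{\rm op}\leq L(1+\|x\|_2)$ forces $F$ to have at most quadratic growth, and $W_2$-convergence propagates through expectations of functions of quadratic growth; and (iii) $h(\mu)<\infty$ by the standing integrability of $F$ against measures in $\cP_2$.

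The heart of the argument is a \emph{uniform} lower bound on the metric slope of $h$ over the region $R:=[0<\mathcal{J}\leq\mathcal{J}(\mu)]\cap\mathbb{B}_2(\mu;r)$. For $\nu\in R$ one has $\mathcal{J}(\nu)>0$, so the chain rule for the slope under post-composition with the increasing $C^1$ function $t\mapsto t^{1-q}$ (which one can alternatively re-derive by repeating verbatim the curve construction in the proof of Lemma~\ref{lem:slope}) yields $|\nabla h|(\nu)\geq(1-q)\,\mathcal{J}(\nu)^{-q}\,|\nabla\mathcal{J}|(\nu)$. By Lemma~\ref{lem:slope}, $|\nabla\mathcal{J}|(\nu)\geq\sup_{u\in E_k(\nu)}\sqrt{\EE_{\nu}\|DF(x)^*[uu^\top]\|_2^2}$, and the standing hypothesis of the theorem bounds the quantity under the square root from below by $c\cdot\lambda_{\min}(\EE_{\nu}F(x))^{2q}$, hence $|\nabla\mathcal{J}|(\nu)\geq\sqrt{c}\,\mathcal{J}(\nu)^{q}$. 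Multiplying the two estimates, the factors $\mathcal{J}(\nu)^{-q}$ and $\mathcal{J}(\nu)^{q}$ cancel, giving $|\nabla h|(\nu)\geq(1-q)\sqrt{c}=:\kappa$ for every $\nu\in R$.

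Finally I would invoke Theorem~\ref{thm:decr_princip} with $f=h$, base point $\mu$, level $\alpha=0$, slope constant $\kappa=(1-q)\sqrt{c}$, and radius $r$: the implication ``$0<h(\nu)\leq h(\mu)$ and $W_2(\nu,\mu)\leq r$ $\Rightarrow$ $|\nabla h|(\nu)\geq\kappa$'' is exactly the slope bound just proved (its hypothesis region is precisely $R$), and the side condition $h(\mu)-\alpha<\kappa r$ is precisely the assumption $\mathcal{J}(\mu)^{1-q}<(1-q)r\sqrt{c}$. The decrease principle then gives $W_2(\mu,[h\leq 0])\leq\kappa^{-1}(h(\mu)-0)=\tfrac{1}{(1-q)\sqrt{c}}\,\mathcal{J}(\mu)^{1-q}$, and since $[h\leq 0]=[\mathcal{J}=0]$ this is the claimed estimate.

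I expect nothing deep to be at stake; the main work is careful bookkeeping at the interface with Theorem~\ref{thm:decr_princip}. One must check that the region on which the hypothesis supplies the Hölderian slope estimate ($0<\mathcal{J}\leq\mathcal{J}(\mu)$ together with $W_2(\cdot,\mu)\leq r$) coincides exactly with the region in which the decrease principle demands a lower bound on the slope, and that the reparametrization exponent $1-q$ is the \emph{only} choice turning that Hölderian estimate into a constant --- the cancellation of $\mathcal{J}(\nu)^{q}$ against $\mathcal{J}(\nu)^{-q}$ being precisely why an infinitesimal (slope) condition of Hölder type yields a global Lipschitz-type distance bound. A secondary point to pin down is the $W_2$-continuity of $\mu\mapsto\EE_{\mu}F(x)$, hence of $\mathcal{J}$, which rests on the quadratic growth of $F$ implied by the gradient bound.
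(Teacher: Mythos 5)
Your proposal is correct and follows essentially the same route as the paper: define $\mathcal{G}(\nu)=\mathcal{J}(\nu)^{1-q}$, use the chain rule for the slope together with Lemma~\ref{lem:slope} and the hypothesis to get the uniform bound $|\nabla\mathcal{G}|(\nu)\geq(1-q)\sqrt{c}$ on the relevant region, and then invoke the decrease principle with $\alpha=0$ and $\kappa=(1-q)\sqrt{c}$. The only cosmetic difference is that the paper cites continuity of $\mu\mapsto\EE_\mu F(x)$ directly from Villani's stability theorem, whereas you re-derive it from the quadratic growth of $F$; both are fine.
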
  
\begin{proof}
Define the function $\mathcal G(\nu):=\mathcal{J}(\nu)^{1-q}$ and note that the slope chain rule and Lemma~\ref{lem:slope} imply $$| \nabla \mathcal{G}|(\nu)=(1-q)\cdot \frac{| \nabla \mathcal{J}|(\nu)}{\mathcal{J}(\nu)^q}\geq (1-q)\sqrt{c}$$
whenever $0<\mathcal{G}(\nu)\leq \mathcal{G}(\mu)$ and $W_2(\nu,\mu)\leq r$. Applying Theorem~\ref{thm:decr_princip} to $\mathcal{G}$ with $\alpha=0$ and $\kappa=(1-q)\sqrt{c}$ completes the proof.
\end{proof}

Theorem~\ref{thm:infinitesimal} follows immediately from Theorems~\ref{thm:small_dist_to_small_val} and \ref{thm:small_val_to_dista}.

\section{Proofs from Section~\ref{section:pca}}
\subsection{Proof of Lemma~\ref{lem:pca_first}}\label{sec:proof_pca_first}
We first argue the inclusion $\supset$. Observe that for any measure $\mu\in \cP_2^{\circ}$, the set of maximizers of \eqref{eqn:basic_science0} is the intersection of the unit sphere $\mathbb{S}^{d-1}$ and the top eigenspace of $\Sigma_{\mu}$. Thus, if $\lambda_1(\Sigma_{\mu})=\lambda_2(\Sigma_{\mu})$, then none of the maximizers are isolated, so they are unstable and therefore $\mu\in\cE$. 

To see the reverse inclusion $\subset$, fix a measure $\mu\in \cP_2^{\circ}$ and suppose that the top two eigenvalues $\lambda_1$ and $\lambda_2$ of $\Sigma_{\mu}$ are distinct. Then, up to sign, the normalized top eigenvector $v_{\star}$ of $\Sigma_{\mu}$ is the unique maximizer of \eqref{eqn:basic_science0}. It remains to verify that $v_{\star}$ is a tilt-stable maximizer of \eqref{eqn:basic_science0}. To this end, define the Lagrangian function $\mathcal{L}(v,\lambda):=-\frac{1}{2}\EE_{\mu} \langle v, x\rangle^2+\frac{\lambda}{2} (\|v\|^2_2 - 1)$. Then the following equalities hold: 
$$\nabla_v \mathcal{L}(v,\lambda)=(\lambda I-\Sigma_{\mu})v\qquad \textrm{and}\qquad \nabla^2_{vv} \mathcal{L}(v,\lambda)=\lambda I-\Sigma_{\mu}.$$
In particular, the first equality shows that the optimal Lagrange multiplier $\lambda_\star$ is $\lambda_1$. Let $V\in \R^{d\times (d-1)}$ be a matrix with an orthonormal $\Sigma_{\mu}$-eigenbasis for $v_{\star}^\perp$ as its columns. Then an elementary computation yields
\begin{equation}\label{eqn:yoyo}
\min_{y\in \mathbb{S}^{d-2}} \langle \nabla^2_{vv} \mathcal{L}(v_{\star},\lambda_{\star})(Vy),Vy\rangle=\min_{y\in \mathbb{S}^{d-2}} \sum_{i=2}^{d} (\lambda_1-\lambda_i) y_{i-1}^2=\lambda_1-\lambda_2>0.
\end{equation}
Therefore $v_{\star}$ is a tilt-stable maximizer of  \eqref{eqn:basic_science0} and hence $\mu \notin \cE $. Moreover, the claimed equality ${\rm REG} (\mu)^{-1}=\lambda_1-\lambda_2$ follows directly from \eqref{eqn:yoyo}, thereby completing the proof.

\subsection{Proof of Theorem~\ref{thm:rse_pca1}}\label{sec:proof_pca1}
This follows directly by applying Theorem~\ref{thm:distance_est_spectral} with $\cG=\cE$ from Lemma~\ref{lem:pca_first} and $G={\{v\in \R^d_+ \mid v^{(1)}=v^{(2)}\}}$, where $v^{(i)}$ denotes the $i$'th largest coordinate value of $v$.

\subsection{Proof of Lemma~\ref{lem:PCA_multi}}\label{section:proof_pca_multi}
	Henceforth, fix a measure $\mu\in \cP_2^{\circ}$  and define the shorthand $\lambda:=\lambda(\Sigma_{\mu})$.
	Let's dispense first with the simple direction $\supset$. To this end, suppose that equality $\lambda_q=\lambda_{q+1}$ holds. Then we may select two orthonormal sets $\cU:=\{u_1,\ldots,u_q\}$ and $\cU':=\{u_1,\ldots, u_{q-1},u_q'\}$ with $\langle u_q, u_q'\rangle =0$ that are contained in the span of the eigenspaces corresponding to the top $q$ eigenvalues. We may further interpolate between these two sets via $\cU_t:=\{u_1,\ldots,tu_q+(1-t)u_q'\}$ for $t\in [0,1]$. The orthogonal projections onto the span of each $\cU_t$ furnish a continuous path of distinct maximizers of \eqref{eqn:pca_brah}, which are therefore not tilt-stable. Thus $\mu \in \mathcal{E}$, as claimed.
  	
	We now establish the reverse inclusion $\subset$. Suppose therefore that $\lambda_q$ and $\lambda_{q+1}$ are distinct. We begin by conveniently parameterizing the Grassmannian manifold ${\rm Gr}(q, d)$ as follows. 
Define the $d \times d$ matrix $A:=\begin{bmatrix} I_{q} & 0\\
	0 & 0
\end{bmatrix}$.
Then, as in \cite[Section 2.1]{bendokat2024grassmann}, we may write ${\rm Gr}(q, d)$ as the orbit of $A$ under conjugation by orthogonal matrices:
$${\rm Gr}(q, d)= \{UAU^\top \mid U\in O(d)\}.$$
Fix a skew-symmetric $d \times d$ matrix $V:=\begin{bmatrix}
	V_1 & V_2\\
	-V_2^\top & V_4
\end{bmatrix}
$, where $V_2\in\R^{q \times (d-q)}$, and define the curve $\gamma\colon \R\to {\rm Gr}(q, d)$ by 
$$\gamma(t):=\exp(-tV)\cdot A\cdot\exp(tV).$$
Differentiating the curve $\gamma$ yields the expression
$$\dot\gamma(0)=AV-VA= \begin{bmatrix}
	0 & V_2\\
	V_2^{\top} & 0
\end{bmatrix}\!.$$
Moreover,  \cite[Section 2.3]{bendokat2024grassmann} shows that varying $V$ among all skew-symmetric $d \times d$ matrices yields the entire tangent space at $A$:  
$$T_{{\rm Gr}(q, d)}(A)=\left\{\begin{bmatrix}
	0 & B\\
	B^{\top} & 0 \end{bmatrix} ~\Bigg\vert~ B\in \R^{q\times (d-q)}\right\}\!.$$
Without loss of generality, we may assume that $\Sigma_{\mu}$ is diagonal with 
$\Sigma_{\mu}={\rm Diag}(\lambda)$. Then $A$ is clearly the unique maximizer of the problem \eqref{eqn:pca_brah}. Consider now the second-order expansion 
$$(f\circ\gamma)(t)=\langle \gamma(t), \Sigma_{\mu}\rangle=f(A)+t\underbrace{\langle AV-VA,\Sigma_{\mu}\rangle}_{=0} + \tfrac{1}{2}t^2\langle AV^2 + V^2A - 2VAV, \Sigma_{\mu}\rangle+O(t^3).$$ 
In particular,  
$$\nabla^2_{{\rm Gr}(q, d)} f(A)[\dot\gamma(0), \dot\gamma(0)] = (f\circ \gamma)''(0) = \langle AV^2 + V^2A -2VAV, \Sigma_{\mu}\rangle.$$
Taking into account the definition of $A$, a quick computation shows
\begin{align*}
	AV^2 + V^2A - 2VAV &=\begin{bmatrix} 2(V_1^2-V_2V_2^\top) & V_1V_2 + V_2V_4\\
		(V_1V_2 + V_2V_4)^\top &0 \end{bmatrix} - 2\begin{bmatrix} V_1^2 & V_1 V_2\\ (V_1 V_2)^\top & -V_2^\top V_2\end{bmatrix}\\
	&=\begin{bmatrix}-2V_2V_2^\top & V_2V_4-V_1V_2\\
		(V_2V_4-V_1V_2)^\top & 2V_2^\top V_2\end{bmatrix}\!.
\end{align*}
Then taking the trace product with the diagonal matrix $\Sigma_{\mu}={\rm Diag}(\lambda)$ yields 
\begin{align}
	(f\circ \gamma)''(0)&=-2\langle V_2V_2^\top,{\rm Diag}(\lambda_{1:q})\rangle + 2\langle V_2^\top V_2,{\rm Diag}(\lambda_{q+1:d})\rangle\\
	&\leq -2\langle V_2V_2^\top,\lambda_q I_{q}\rangle + 2\langle V_2^\top V_2,\lambda_{q+1} I_{d-q}\rangle\label{eqn:bluh}\\
	&= -(\lambda_q-\lambda_{q+1})\cdot 2\|V_2\|^2_F\\
	&= -(\lambda_q-\lambda_{q+1})\|\dot\gamma(0)\|^2_F.
\end{align}
Consequently, the covariant Hessian $\nabla^2_{{\rm Gr}(q, d)} f(A)$ is negative definite on the tangent space $T_{{\rm Gr}(q, d)}(A)$. Therefore $A$ is a tilt-stable maximizer of the problem \eqref{eqn:pca_brah} and hence $\mu\notin \cE$. Moreover, if we take $V_2$ to be any scalar multiple of the $q \times (d-q)$ matrix with $(q,1)$'st entry $1$ and all other entries $0$, then \eqref{eqn:bluh} holds with equality. Hence ${\rm REG} (\mu)^{-1}=\lambda_q - \lambda_{q+1}$, as claimed.

\subsection{Proof of Theorem~\ref{thm:rse_pca2}}\label{sec:proof_pca2}
This follows directly by applying Theorem~\ref{thm:distance_est_spectral} with $\cG=\cE$ from  Lemma~\ref{lem:PCA_multi} and $G={\{v\in \R^d_+ \mid v^{(q)}=v^{(q+1)}\}}$, where $v^{(i)}$ denotes the $i$'th largest coordinate value of $v$.

\section{Proofs from Section~\ref{sec:glm}}
\subsection{Proof of Lemma~\ref{lem:qmle}}\label{sec:proof_qmle}
	First, observe  
  		\begin{align*}
  		\nabla f(\beta) =  {\EE}\big[ ( h'(\dotp{\x, \beta}) - y)\x\big] &=  {\EE} \big[{\EE}\big[ h'(\dotp{\x, \beta}) - y \!\mid\! \x\big]\x\big]\\
  		&= {\EE} \big[ (h'(\dotp{\x, \beta}) - h'(\dotp{\x, \betas}))x\big].
  		\end{align*}
  	Therefore, the equality $\nabla f(\betas)=0$ holds and hence $\betas$ is a critical point for the problem with optimal Lagrange multipliers $\lambda_\star=0$. Differentiating again yields the expression for the Hessian 
	\begin{equation}\label{eqn:stup_H}
  	H:=\nabla^2 f(\betas)= {\EE}\big[h''(\langle \x, \beta_{\star} \rangle) \x \x^{\top}\big].
	\end{equation}
	 Note that $H$ is positive semidefinite since $h''> 0$. Consequently, the set of ill-conditioned distributions $\cE$ corresponds to those distributions $\mu\in\cQ$ of $\x$ for which  $\ker(H)$ nontrivially intersects $\mathcal{T}$. Clearly, a vector $v$ lies in $\ker(H)$ if and only if $\langle Hv,v\rangle=0$, or equivalently ${\EE}\big[h''(\dotp{\x, \betas})\langle \x, v \rangle^2\big]=0$. Taking into account the assumption $h''>0$, this occurs precisely when $v$ lies in the nullspace of $\Sigma_{\mu}$. Thus  $\cE$ is comprised of all measures $\mu\in \Q$ satisfying
	$\mathcal{T}\cap \ker(\Sigma_{\mu})\neq \{0\}.$ Finally, it follows directly from \eqref{eqn:stup_H} that if for some $a,b>0$ the inequalities $a \leq h''(\langle x,\beta_{\star}\rangle) \leq b$ hold for $\mu$-almost every $x$, then $\lambda_{\min}(H\vert_{\cT}) \in \lambda_{\min}(\Sigma_{\mu}\vert_{\cT})\cdot [a,b]$, thereby completing the proof.

\subsection{Proof of Theorem~\ref{thm:RSE_QMLE}}\label{sec:proof_rse_qmle}
  This follows directly from Lemma~\ref{lem:qmle} and Theorem~\ref{prop:Sdist}.

\section{Proofs from Section~\ref{sections:single-index}}
\subsection{Proof of Lemma~\ref{lem:basic_except_1_phase}}\label{sec:proof_basic_phase}
We begin by verifying the claim for the formulation \eqref{eq:phaseretrival}. 
To this end, a quick computation shows $\nabla f(\beta_{\star})=0$ and 
$
\nabla^{2} f(\beta_{\star}) = \EE_{\mu} \dotp{x, \betas}^{2}xx^{\top} = \hat{\Sigma}_{\mu},
$
where $\mu = \sD_x$. Therefore we deduce 
$$\lambda_{\min}(\nabla^2 f(\beta_{\star}))=\min_{v\in \mathbb{S}^{d-1}} \EE_{\mu}  \langle x,\beta_{\star}\rangle^2\langle x,v\rangle^2.$$
In particular, $\nabla^2 f(\beta_{\star})$ is singular if and only if the support of $\mu$ is contained in $\betas^\perp \cup v^\perp$ for some $v\in\mathbb{S}^{d-1}$. If $\mu\in\cP_4 \setminus \cE$, then clearly \eqref{eqn:Lweird_asf2} holds with $(\lb,\ub)=(1,1)$.

Next, we verify the claim for the formulation \eqref{eq:phaseretrival_rank1}. To this end, let $\cM$ denote the manifold of symmetric PSD rank-one matrices:
$$\cM=\{M\in \bS^d_+ \mid \rank(M)=1\}.$$
A quick computation yields
$$\nabla f(M)= {\EE_\mu} \big\langle M-\Ms,xx^\top\big\rangle xx^\top \qquad \textrm{and}\qquad \nabla^2 f(\Ms)[\Delta,\Delta]= {\EE_\mu} \big\langle \Delta ,xx^\top\big\rangle^2.$$
In particular, equality $\nabla f(\Ms)=0$ holds and therefore the optimal Lagrange multipliers $\lambda_{\star}$ are zero. Hence the Hessian of the Lagrangian at $(\Ms,\lambda_{\star})$ coincides with $\nabla^2 f(\Ms)$. Classically, if we form the factorization $\Ms=\betas\betas^\top$, then the tangent space $\cT$ to $\cM$ at $\Ms$ can be written as
$$\mathcal{T}=\{\betas v^{\top} +v \betas^{\top} \mid v\in \R^d\}.$$
Consequently, for any $\Delta=\betas v^{\top} +v \betas^{\top}$ we compute 
$$\nabla^2 f(\Ms)[\Delta,\Delta]= {\EE_\mu} \big\langle \Delta ,xx^\top\big\rangle^2=4\EE_\mu \langle \betas,x\rangle^2\langle v,x\rangle^2 = 4\langle \hat{\Sigma}_{\mu}v, v \rangle.$$
Note $\|\Delta\|_F^2= 2\langle\betas, v\rangle^2+2\|\betas\|^2_2 \|v\|^2_2$ and therefore $\|\Delta\|_F^2/\|\betas\|^2_2 \|v\|^2_2 \in [2,4]$ whenever $v$ is nonzero. Hence $\Delta$ is nonzero whenever $v$ is nonzero. We therefore again deduce $\nabla^2 f(\Ms)[\Delta,\Delta]=0$ for some nonzero $\Delta\in \cT$ if and only if the support of $\mu$ is contained in $\betas^\perp \cup v^\perp$ for some $v\in\mathbb{S}^{d-1}$. Finally, if $\mu\in\cP_4 \setminus \cE$, then \eqref{eqn:Lweird_asf2} follows immediately with $(\lb,\ub)=\big(\tfrac{1}{2}\|\betas\|^2_2 , \|\betas\|^2_2\big)$.

\subsection{Proof of Theorem~\ref{pse_generalphase}}\label{sec:proof_pse_generalphase}
Define the set $\cK_v := \beta_{\star}^\perp \cup v^\perp$ for each unit vector $v\in\mathbb{S}^{d-1}$ and let $h\colon\mathbb{S}^{d-1}\rightarrow[0,\infty)$ be given by
\begin{equation*}
	h(v) = \mathop{\EE}_{x\sim \mu}{\dist}(x, \cK_v)^2=\mathop{\EE}_{x\sim\mu} \bigg[ \Big\langle x, \mfrac{\beta_{\star}}{\|\beta_{\star}\|}\Big\rangle^2 \wedge \big\langle x, v\big\rangle^2 \bigg].
\end{equation*}
Fatou's lemma directly implies that $h$ is lower semicontinuous and therefore admits a minimizer $u_\star\in\argmin_{u\in\mathbb{S}^{d-1}} h(u)$. Let  $s\colon\R^d\rightarrow \cK_{u_\star}$ be a Borel measurable selection of the metric projection $\sP_{\cK_{u_\star}}$. Define the pushforward measure $\bar \nu := s_{\#}\mu$. Clearly $\bar \nu$ lies in $\cE$ and hence
\begin{equation}\label{eqn:wambam}
    \inf_{\nu \in \cE} W_2^2(\mu, \nu) \leq W_2^2(\mu, \bar{\nu}) 
    	= \mathop{\EE}_{x\sim \mu}{\dist}(x,\cK_{u_\star})^2 = h(u_\star) 
	  \end{equation}
 with the first equality holding by \eqref{eq:detcouplingequality}. On the other hand, for any $\nu\in\cE$ there exists $w\in\mathbb{S}^{d-1}$ such that $\Supp(\nu)\subset\cK_w$ and hence
\begin{align*}
      h(u_\star) \leq h(w) = \mathop{\EE}_{x\sim \mu}{\dist}(x, \cK_w)^2 \leq W_2^2(\mu, \nu) 
  \end{align*}
with the last inequality holding by \eqref{eqn:reverse_ineq}. Taking the infimum over $\nu\in \cE$, we deduce that \eqref{eqn:wambam} holds with equality, thereby completing the proof.

\subsection{Proof of Theorem~\ref{thm:compute_for_gauss}}\label{sec:proof_computeforgauss}
 We will need the following two elementary lemmas.
\begin{lemma}\label{lem:correlation}
    If $(y_1,y_2)$ is a centered Gaussian vector with $\EE y_1^2=\sigma_1^2$, $\EE y_2^2=\sigma^2_2$, and $\EE y_1 y_2=\rho\sigma_1\sigma_2$ for some $\sigma_1, \sigma_2>0$ and $\rho\in[-1,1]$, then the following equalities hold:
   \begin{align*}
   \EE|y_1 y_2|&=\frac{2}{\pi}\bigg(\sqrt{1-\rho^2}+\rho\arcsin(\rho)\bigg)\sigma_1\sigma_2,\\
    \EE (y_1 y_2)^2&=(1+2\rho^2)\sigma_1^2\sigma^2_2.
    \end{align*}
\end{lemma}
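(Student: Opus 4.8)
\emph{Proof strategy.} The first move is to reduce to the standardized case $\sigma_1=\sigma_2=1$. Replacing $(y_1,y_2)$ by $(y_1/\sigma_1,\,y_2/\sigma_2)$ produces a centered Gaussian pair with unit variances and correlation $\rho$, and each side of the two identities is homogeneous of degree $2$ in $(\sigma_1,\sigma_2)$ jointly (i.e.\ scales by $\sigma_1\sigma_2$ for the first identity and by $\sigma_1^2\sigma_2^2$ for the second). So henceforth assume $\EE y_1^2=\EE y_2^2=1$ and $\EE y_1y_2=\rho$. The second identity is then immediate from the Isserlis/Wick formula: $\EE[y_1^2y_2^2]=\EE[y_1^2]\EE[y_2^2]+2(\EE[y_1y_2])^2=1+2\rho^2$. (Equivalently, write $y_2=\rho y_1+\sqrt{1-\rho^2}\,z$ with $z\sim\mathsf N(0,1)$ independent of $y_1$, expand $(y_1y_2)^2$, and use $\EE y_1^4=3$, $\EE[y_1^3z]=0$, $\EE[y_1^2z^2]=1$.)

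For the first identity I would set $f(\rho):=\EE|y_1y_2|$ and treat it as a function of the correlation. At $\rho=0$ the coordinates are independent, so $f(0)=(\EE|y_1|)^2=2/\pi$. Differentiating under the expectation via Price's theorem gives $f'(\rho)=\EE\!\left[\partial_{y_1}\partial_{y_2}|y_1y_2|\right]=\EE[\operatorname{sgn}(y_1)\operatorname{sgn}(y_2)]=\EE[\operatorname{sgn}(y_1y_2)]=2\,\PP(y_1y_2>0)-1$. The classical orthant probability (Sheppard's formula) $\PP(y_1y_2>0)=\tfrac12+\tfrac1\pi\arcsin\rho$ then yields $f'(\rho)=\tfrac2\pi\arcsin\rho$, and integrating from $0$ with $\int_0^\rho\arcsin t\,dt=\rho\arcsin\rho+\sqrt{1-\rho^2}-1$ gives $f(\rho)=\tfrac2\pi+\tfrac2\pi\bigl(\rho\arcsin\rho+\sqrt{1-\rho^2}-1\bigr)=\tfrac2\pi\bigl(\sqrt{1-\rho^2}+\rho\arcsin\rho\bigr)$, as claimed; the cases $\rho=\pm1$ follow by continuity (or directly, since then $y_2=\pm y_1$ a.s.\ and both sides equal $1$).

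The one point requiring care is the justification of differentiating under the integral sign, since $|y_1y_2|$ fails to be differentiable on the coordinate axes; this is handled by dominating the difference quotients of the bivariate Gaussian densities by an integrable function uniformly for $\rho$ in a compact subinterval of $(-1,1)$, or by first mollifying $|\cdot|$ and passing to the limit (the exceptional set has zero Gaussian measure, so the distributional second mixed derivative $\operatorname{sgn}(y_1y_2)$ is the relevant object). One also needs the orthant probability formula, which is standard and can be reproved in a line by rotating coordinates.

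If one prefers to sidestep both Price's theorem and Sheppard's formula, the same answer drops out of a direct polar computation: writing $y_1=R\cos\phi$ and $y_2=R\cos(\phi-\alpha)$ with $\cos\alpha=\rho$, where $R^2\sim\chi^2_2$ is independent of $\phi\sim\mathrm{Unif}[0,2\pi)$, one gets $\EE|y_1y_2|=\tfrac1\pi\int_0^{2\pi}|\cos\phi\cos(\phi-\alpha)|\,d\phi$ (using $\EE R^2=2$), and the elementary trigonometric integral $\int_0^{2\pi}|\cos\phi\cos(\phi-\alpha)|\,d\phi=\pi\cos\alpha-2\alpha\cos\alpha+2\sin\alpha$ (obtained from $\cos\phi\cos(\phi-\alpha)=\tfrac12[\cos\alpha+\cos(2\phi-\alpha)]$ and splitting the circle at the sign changes) gives the result after substituting $\alpha=\arccos\rho$. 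I expect the main ``obstacle'' to be purely bookkeeping: the rigorous interchange of limit and integral (in the Price's theorem route) or the careful evaluation of the split trigonometric integral (in the polar route); neither is deep.
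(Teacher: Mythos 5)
Your proof is correct, but it is genuinely more self-contained than the paper's on the first identity and uses a slightly different (equivalent) mechanism on the second. For $\EE(y_1y_2)^2$ the paper conditions on $y_2$ (the conditional law $y_1\mid y_2$ is Gaussian, then iterate expectations), whereas you invoke Isserlis/Wick or, equivalently, the decomposition $y_2=\rho y_1+\sqrt{1-\rho^2}\,z$; these are interchangeable one-line arguments. For $\EE|y_1y_2|$ the paper simply cites an external reference (Li--Wei, Corollary 3.1) and proves nothing, while you supply two complete derivations: Price's theorem combined with Sheppard's orthant formula, integrated from $f(0)=2/\pi$, and an elementary polar-coordinate computation via $y_1=R\cos\phi$, $y_2=R\cos(\phi-\alpha)$ with $R^2\sim\chi^2_2$ independent of $\phi$. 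I checked both: the antiderivative $\int_0^\rho\arcsin t\,dt=\rho\arcsin\rho+\sqrt{1-\rho^2}-1$ and the trigonometric integral $\pi\cos\alpha-2\alpha\cos\alpha+2\sin\alpha$ with $\alpha=\arccos\rho$ both reproduce $\tfrac{2}{\pi}\bigl(\sqrt{1-\rho^2}+\rho\arcsin\rho\bigr)$, and your normalization reduction is legitimate since both sides scale as $\sigma_1\sigma_2$ (resp.\ $\sigma_1^2\sigma_2^2$). The trade-off is the usual one: the paper's citation is shorter, your argument is verifiable in place; your polar route in particular avoids any delicate interchange of differentiation and expectation and would be the cleanest thing to actually include.
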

\begin{proof}
The first equation is proved for example in \cite[Corollary 3.1]{li2009gaussian}. To see the second equation, standard results show $y_1\!\mid\! y_2 \sim\mathsf{N}\big(\rho(\sigma_1/\sigma_2) y_2, (1-\rho^2)\sigma_1^2\big)$. Hence $\EE[y_1^2\!\mid\! y_2]=(1-\rho^2)\sigma_1^2+\frac{\rho^2\sigma_1^2}{\sigma_2^2}y_2^2$, and since $\EE y_2^4 = 3\sigma_2^4$, the tower rule now implies
$\EE (y_1 y_2)^2={\EE}\big[{\EE}[y_1^2 \!\mid\! y_2] y_2^2\big]=(1+2\rho^2)\sigma_1^2\sigma^2_2$.
\end{proof}

  \begin{lemma} \label{lem:simple-ub}
    Consider the function $\psi\colon [-1, 1] \rightarrow \RR$ given by $\psi(t) = \sqrt{1 - t^{2}} + t \arcsin(t)$ and the function $\phi \colon [-1, 1] \rightarrow \RR$ given by $\phi(t) = \big(\frac{\pi}{2} - 1\big)t^{2} + 1$. Then $\psi(t) \leq \phi(t)$ for all $t \in [-1, 1]$.
  \end{lemma}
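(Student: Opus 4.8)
The plan is to prove the pointwise inequality $\psi(t)\le\phi(t)$ on $[-1,1]$ by a straightforward one-variable calculus argument, exploiting the fact that both functions are even and agree at the endpoints of the relevant interval. First I would observe that $\psi(0)=\phi(0)=1$ and $\psi(\pm1)=\phi(\pm1)=\pi/2$, and that both $\psi$ and $\phi$ are even functions of $t$, so it suffices to establish the inequality on $[0,1]$. Define $g(t)=\phi(t)-\psi(t)=\left(\tfrac{\pi}{2}-1\right)t^2+1-\sqrt{1-t^2}-t\arcsin(t)$; the goal is to show $g(t)\ge 0$ on $[0,1]$.

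Next I would differentiate. A short computation gives
\[
g'(t) = (\pi-2)\,t + \frac{t}{\sqrt{1-t^2}} - \arcsin(t) - \frac{t}{\sqrt{1-t^2}} = (\pi-2)\,t - \arcsin(t),
\]
where the two $t/\sqrt{1-t^2}$ terms cancel (one coming from $\tfrac{d}{dt}\bigl(-\sqrt{1-t^2}\bigr)$ and one from the product rule on $t\arcsin t$). So $g'(t)=(\pi-2)t-\arcsin(t)$. I would then analyze the sign of $g'$ on $[0,1]$: since $\arcsin$ is convex on $[0,1]$ with $\arcsin(0)=0$ and $\arcsin(1)=\pi/2$, the chord from $(0,0)$ to $(1,\pi/2)$ is the line $t\mapsto (\pi/2)t$, which lies above $\arcsin(t)$; but the line $t\mapsto(\pi-2)t$ has smaller slope ($\pi-2\approx1.14 < \pi/2\approx1.57$), so this alone is not conclusive. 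Instead I would note $g'(0)=0$, $g'(1)=\pi-2-\pi/2=\pi/2-2<0$, and $g''(t)=(\pi-2)-\tfrac{1}{\sqrt{1-t^2}}$, which is decreasing in $t$, positive at $t=0$ (since $\pi-2>1$) and tending to $-\infty$ as $t\to1$. Hence $g''$ has a unique zero $t_0\in(0,1)$, so $g'$ increases on $[0,t_0]$ then decreases on $[t_0,1]$; combined with $g'(0)=0$ and $g'(1)<0$, this forces $g'$ to be positive on some interval $(0,t_1)$ and negative on $(t_1,1)$ for a single $t_1\in(0,1)$.

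Therefore $g$ increases on $[0,t_1]$ and decreases on $[t_1,1]$, so on $[0,1]$ the function $g$ attains its minimum at one of the endpoints; since $g(0)=0$ and $g(1)=\pi/2 - 1 + 1 - 0 - \pi/2 = 0$, we conclude $g(t)\ge 0$ throughout $[0,1]$, hence $\psi(t)\le\phi(t)$ on $[0,1]$, and by evenness on all of $[-1,1]$. The main (and only mild) obstacle is organizing the sign analysis of $g'$ cleanly — the cancellation in $g'$ makes the derivative pleasantly simple, and the second-derivative argument pins down the shape of $g$ without any numerical estimates beyond the elementary bounds $1<\pi-2<\pi/2<2$. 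I would present the proof in this order: reduce to $[0,1]$ by evenness, record the endpoint values, compute $g'$ and note the cancellation, compute $g''$ and deduce the up-then-down shape of $g'$, deduce the up-then-down shape of $g$, and finish by comparing the two endpoint values.
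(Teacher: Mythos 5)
Your proof is correct, and it takes a genuinely different route from the paper's. You work with the difference $g=\phi-\psi$ and run a second-derivative/shape analysis: the key observations — that $g'(t)=(\pi-2)t-\arcsin(t)$ after the cancellation, that $g''(t)=(\pi-2)-1/\sqrt{1-t^2}$ changes sign exactly once on $(0,1)$ (using $\pi-2>1$), and hence that $g$ is unimodal with $g(0)=g(1)=0$ — all check out, and the conclusion $g\ge 0$ on $[0,1]$ follows; evenness handles $[-1,1]$. The paper instead integrates the identity $\psi'(t)=\arcsin(t)$ against the Taylor series of $\arcsin$, writes $\psi(t)=1+\sum_{n\ge 0}\binom{2n}{n}\frac{t^{2n+2}}{4^n(2n+2)}$, factors out $t^2$, and bounds the remaining series by its value at $t=1$, which equals $\frac{\pi}{2}-1$; this is essentially a termwise comparison exploiting that all Taylor coefficients are nonnegative. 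The series argument is shorter once the expansion is written down and makes transparent \emph{why} the constant $\frac{\pi}{2}-1$ is sharp (it is the sum of the series at $t=1$), whereas your argument is fully elementary, avoids any series manipulation, and only needs the crude bounds $1<\pi-2<\frac{\pi}{2}<2$. Both are complete proofs; yours is a legitimate alternative.
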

  \begin{proof}
    Taking the derivative of $\psi$  at $t\in[-1,1]$ and using the fundamental theorem of calculus gives
    \begin{align}\label{eq:complex-series}
      \psi(t) = 1 + \int_{0}^{t} \arcsin(s) ds =  1+ \sum_{n = 0}^{\infty} {2n \choose n}\frac{t^{{2n + 2}}}{4^{n} (2n+1)(2n+2)},
    \end{align}
    where the last equality follows by integrating the Taylor expansion of $\arcsin(s)$. Therefore
    \begin{align*}
      \psi(t) &= 1+ t^{2 }\sum_{n = 0}^{\infty} {2n \choose n}\frac{t^{{2n}}}{4^{n} (2n+1)(2n+2)} \\ &\leq 1+ t^{2 }\sum_{n = 0}^{\infty} {2n \choose n}\frac{1}{4^{n} (2n+1)(2n+2)} = 1 + t^{2}\bigg(\frac{\pi}{2} - 1\bigg) = \phi(t),
    \end{align*}
    where the inequality follows since $|t| \leq 1$ and the penultimate equality follows from evaluating \eqref{eq:complex-series} at $t = 1$ and noting $\psi(1) = \pi/2$.  
    \end{proof}

 With these two lemmas in hand, we start the proof of Theorem~\ref{thm:compute_for_gauss}. Let us first verify \eqref{eqn:product}. To this end, let $u, v\in \mathbb{S}^{d-1}$ and notice that we may write $g_{\Sigma}(u,v)=\EE (y_1y_2)^2$, where we define the centered random variables $y_1:=\langle x,u\rangle$ and $y_2:=\langle x,v\rangle$ with $x \sim \mathsf{N}(0,\Sigma)$. We compute $\EE y_1^2=\langle \Sigma u,u\rangle$, $\EE y_2^2=\langle \Sigma v,v\rangle$, and $\EE y_1 y_2=\langle \Sigma u,v\rangle$. Therefore, Lemma~\ref{lem:correlation} directly implies 
  $$g_{\Sigma}(u,v)=\langle \Sigma u,u\rangle\langle \Sigma v,v\rangle+2\langle \Sigma u,v\rangle^2=\|\Sigma^{1/2}u\|_2^2\cdot\|\Sigma^{1/2}v\|_2^2+2\langle \Sigma^{1/2}u,\Sigma^{1/2}v \rangle^2.$$
 Consequently, applying the Cauchy-Schwarz inequality yields the two-sided bound
 $$\|\Sigma^{1/2}u\|_2^2\cdot\|\Sigma^{1/2}v\|_2^2\leq g_{\Sigma}(u,v)\leq 3\|\Sigma^{1/2}u\|_2^2\cdot\|\Sigma^{1/2}v\|_2^2.$$
 Taking the infimum over $v\in\mathbb{S}^{d-1}$ completes the proof of \eqref{eqn:product}.
  
  Next, we verify~\eqref{eqn:min}.
 Notice that the upper bound in \eqref{eqn:min} follows trivially since for any $u\in \mathbb{S}^{d-1}$,
  $$
  \min_{v \in \mathbb{S}^{d-1}} h_{\Sigma} (u, v)=  \min_{v \in \mathbb{S}^{d-1}} {\EE} \big[ \langle x, u\rangle^{2} \wedge \langle x, v\rangle^{2} \big] \leq \min_{v \in \mathbb{S}^{d-1}}  \EE \dotp{x, v}^{2} = \lambda_{\min} (\Sigma).
  $$
  To prove the lower bound in \eqref{eqn:min}, we will show  that for any $u, v\in \mathbb{S}^{d-1}$, we have
  $$ \bigg( 1 - \frac{2}{\pi}\bigg)\langle\Sigma v, v \rangle \leq h_{\Sigma} (u, v).$$
  To this end, first recall
  $\min\{a,b\}=\frac{a+b}{2}-\frac{|a-b|}{2}$
  for any $a,b\in\R$.
  Therefore
  \begin{align*}
    h_{\Sigma}(u,v)&=\mathop{\EE}_{x\sim \mathsf{N}(0,\Sigma)} \bigg[\frac{\langle x,u\rangle^2+\langle x,v\rangle^2}{2}-\frac{|\langle x,u\rangle^2-\langle x,v\rangle^2|}{2}\bigg]\\
                   &=\frac{\langle\Sigma u,u\rangle + \langle\Sigma v,v\rangle}{2}-\frac{\mathop{\EE}_{x\sim \mathsf{N}(0,\Sigma)}|\langle x,u\rangle^2-\langle x,v\rangle^2|}{2}.
  \end{align*}
  Now,
  $$|\langle x,u\rangle^2-\langle x,v\rangle^2|=|\underbrace{\langle x,u+v\rangle}_{=:y_1} \underbrace{\langle x,u-v\rangle}_{=:y_2}|.$$
  Then $\sigma_{1}^{2} := \EE y_{1}^{2} = \|\Sigma^{1/2} (u + v)\|^{2}_2 $ and $\sigma_{2}^{2} := \EE y_{2}^{2} = \|\Sigma^{1/2} (u - v)\|^{2}_2 $, and
  \begin{align*}
    \EE y_1 y_2&=\langle\Sigma(u+v), u-v\rangle =\rho \sigma_{1} \sigma_{2},
  \end{align*}
  where
  $\rho:=\frac{\langle \Sigma^{1/2} (u+v),\Sigma^{1/2} (u-v)\rangle}{\|\Sigma^{1/2}(u+v)\|_2\|\Sigma^{1/2}(u-v)\|_2}$.
  Thus, Lemma~\ref{lem:correlation} implies
  \begin{align*}
    \EE|y_1 y_2|&=\frac{2}{\pi}\bigg(\underbrace{\sqrt{1-\rho^2}+\rho\arcsin(\rho)}_{=:\psi(\rho)}\bigg)\sigma_1\sigma_2.
  \end{align*}
  Therefore, after the relabeling $\hat u := \Sigma^{1/2}u$ and $\hat v := \Sigma^{1/2}v$ we deduce
   $$h_{\Sigma}(u,v)= \frac{\|\hat u\|^2_2+\|\hat v\|^2_2}{2}-\frac{\psi(\rho)}{\pi}\|\hat u+\hat v\|_2\|\hat u-\hat v\|_2.$$
  By Lemma~\ref{lem:simple-ub}, we have
  \begin{equation}\label{eq:simpler-h} h_{\Sigma}(u,v) \geq \frac{\|\hat u\|^2_2+\|\hat v\|^2_2}{2}- \bigg(\bigg( \frac{1}{2} - \frac{1}{\pi} \bigg) \rho^{2} + \frac{1}{\pi}\bigg)\|\hat u+\hat v\|_2\|\hat u-\hat v\|_2.
  \end{equation}
  To complete the proof, we upper-bound the second term on the right-hand side of this inequality. Without loss of generality, we assume $\|\hat v\|_2 \leq \|\hat u\|_2.$ By definition, $\rho$ is equal to $\cos(\alpha)$ where $\alpha$ is the angle between $\hat u + \hat v$ and $\hat u - \hat v.$ Thus, by the law of cosines we have
  \begin{equation} \label{eq:onenice} 2 \rho \|\hat u+\hat v\|_2\|\hat u-\hat v\|_2 = 2 \cos(\alpha) \|\hat u+\hat v\|_2\|\hat u-\hat v\|_2 = \|\hat u+\hat v\|^{2}_2 + \|\hat u-\hat v\|^{2}_2 - 4\|\hat v\|^{2}_2 = 2(\|\hat u\|^{2}_2 - \|\hat v\|^{2}_2),\end{equation}
  where the last equality follows by the parallelogram law. Similarly, using Young's inequality:
  \begin{equation} \label{eq:twonice}
    \|\hat u+\hat v\|_2\|\hat u-\hat v\|_2 \leq  \frac{\|\hat u+\hat v\|^{2}_2 + \|\hat u-\hat v\|^{2}_2}{2} = \|\hat u\|_2^{2}+ \|\hat v\|^{2}_2.
  \end{equation}
  Then applying \eqref{eq:onenice} and \eqref{eq:twonice} yields
  \begin{align*}
    \bigg(\bigg( \frac{1}{2} - \frac{1}{\pi} \bigg) \rho^{2} + \frac{1}{\pi}\bigg)\|\hat u+\hat v\|_2\|\hat u-\hat v\|_2 & \leq \bigg( \frac{1}{2} - \frac{1}{\pi} \bigg) \rho\Big( \|\hat u\|^{2}_2 - \|\hat v\|^{2}_2 \Big) + \frac{ 1}{\pi} \Big(\|\hat u\|^{2}_2 + \|\hat v\|^{2}_2\Big) \\
 	&= \bigg(\frac{1}{\pi} + \frac{\rho}{2} - \frac{\rho}{\pi} \bigg)\|\hat u\|^{2}_2  + \bigg(\frac{ 1}{\pi} - \frac{\rho}{2} + \frac{\rho}{\pi} \bigg) \|\hat v\|^{2}_2 \\
	& \leq \frac{1}{2}\|\hat u\|^{2}_2  + \bigg(\frac{2}{\pi} - \frac{1}{2}\bigg) \|\hat v\|^{2}_2,
  \end{align*}
  where the last inequality follows by adding and subtracting $(2/\pi - 1/2)$ to the coefficient of $\|\hat v\|_2$ and noting $\|\hat v\|_2 \leq \|\hat u\|_2$. Combining this inequality with \eqref{eq:simpler-h} yields
  $$
  h_{\Sigma}(u,v) \geq \bigg( 1 - \frac{2}{\pi}\bigg) \|\hat v\|^{2}_2 =  \bigg( 1 - \frac{2}{\pi}\bigg) \langle\Sigma v, v \rangle \geq \bigg( 1 - \frac{2}{\pi}\bigg)\lambda_{\min}(\Sigma),
  $$
  which proves the lower bound in \eqref{eqn:min}.
  \section{Proofs from Section~\ref{sections:blind-deconvolution}}
  \subsection{Proof of Lemma~\ref{lem:basic_ill_cond}}\label{sec:proof_basicillcond}

	Set $\Sigma_1=\EE_{\mu}x_1x_1^{\top}$ and $\Sigma_2=\EE_{\nu}x_2x_2^{\top}$.
	Let $\cM$ denote the manifold of rank-one $d_1\times d_2$ matrices:
	$$\cM = \big\{M \in\R^{d_1 \times d_2} \mid \rank(A) = 1 \big\}.$$
	A quick computation yields the expression
	$\nabla f(M)= \EE\, \langle M-\Ms,x_1x^\top_2\rangle x_1x_2^\top$.
	In particular, equality $\nabla f(\Ms)=0$ holds and therefore the optimal Lagrange multipliers $\lambda_{\star}$ are zero. Hence the Hessian of the Lagrangian at $(\Ms,\lambda_{\star})$ coincides coincides with $\nabla^2 f(\Ms)$. We now successively compute
	\begin{align}
		\nabla^2 f(\Ms)[\Delta,\Delta] & =\EE\,\langle \Delta, x_1x^\top_2\rangle^2\label{eqn:bloop}            \\
		                               & ={\EE}\big[x^\top_1\Delta (x_2x^\top_2)\Delta^\top x_1\big]                     \\
		                               & ={\EE}_{\mu} \big[x^\top_1\Delta \Sigma_2\Delta^\top x_1\big]                    \\
		                               & ={\EE}_{\mu} \tr(\Delta \Sigma_2\Delta^\top x_1x^\top_1)               \\
		                               & =\tr(\Delta \Sigma_2\Delta^\top \Sigma_1)                            \\
		                               & =\|\Sigma^{1/2}_1\Delta\Sigma^{1/2}_2\|_F^2 \label{eqn:letsgo}       \\
		                               & \geq \lambda_{\min}(\Sigma_1)\lambda_{\min}(\Sigma_2)\|\Delta\|^2_F.
	\end{align}
	In particular, if both $\Sigma_1$ and $\Sigma_2$ are nonsingular, then $\mu\times\nu$ does not lie in $\cE$.
	We now leverage the tangent structure to get a better lower bound on the quadratic form	$\nabla^2 f(\Ms)[\Delta,\Delta]$.
	To this end, standard computations (see, e.g., \cite[Section 7.5]{boumal2023introduction}) show that the tangent space $\cT$ of $\cM$ at $\Ms=\beta_{1\star}\beta_{2\star}^\top$ is given by
	\begin{align}
		\cT &= \big\{ a\beta_{1\star}\beta_{2\star}^{\top} + u\beta_{2\star}^{\top} + \beta_{1\star}v^{\top} \mid a\in\R,\, u\in \beta_{1\star}^{\perp},\, v\in \beta_{2\star}^{\perp} \big\} \\
		&= \big\{ w\beta_{2\star}^{\top} + \beta_{1\star}v^{\top} \mid w\in\R^{d_1},\, v\in \beta_{2\star}^{\perp} \big\}.  
	\end{align}
	Henceforth, let $\Delta=w\beta_{2\star}^{\top} + \beta_{1\star}v^{\top}$ be an arbitrary element of $\cT$, where $w\in\mathbf{R}^{d_1}$ and $v\in \beta_{2\star}^{\perp}$. Observe that we may rewrite \eqref{eqn:letsgo} as
	\begin{align}
		\nabla^2 f(\Ms)[\Delta,\Delta] & = \left\|\Sigma^{1/2}_1\big(w\beta_{2\star}^{\top} + \beta_{1\star}v^{\top}\big)\Sigma^{1/2}_2\right\|_F^2                                                                                                    \\
		                               & = \left\| \big(\Sigma_{2}^{1/2} \otimes \Sigma^{1/2}_1\big){\Vect}\big(w\beta_{2\star}^{\top}\big) + \big(\Sigma^{1/2}_2\otimes \Sigma^{1/2}_1\big) {\Vect}\big(\beta_{1\star}v^{\top}\big) \right\|_2^2.
	\end{align}
	To obtain the desired lower bound in \eqref{eqn:abithard}, we leverage the following general claim.
	\begin{claim}
		Let $A$ be a positive semidefinite linear operator on a Euclidean space $\mathbf{E}$ and let $x, y \in \mathbf{E}$ be any pair of orthogonal vectors. Then the estimate holds:
		\begin{equation}\label{eqn:need_ineqq}
		\|Ax + Ay\|^{2} \geq \frac{2}{\kappa(A)^{2} + 1}\Big(\|Ax\|^{2} + \|Ay\|^{2}\Big).
		\end{equation}
        \revised{This bound is tight: equality holds if we take $x = (u_{1} + u_{n})/\sqrt{2}$ and $y=(u_{n} - u_{1})/\sqrt{2}$ with $u_{1}$ and $u_{n}$ orthonormal top and least eigenvectors of $A,$ respectively.
        }
	\end{claim}
	\begin{proof}
		 The result is trivial if $A$ is singular, so we may assume that $A$ is positive definite. The claim will follow from the inequality
		\begin{equation}\label{eqn:mydream}
			|\dotp{Ax, Ay}| \leq \bigg(1 - \frac{2}{\kappa(A)^{2} + 1 }\bigg)\|Ax\|\|Ay\|.
		\end{equation}
		Before establishing \eqref{eqn:mydream}, let us first show how it implies \eqref{eqn:need_ineqq}. Expanding the square yields
		\begin{align}
			\|Ax + Ay\|^{2} & = \|Ax\|^{2} + 2 \dotp{Ax, Ay} + \|Ay\|^{2}                                                                       \\
			                & \geq \|Ax\|^{2} - 2 |\dotp{Ax, Ay}| + \|Ay\|^{2}                                                                  \\
			                & \geq \|Ax\|^{2} - 2 \bigg(1 - \frac{2}{\kappa(A)^{2} + 1 }\bigg)\|Ax\|\|Ay\| + \|Ay\|^{2}                \label{eqn:sec_ineq_blah}        \\
			                & \geq \|Ax\|^{2} - \bigg(1 - \frac{2}{\kappa(A)^{2} + 1 }\bigg)\Big(\|Ax\|^{2} + \|Ay\|^{2}\Big) + \|Ay\|^{2} \label{eqn:third_ineq_blah} \\
			                & = \frac{2}{\kappa(A)^{2} + 1}\Big(\|Ax\|^{2} + \|Ay\|^{2}\Big),
		\end{align}
		where \eqref{eqn:sec_ineq_blah} follows from \eqref{eqn:mydream} and \eqref{eqn:third_ineq_blah} is an application of Young's inequality. Thus, \eqref{eqn:mydream} implies \eqref{eqn:need_ineqq}, and so it now suffices to establish \eqref{eqn:mydream}, which by the positive definiteness of $A$ is equivalent to
		\begin{equation}\label{eqn:myWielandtdream}
			|\dotp{x, A^2y}|^2 \leq \Bigg(\frac{\lambda_{\max}(A^2) - \lambda_{\min}(A^2)}{\lambda_{\max}(A^2) + \lambda_{\min}(A^2)}\Bigg)^2 \langle x, A^2 x\rangle \langle y, A^2 y \rangle.
		\end{equation}
		Since $A^2$ is positive definite, \eqref{eqn:myWielandtdream} holds as an instance of Wielandt's inequality \cite[Section~7.4.12]{horn2012matrix}. 
		
        \revised{Finally, a straightforward computation reveals that  equality holds in \eqref{eqn:need_ineqq} when $x = (u_{1} + u_{n})/\sqrt{2}$ and $y=(u_{n} - u_{1})/\sqrt{2}$ with $u_{1}$ and $u_{n}$ orthonormal top and least eigenvectors of $A,$ respectively.}
	\end{proof}
	We now instantiate the claim with $A = \Sigma_{2}^{1/2} \otimes \Sigma_{1}^{1/2}$, $x = {\Vect}\big(w\beta_{2\star}^{\top}\big)$, and $y = {\Vect}\big(\beta_{1\star}v^{\top}\big)$. Note that $x$ and $y$ are orthogonal since
	$$
		\dotp{x, y} = \dotp{w\beta_{2\star}^{\top}, \beta_{1\star}v^{\top}} = {\Tr}\big(\beta_{2\star}w^{\top}\beta_{1\star}v^{\top}\big) = {\Tr}\big(w^{\top}\beta_{1\star}v^{\top}\beta_{2\star}\big) = \dotp{w,\beta_{1\star}}\dotp{v,\beta_{2\star}} = 0,
	$$
	where we have leveraged the cyclic invariance of the trace and the equality $\dotp{v, \beta_{2\star}} = 0.$ Further, $\kappa(A)^{2} = \kappa(\Sigma_{1}) \kappa(\Sigma_{2})$ and $\|\Delta\|_{F}^{2} = \|w\|^{2}_2\|\beta_{2\star}\|_2^2 + \|v\|^{2}_2\|\beta_{1\star}\|_2^2$. Thus, altogether we derive
	
	\begin{align}
		\nabla^2 f(\Ms)[\Delta,\Delta] & \geq \frac{2}{\kappa(\Sigma_{1})\kappa(\Sigma_{2}) + 1} \Big(\dotp{\beta_{2\star}, \Sigma_{2}\beta_{2\star}} \dotp{w, \Sigma_{1}w} + \dotp{\beta_{1\star}, \Sigma_{1}\beta_{1\star}} \dotp{v, \Sigma_{2} v} \Big)                            \\
		                               & \geq \frac{2}{\kappa(\Sigma_{1})\kappa(\Sigma_{2}) + 1} \Big(\dotp{\beta_{2\star}, \Sigma_{2}\beta_{2\star}} \lambda_{\min}(\Sigma_{1})\|w\|^{2}_2 + \dotp{\beta_{1\star}, \Sigma_{1}\beta_{1\star}} \lambda_{\min}(\Sigma_{2})\|v\|^{2}_2\Big) \\
		                         & \geq\frac{2\cdot{\min}\{\gamma_2 \lambda_{\min}(\Sigma_{1}), \gamma_1 \lambda_{\min}(\Sigma_{2})\}}{\kappa(\Sigma_{1})\kappa(\Sigma_{2}) + 1}\|\Delta\|_{F}^2,
	\end{align}
	where $\gamma_i:=\left\langle\Sigma_i\frac{\beta_{i\star}}{\|\beta_{i\star}\|_2},\frac{\beta_{i\star}}{\|\beta_{i\star}\|_2}  \right\rangle$ for $i=1,2$. This establishes the lower bound in \eqref{eqn:abithard}.

	Next, we establish the upper bound in \eqref{eqn:abithard}. Observe that from \eqref{eqn:bloop}, we have
	\begin{equation}\label{eqn:blind_deconv}
		\nabla^2 f(\Ms)[\Delta,\Delta]={\EE}\big[\langle x_1, w \rangle \langle x_2, \beta_{2\star} \rangle + \langle x_1, \beta_{1\star} \rangle \langle x_2, v \rangle\big]^2.
	\end{equation}
	If $v=0$ and $w \ne 0$, then \eqref{eqn:blind_deconv} becomes
	\begin{align}
		\nabla^2 f(\Ms)[\Delta,\Delta] = {\EE}\big[\langle x_1, w \rangle \langle x_2, \beta_{2\star} \rangle\big]^2 = \langle\Sigma_{1}w, w \rangle \langle\Sigma_2\beta_{2\star},\beta_{2\star}\rangle = \gamma_{2}\Big\langle\Sigma_{1} \tfrac{w}{\|w\|_2}, \tfrac{w}{\|w\|_2} \Big\rangle\|\Delta\|^2_{F}
	\end{align}
	and hence minimizing over all such $\Delta$ yields
	\begin{equation}
		\lambda_{\min}(\nabla^2 f(\Ms))\leq\gamma_{2}\lambda_{\min}(\Sigma_1).
	\end{equation}
	Similarly, if $w = 0$ and $v \ne 0$, then \eqref{eqn:blind_deconv} becomes
	\begin{align}
		\nabla^2 f(\Ms)[\Delta,\Delta] = {\EE}\big[\langle x_1, \beta_{1\star} \rangle \langle x_2, v \rangle\big]^2 = \langle\Sigma_{1}\beta_{1\star}, \beta_{1\star} \rangle \langle\Sigma_{2}v,v\rangle = \gamma_{1}\Big\langle\Sigma_{2} \tfrac{v}{\|v\|_2}, \tfrac{v}{\|v\|_2} \Big\rangle\|\Delta\|^2_{F}
	\end{align}
	and hence minimizing over all such $\Delta$ yields
	\begin{equation}
		\lambda_{\min}(\nabla^2 f(\Ms))\leq\gamma_{1}\lambda_{\min}(\Sigma_2).
	\end{equation}
Thus, $\lambda_{\min}(\nabla^2 f(\Ms))\leq{\min}\{\gamma_2 \lambda_{\min}(\Sigma_{1}), \gamma_1 \lambda_{\min}(\Sigma_{2})\}$ and the upper bound in \eqref{eqn:abithard} is established. In particular, if either $\Sigma_1$ or $\Sigma_2$ are singular, then $\mu\times\nu$ lies in $\cE$, as claimed. This completes the proof.

	\subsection{Proof of Theorem~\ref{thm:RSE_bilin}}
	
		This follows directly from Lemma~\ref{lem:basic_ill_cond} and Theorem~\ref{prop:Sdist}.

 \section{Proofs from Section~\ref{sec:matrix-completion}}

 \subsection{Proof of Lemma~\ref{lem:blahblah_notneeded}}\label{sec:proof_blah}
  Consider any tangent vector $\Delta=\betas v^{\top} +v \betas^{\top}\in\cT$, where $v\in\R^d$. We may vectorize $\Delta$ as follows:
$$\Vect(\Delta)=\Vect(\betas v^{\top}I)+\Vect(I v \betas^{\top})=(I\otimes\betas)v+(\betas\otimes I)v=\Phi_{\betas}v.$$
 Therefore,  from \eqref{eqn:hessian_rep} we compute 
	\begin{equation}\label{eq:mat-comp-lambda}
	 \nabla^{2} f(M_{\star})[\Delta,\Delta]=\Vect(\Delta)^{\top} \Sigma_{\mu} {\Vect}(\Delta)
		 =   v^{\top} (\Phi_{\beta_{\star}}^{\top} \Sigma_{\mu}\Phi_{\beta_{\star}}) v.
\end{equation}
Assuming $v\ne0$, we recall $\|\Delta\|_F^2/\|\betas\|^2_2 \|v\|^2_2 \in [2,4]$ and hence \eqref{eq:mat-comp-lambda} implies 
	\begin{equation}\label{eq:mat-comp-lambda-2}
	 \frac{\big\langle(\Phi_{\beta_{\star}}^{\top} \Sigma_{\mu}\Phi_{\beta_{\star}}) v, v \big\rangle}{4\|\betas\|_2^2\|v\|_2^2} \leq \frac{\nabla^{2} f(M_{\star})[\Delta,\Delta]}{\|\Delta\|_{F}^2} \leq  \frac{\big\langle(\Phi_{\beta_{\star}}^{\top} \Sigma_{\mu}\Phi_{\beta_{\star}}) v, v \big\rangle}{2\|\betas\|_2^2\|v\|_2^2}.
\end{equation}
Minimizing the ratios in \eqref{eq:mat-comp-lambda-2} over all $v\in\R^d\setminus\{0\}$ yields the estimate \eqref{eqn:reg_modulus}.

\subsection{Proof of Theorem~\ref{thm:rse_MC}}\label{sec:proof_rsemc}

 \textbf{Characterization of well-posedness.}
	To simplify notation, let us relabel $\betas$ to $\beta$, $G_{\supp(P)}$ to $G= (\lbr d \rbr, E)$, $G_{\supp(P)}^{*}$ to $G^{*}= (V^{*}, E^{*})$, and $V_{\supp(P)}^{0}$ to $V^{0}$. Consider any tangent vector $\Delta=\beta v^{\top} +v \beta^{\top}\in\cT$, where $v\in\R^d$. Observe that \eqref{eqn:hessian_rep} yields
		\begin{equation}\label{eqn:expansion_hessian}
	\nabla^2 f(M_{\star})[\Delta,\Delta]=\EE\langle X, \Delta \rangle^2=\sum_{i,j:\, p_{ij}>0} (v_i\beta_j+v_j\beta_i)^2 p_{ij}.
	\end{equation}

	\noindent {$(\!\implies\!)$} We prove the contrapositive. Thus, suppose $G$ does not satisfy Assumption~\ref{eq:mc-combinatorial-char}. We consider two cases separately.\vspace{.2cm}
	
	\noindent\textit{Case 1}. Suppose $G^{*}$ has a bipartite connected component $G_{\ell}^{*} = (V^{*}_{\ell}, E^{*}_{\ell})$, so that there exists a disjoint decomposition $V^{*}_{\ell} = I \sqcup J$ of $V^{*}_{\ell}\subset\supp(\beta)$ such that every edge in $E^{*}_{\ell}$ is of the form $\{i,j\}$ for some $(i,j) \in I\times J$. Then, taking $v\in\R^d$ to be the nonzero vector given by
	$$
		v_{i} = \begin{cases}
			\beta_{i}   & \text{if } i \in I, \\
			- \beta_{i} & \text{if } i \in J,  \\
			0           & \text{otherwise,}
		\end{cases}
	$$
	we see that the tangent vector $\Delta=\beta v^{\top} +v \beta^{\top}$ is nonzero and \eqref{eqn:expansion_hessian} implies
	\begin{equation} \label{eq:in-the-kernel}
		\nabla^2 f(M_{\star})[\Delta,\Delta] = \sum_{i, j:\,\{i,j\} \in E^{*}_{\ell}} (v_{i}\beta_{j} + v_{j}\beta_{i})^{2} p_{ij} =\sum_{i, j:\,\{i,j\}\in E^{*}_{\ell}} (\beta_{i}\beta_{j} - \beta_{j}\beta_{i})^{2} p_{ij} = 0,
	\end{equation}
	where the first equality holds since $v$ is supported on $V^{*}_{\ell}$. Thus, $\mu_P \in \cE^{{\rm mc}}$, as claimed.\vspace{.2cm}
	
	\noindent\textit{Case 2}. Suppose there exists $k \in V^{0}$. Then $\beta_{k} = 0$, and for any $j \in \lbr d \rbr$ such that $p_{kj}>0$, we have $\beta_j=0$.
Taking $v = e_{k}$ in \eqref{eqn:expansion_hessian}, we deduce
$$\nabla^2 f(M_{\star})[\Delta,\Delta] = \sum_{i, j:\,p_{ij}>0} (v_i\beta_j+v_j\beta_i)^2 p_{ij}=0$$
since each summand vanishes. Therefore $\mu_{P}\in\cE^{{\rm mc}}$, as claimed.\vspace{.2cm}

	\noindent {$(\!\impliedby\!)$} Suppose $G$ satisfies Assumption~\ref{eq:mc-combinatorial-char}. To show that $\nabla^2 f(M_{\star})$ is nonsingular, suppose that the tangent vector $\Delta=\beta v^{\top} +v \beta^{\top}$ satisfies $\nabla^2 f(M_{\star})[\Delta,\Delta]=0$; our goal is to show $\Delta = 0$, or equivalently, $v=0$. To this end, note first that by \eqref{eqn:expansion_hessian}, the equality $\nabla^2 f(M_{\star})[\Delta,\Delta]=0$ implies
	\begin{equation}\label{eqn:propogation_kernel}
	v_i\beta_j+v_j\beta_i=0 ~~\text{whenever}~~ \{i,j\}\in E.
	\end{equation}
	Now let $G_{\ell}^{*} = (V^{*}_{\ell}, E^{*}_{\ell})$ be any connected component of $G^{*}$. Then $G^{*}_{\ell}$ is non-bipartite, so it must contain an odd-sized cycle $i_{1} \rightarrow i_{2} \rightarrow \dots \rightarrow i_{k} \rightarrow i_{1}$. If $k=1$, then $\{i_1\}\in E$ and hence \eqref{eqn:propogation_kernel} implies $2 \beta_{i_{1}} v_{i_{1}} =0$. Similarly, if $k\geq 3$, then we have the expansion
	$$
		2 \beta_{i_{1}} v_{i_{k}} = \big( \beta_{i_{1}} v_{i_{k}} + \beta_{i_{k}} v_{i_{1}}\big) - \frac{\beta_{i_{k}}}{\beta_{i_{2}}} \big( \beta_{i_{2}} v_{i_{1}} + \beta_{i_{1}} v_{i_{2}} \big) + \sum_{j = 2}^{k - 1} (-1)^{j} \frac{\beta_{i_{1}} \beta_{i_{k}}}{\beta_{i_{j}} \beta_{i_{j+1}}} \big( \beta_{i_{j+1}} v_{i_{j}} + \beta_{i_{j}} v_{i_{j+1}} \big) = 0,
	$$
	where the last equality follows since each term in parentheses is zero by \eqref{eqn:propogation_kernel}. Since $\beta_{i_{1}} \ne 0$, we deduce $v_{i_{k}} = 0$. Next, observe from \eqref{eqn:propogation_kernel} that for any vertex $j$ satisfying $\{i_{k}, j\}\in E$, we have $ v_{j} = -(\beta_{j}/\beta_{i_{k}})v_{i_{k}} = 0$; by induction along any path in $G^{*}_{\ell}$ terminating at $i_k$, we deduce $v_{j} = 0$ for all $j \in V^{*}_{\ell}$ by the connectivity of $G^{*}_{\ell}$, and since this holds for every connected component $G^{*}_{\ell}$ of $G^{*}$, we conclude $v_{j} = 0$ for all $j \in V^{*}$. Finally, consider any vertex $i \in \lbr d \rbr\setminus V^{*}$. Then since $V^{0}=\emptyset$, there exists some $j\in V^*$ such that $\{i,j\}\in E$. Using \eqref{eqn:propogation_kernel} again, we conclude $v_i\beta_j=0$; taking into account $\beta_j \ne 0$, we deduce $v_i = 0$. Therefore $v=0$ and we have established $\mu_{P} \notin \cE^{{\rm mc}}$.

	\paragraph{Distance formula.} We have now proved that for any $Q\in\cQ$, we have $\mu_{Q} \in \cE^{{\rm mc}}$ if and only if $\supp(Q)\in \Omega_{\betas}$. 
	Given any subset $A \subset \lbr d \rbr \times \lbr d \rbr$, let us define $\sP_{A}\colon \R^{d \times d} \rightarrow \R^{d \times d}$ to be the orthogonal projection onto the entries indexed by $A$ (i.e., the orthogonal projection onto $\Span\{e_{i}e_{j}^{\top} \mid (i,j)\in A\}$). Now consider any $A \in \Omega_{\betas}$ such that $A\subset \supp(P)$. Clearly, the pushforward measure $(\sP_{A})_{\#}\mu_P$ is equal to $\mu_Q$ where $Q := \sP_{A}P \in\cQ$ satisfies $\supp(Q) = A\in\Omega_{\betas}$. Thus, $(\sP_{A})_{\#}\mu_P\in\cE^{\text{mc}}$ and hence
	\begin{align*}
		{\rm RSE}(\mu_{P})^{2} = \min_{\nu \in \cE^{\text{mc}}} W_{2}^{2} (\mu_P, \nu)& \leq W_{2}^{2}\big(\mu_P, (\sP_{A})_{\#}\mu_P\big)= \sum_{(i,j)\in \supp(\bP) \setminus A} p_{ij},
	\end{align*}
	where the last equality follows from Lemma~\ref{lem:w2_proj}. Taking the minimum over all $A\in\Omega_{\betas}$ such that $A\subset\supp(P)$ yields the inequality $\leq$ in \eqref{eq:mat-comp-hard-dist}.

	Conversely, consider an arbitrary measure $\nu$ in $\cE^{\text{mc}}$. Then $\nu = \mu_Q$ for some $Q\in\cQ$ such that $\supp(Q)\in \Omega_{\betas}$. Now set $B:=\supp(P)$ and $A' := \supp(Q)\cap B$; since $A'$ is a symmetric subset of $\supp(Q)\in \Omega_{\betas}$, it follows readily that $A'\in \Omega_{\betas}$. Moreover, clearly $A'\subset B$ and the support of $(\sP_B)_{\#}\nu$ is contained in $\range(\sP_{A'})$, so \eqref{eqn:reverse_ineq} and Lemma~\ref{lem:w2_proj} imply
	\begin{align*}
		W_{2}^{2}(\mu_P, \nu) \geq W_{2}^{2}(\mu_P, (\sP_B)_{\#}\nu) \geq W_{2}^{2}(\mu_P, (\sP_{A'})_{\#} \mu_P)
		= \sum_{(i,j) \in B \setminus A'} p_{ij} \geq \min_{\substack{A \in \Omega_{\betas}\\{A\subset B}}}\sum_{(i,j)\in B \setminus A} p_{ij},
	\end{align*}
	where the first and last inequalities hold trivially. Since $\nu \in \cE^{\text{mc}}$ is arbitrary, this completes the proof of the reverse inequality $\geq$ in \eqref{eq:mat-comp-hard-dist}. Further, this argument shows that the $W_2$ distance from $\mu_P$ to $\cE^{\text{mc}}$ is attained by the measure $(\sP_{A^*})_{\#}\mu_P$ for any $A^*\subset\supp(P)$ in $\Omega_{\betas}$ minimizing \eqref{eq:mat-comp-hard-dist}.   

	\paragraph{Hardness.} Suppose that we have an algorithm \texttt{Alg} that outputs ${\rm RSE}(\mu_{P})^2$ for any given matrix completion data $(P,M_{\star})$, and let \texttt{MaxCut} be an algorithm that outputs the maximum cut size of any given undirected graph. We will construct a polynomial-time Turing reduction from \texttt{MaxCut} to \texttt{Alg}. To this end, let $G = (V, E)$ be an arbitrary undirected graph with connected components $G_{1} = (V_{1}, E_{1}), \dots, G_{k} = (V_{k}, E_{k})$. For each $\ell = 1,\ldots, k$, let $d_\ell = |V_\ell|$, $V_{\ell} = \{n_{\ell}(1), \ldots, n_{\ell}(d_\ell)\}$, and consider the instance of matrix completion data $(P_\ell,M_{\star\ell})$ where $P_\ell\in\bS^{d_\ell}$ is given by   
		$$
		\big(P_{\ell}\big)_{ij} = \begin{cases}
			\frac{1}{2|E_{\ell}|}   & \text{if } i \ne j \text{ and } \{n_{\ell}(i),n_{\ell}(j)\} \in E_{\ell}, \\
			\frac{1}{|E_{\ell}|}   & \text{if } i = j \text{ and } \{n_{\ell}(i)\} \in E_{\ell}, \\
			0           & \text{otherwise}
		\end{cases}
		$$
		and $M_{\star\ell} = \beta_{\star\ell} \beta_{\star\ell}^\top$ where $\beta_{\star\ell}$  is any vector in $\RR^{d_{\ell}}$ such that $\supp(\beta_{\star\ell}) = \lbr d_{\ell} \rbr$, e.g., $\beta_{\star\ell}^\top = (1,\ldots,1)$. Defining $B_{\ell}:=\supp(P_{\ell})$, we clearly have $G_{\ell} \cong G_{B_{\ell}}$, $G_{B_\ell}^{*} = G_{B_\ell}$, and $V_{B_\ell}^0 = \emptyset$. 
		It follows from \eqref{eq:mat-comp-hard-dist} and the connectivity of $G_\ell$ that the quantity  
		$$
		|E_{\ell}| \cdot \texttt{Alg}(\bP_{\ell}, M_{\star\ell}) = \min_{\substack{A \in \Omega_{\beta_{\star\ell}}\\{A\subset\supp(P_\ell)}}}\sum_{\substack{i\leq j \\ (i,j)\in \supp(\bP_{\ell}) \setminus A}} \mathbf{1}_{E_{\ell}}\big(\{n_{\ell}(i),n_{\ell}(j)\}\big)
		$$
		is equal to the minimum number of edges one needs to remove from $G_{\ell}$ so that the remaining subgraph is bipartite. Thus, $|E_{\ell}|\big(1 - \texttt{Alg}(\bP_{\ell}, M_{\star\ell})\big)$ is equal to the number of edges of a maximum bipartite subgraph of $G_\ell$, which is equal to $\texttt{MaxCut}(G_\ell).$ Therefore summing over the connected components of $G$ yields
	$$ \texttt{MaxCut}(G) = |E| - \sum_{\ell =1}^{k}|E_{\ell}|\cdot \texttt{Alg}(\bP_{\ell}, M_{\star\ell}),
	$$
	thereby providing a polynomial-time Turing reduction from the problem of computing \texttt{MaxCut}($G$) for any given undirected graph $G$ to the problem of computing ${\rm RSE}(\mu_{P})^2$ for any given matrix completion data $(P,M_{\star})$. Since the former problem is NP-hard in general, so is the latter.

\end{document}